\newtheorem{theorem}{Theorem}[section]
\newtheorem{proposition}[theorem]{Proposition}
\newtheorem{corollary}[theorem]{Corollary}
\newtheorem{lemma}[theorem]{Lemma}
\newtheorem{definition}[theorem]{Definition}
\newtheorem{remark}[theorem]{Remark}
\newtheorem{example}{Example}
\newtheorem{notation}{Notation}
\newcommand{\Hom}{\mathrm{Hom}}
\newcommand{\mCl}{\mathsf{mCl}}
\newcommand{\Proj}{\mathrm{Proj}}
\newcommand{\Gr}{\mathrm{Gr}}
\newcommand{\ring}{\mathbb{C}[\mathrm{Gr}_{m,n}]}
\newcommand{\iring}{\mathbb{C}[\mathrm{Gr}]}
\newcommand{\cA}{\mathcal{A}}
\newcommand{\cC}{\mathcal{C}}
\newcommand{\cF}{\mathcal{F}}
\newcommand{\bC}{\mathbb{C}}
\newcommand{\bE}{\mathbb{E}}
\newcommand{\bN}{\mathbb{N}}
\newcommand{\bQ}{\mathbb{Q}}
\newcommand{\bX}{\mathbb{X}}
\newcommand{\bZ}{\mathbb{Z}}
\newcommand{\X}{\mathrm{X}}
\newcommand{\ex}{\mathrm{ex}}
\newcommand{\colim@}[2]{%
  \vtop{\m@th\ialign{##\cr
    \hfil$#1\operator@font colim$\hfil\cr
    \noalign{\nointerlineskip\kern1.5\ex@}#2\cr
    \noalign{\nointerlineskip\kern-\ex@}\cr}}%
}
\newcommand{\alim@}[2]{%
  \vtop{\m@th\ialign{##\cr
    \hfil$#1\operator@font lim$\hfil\cr
    \noalign{\nointerlineskip\kern1.5\ex@}#2\cr
    \noalign{\nointerlineskip\kern-\ex@}\cr}}%
}
\newcommand{\colim}{%
  \mathop{\mathpalette\colim@{\rightarrowfill@\scriptscriptstyle}}\nmlimits@
}
\renewcommand{\varprojlim}{%
  \mathop{\mathpalette\varlim@{\leftarrowfill@\scriptscriptstyle}}\nmlimits@
}
\renewcommand{\varinjlim}{%
  \mathop{\mathpalette\varlim@{\rightarrowfill@\scriptscriptstyle}}\nmlimits@
}
\newcommand{\alim}{%
  \mathop{\mathpalette\alim@{\leftarrowfill@\scriptscriptstyle}}\nmlimits@
}
\numberwithin{theorem}{section}
\numberwithin{equation}{section}
\numberwithin{figure}{section}
\numberwithin{example}{section}
\begin{document}

\title[Ind-cluster algebras and infinite Grassmannians]{Ind-cluster algebras and infinite Grassmannians}
\author{Sira Gratz}
\address{
Sira Gratz, Department of Mathematics,
Aarhus University,
Ny Munkegade 118,
8000 Aarhus C, Denmark
}
\email{Sira@math.au.dk}
\urladdr{https://sites.google.com/view/siragratz}
\author{Christian Korff}
\address{
Christian Korff, School of Mathematics and Statistics,
University of Glasgow,
Glasgow G12 8QQ, United Kingdom
}
\email{Christian.Korff@glasgow.ac.uk}
\urladdr{https://sites.google.com/view/christiankorff}

\subjclass[2020]{Primary: 13F60; Secondary: 14M15, 37K10, 05E05}
\keywords{Cluster algebras, Sato Grassmannian, KP hierarchy}

\begin{abstract}
A prototypical examples of a cluster algebra is the coordinate ring of a finite Grassmannian: using the Pl\"ucker embedding the cluster algebra structure allows one to move between `maximal sets' of algebraically independent Pl\"ucker coordinates via mutations. Fioresi and Hacon studied a specific colimit of the coordinate rings of finite Grassmannians and its link with the infinite Grassmannian introduced by Sato and independently by Segal and Wilson in connection with the Kadomtsev-Petiashvili (KP) hierarchy, an infinite set of nonlinear partial differential equations which possess soliton solutions. In this article we prove that this ring is a cluster algebra of infinite rank with the structure induced by the colimit construction. More generally, we prove that cluster algebras of infinite rank are precisely the ind-objects of a natural category of cluster algebras.
\end{abstract}

\maketitle

\tableofcontents

\section{Introduction}

Infinite-dimensional Grassmann manifolds were introduced by Sato in \cite{sato1981soliton,sato1983soliton} and by Segal, Wilson in \cite{segal1985loop} as well as by Pressley, Segal in \cite{pressley1985loop}. The constructions in \cite{sato1981soliton,sato1983soliton} and \cite{segal1985loop} are motivated by the observation that their points are in one-to-one correspondence with solutions of an integrable hierarchy of non-linear partial differential equations (PDEs), the Kadomtsev-Petiashvili (KP) hierarchy, which describes shallow water waves and possesses soliton solutions. The KP hierarchy plays a central role in the Riemann-Schottky problem and the proof of the Novikov conjecture \cite{shiota1986characterization}. It connects the area of integrable systems with algebraic geometry \cite{krichever2011soliton} but it is also of interest in representation theory of loop groups \cite{pressley1985loop} and in connection with the boson-fermion correspondence as well as vertex operator algebras and symmetric functions, see e.g. \cite{miwa2000solitons,kac2013bombay} as well as references therein. 

Key to understanding the link between the KP-hierarchy and the underlying algebraic and geometric structure of infinite dimensional Grassmannians is the notion of a $\tau$-function introduced by the Kyoto School in the 1990s, which allows one to formulate the PDEs in an algebraic form, known as Hirota's bilinear identity, that is equivalent to the union of the Pl\"ucker relations of all finite Grassmannians; see e.g. the textbook \cite{miwa2000solitons} and references therein. The solutions of the KP-hierarchy can therefore be identified with points on an infinite  Grassmannian (defined independently in \cite{sato1981soliton,sato1983soliton} and \cite{segal1985loop}),  which is sometimes in the literature simply called the `Sato Grassmannian' although the constructions in these sources differ in some technical aspects; we refer the reader to \cite[\S10]{segal1985loop} and \cite{pressley1985loop} for details.

For the sake of concreteness we recall the definition of the infinite Grassmannian $\Gr(H)$ for a given separable Hilbert space $H$ from \cite[\S2,\S10]{segal1985loop} and \cite[Def.7.1]{pressley1985loop}: let $H=L^2(S^1)$ be the Hilbert space of square integrable functions $f:S^1\to\bC$ on the unit circle $S^1\subset\bC$, which we can write as a Fourier series in the complex variable $z\in S^1$ as
\[
f(z)=\sum_{n\in\bZ} f_n z^{n+1}\;.
\]
We call $f\in H$ of {\em finite order} $N$ if it is of the form $f(z)=\sum_{N\le n}f_nz^{n+1}$ with $f_{N}\neq 0$. That is, we can think of the subspace $H^{\rm fin}$ of finite order elements as the space $V=\bC(\!(z)\!)$ of formal Laurent series in the variable $z$ and the latter is dense in $H$. Consider the decomposition\footnote{We have made slight modifications in the conventions when defining the subspaces $H_\varnothing$ and $H_0$ compared to \cite{segal1985loop} in order to more easily match the discussion of the coordinate rings of Grassmannians in \cite{FH-Sato}.} $H=H_{\varnothing}\oplus H_{0}$, where $H_0\cong z\bC[\![z]\!]$ is the subspace of order zero elements and $H_\varnothing$ the subspace of elements $f$ which do not contain any positive powers of $z$, i.e. $f$ is of the form $f(z)=f_0+f_{-1}z^{-1}+f_{-2}z^{-2}+\cdots$. 
\begin{definition}[Sato \cite{sato1981soliton,sato1983soliton}, Segal-Wilson \cite{segal1985loop}, Pressley-Segal \cite{pressley1985loop}]\label{def:Gr}
    Denote by $\Gr(H)$ the set of all closed subspaces $W\subset H$ such that (i) the orthogonal projection $\mathrm{pr}:W\to H_\varnothing$ has finite-dimensional kernel and cokernel and (ii) the orthogonal projection $\mathrm{pr}:W\to H_0$ is a Hilbert-Schmidt operator.
\end{definition}
As explained in \cite{segal1985loop} the infinite Grassmannian $\Gr(H)$ is not connected and two subspaces belong to the same connected component if and only if they have the same {\em virtual dimension}, where the latter is defined as the difference of the dimensions of the kernel and cokernel of the projection onto $H_\varnothing$. In what follows we shall only consider the connected component of virtual dimension zero, i.e. the set of subspaces $W\in\Gr(H)$ for which $\dim(\ker\mathrm{pr})=\dim(\mathrm{coker}\,\mathrm{pr})$ with $\mathrm{pr}:W\to H_\varnothing$ being the orthogonal projection from (i) in Definition \ref{def:Gr}. Up to certain technical differences, it is this connected component which has been called the `Universal Grassmann Manifold' in \cite{sato1983soliton} and is simply denoted by $\Gr$ in \cite{segal1985loop}. We shall adopt the latter notation and refer to it as the Sato-Segal-Wilson Grassmannian.

\begin{remark}\rm
Although we have included the definition of the infinite Grassmannian $\Gr$ here for completeness, our main focus will be its coordinate ring $\iring$ instead, which is defined independently as a colimit $\iring = \colim \ring$ of the coordinate rings $\ring$ of finite Grassmannians $\Gr_{m,n}=\Gr(m;\bC^{m+n})$ as discussed in \cite{FH-Sato}; compare also with the definition of $\widetilde{\mathrm{GM}}$ in \cite[pp261]{sato1983soliton}.
\end{remark}

\subsection{Pl\"ucker relations and cluster algebras}
There is a one-to-one correspondence between the Pl\"ucker relations defining the coordinate ring $\iring$ and the PDEs making up the KP hierarchy \cite{sato1983soliton}. However, as is already known from the finite case, the set of Pl\"ucker coordinates is algebraically dependent. This is where cluster algebras come in: their clusters consist of maximal sets of algebraically independent Pl\"ucker coordinates from which, by a purely combinatorial procedure called mutation, all other Pl\"ucker coordinates can be obtained. For the coordinate rings $\ring$ of finite Grassmannians $\Gr_{m,n}$ the cluster algebra structure was worked out by Scott \cite{Scott}. In particular, Scott's work describes clusters of Pl\"ucker coordinates and their mutations in explicit combinatorial terms using Postnikov diagrams.

    To capture the algebraic-combinatorial structure of the coordinate ring $\iring$ (as introduced in \cite{sato1981soliton,sato1983soliton} and studied by Fioresi and Hacon in \cite{FH-Sato}), finite-rank cluster algebras will not do. In this article the focus is instead on infinite rank cluster algebras whose study was initiated in \cite{GG1}  inspired by work on infinite rank cluster categories \cite{HJ-cat}, and has since been extended and applied in the context of combinatorial models \cite{BG}, Teichm\"uller theory and cluster algebras from infinitely marked surfaces \cite{CF19}, quantum affine algebras \cite{HL-KR-modules}, \cite{HL-Borel}, \cite{GHL-simplylaced} as well as from a representation theoretic perspective notably with the introduction of discrete cluster catgories of type $A$ in \cite{ITcyclic}, as well as their combinatorial \cite{PY21} and categorical \cite{CG24} completions, and Grassmannian cluster categories of infinite rank in \cite{ACFGS}, inspired by the construction from \cite{JKS16}. These categories have been studied from a cluster combinatorial perspective for example in \cite{BKM16}, \cite{GHJ}, \cite{PY21}, \cite{ACFGSII} and \cite{CKP-completed}.

\subsection{Main result: Ind-Cluster Algebras}
In this paper, we tackle infinite rank versions of cluster algebras by studying ind-objects in a category of finite rank cluster algebras. The approach to study infinite rank cluster algebras as ind-objects was first employed in \cite{Gcolimits}, where it was shown that every cluster algebra of infinite rank arises as a colimit of cluster algebras of finite rank in the category $\mCl$ of {\em inducible melting cluster morphisms}, introduced, with slightly different conventions, in \cite{ADSClus}. Its objects are rooted cluster algebras, that is, cluster algebras with a fixed initial seed, with clusters of arbitrary cardinality, and its maps are morphisms which commute with mutation starting at the initial cluster. The freedom we have is that we are allowed sending cluster variables to integers (``specialising''), as well as sending frozen cluster variables to exchangeable cluster variables (``melting''), but not vice versa. We provide a converse to \cite[Theorem~1.1]{Gcolimits}.

\begin{theorem}[Theorem~\ref{T:filteredclosed}]\label{T:ind-objects}
	The category $\mCl$ of inducible melting cluster morphisms is closed under filtered colimits.
\end{theorem}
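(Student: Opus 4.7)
The plan is to show that any filtered diagram $(A_i, \varphi_{ij})_{i \in I}$ in $\mCl$ admits a colimit inside $\mCl$, by building it explicitly from the initial seeds and then checking that the construction satisfies the universal property. First I would construct a candidate rooted cluster algebra $A_\infty$ as follows: its initial cluster $\mathbf{x}_\infty$ is the colimit, taken in the category of sets (or, more precisely, pointed sets with a distinguished ``integer'' sink absorbing specialised variables), of the initial clusters $\mathbf{x}_i$ under the underlying set maps induced by the $\varphi_{ij}$. The partition into exchangeable and frozen variables is determined by the rule that $x \in \mathbf{x}_\infty$ is exchangeable if and only if it has a representative $x_i \in \mathbf{x}_i$ which is either exchangeable in $A_i$ or gets melted to an exchangeable variable somewhere along the diagram; otherwise it is frozen. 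The exchange matrix entry $B_\infty(x,y)$ is defined by choosing any $i$ large enough for both $x$ and $y$ to be represented in $A_i$ and setting $B_\infty(x,y) = B_i(x_i, y_i)$. Filteredness together with the fact that cluster morphisms in $\mCl$ respect the exchange matrix on the overlap of their sources guarantees that this value is independent of the chosen index.

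The second step is to promote $(\mathbf{x}_\infty, B_\infty)$ into a rooted cluster algebra. The crucial point is that mutation at any $x \in \mathbf{x}_\infty$ is well-defined: I would pick an index $i$ where $x$ and all its exchange-matrix neighbours are already represented, perform the mutation in $A_i$, and transport it forward. Because mutation in $\mCl$ commutes with the transition morphisms along the filtered system, the resulting seed is independent of the choice and, inductively, any finite sequence of mutations from the initial seed lifts consistently from a finite stage. This yields the full set of cluster variables and exchange matrices of $A_\infty$, and hence an ambient ring in which $A_\infty$ is the subalgebra generated by all cluster variables. The canonical maps $\varphi_{i,\infty}\colon A_i \to A_\infty$ then exist at the level of initial seeds by construction, and I would check that each one is an inducible melting cluster morphism: specialisation and melting are the only transformations performed, and inducibility is preserved because any cluster variable in the image is hit from some finite stage where the map was already inducible.

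The final step is the universal property. Given compatible morphisms $\psi_i\colon A_i \to B$ in $\mCl$, I would define $\psi\colon A_\infty \to B$ on the initial seed by setting $\psi(x) = \psi_i(x_i)$ for any representative $x_i$ of $x$, which is unambiguous by the compatibility $\psi_j \circ \varphi_{ij} = \psi_i$. Extending this to all cluster variables proceeds via the commutation-with-mutation axiom one stage at a time, and inducibility/melting conditions are inherited from the factors $\psi_i$.

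The main obstacle I expect is bookkeeping the interaction between \emph{melting} and the exchangeable/frozen dichotomy in the colimit. In particular, one must ensure that the rule defining which variables of $\mathbf{x}_\infty$ are exchangeable is consistent with the exchange matrix stabilising along the system, that no variable gets simultaneously specialised and melted along two different cofinal paths, and that the resulting mutation class genuinely agrees with the iterated mutation classes of the $A_i$. All of this should reduce to filteredness plus the fact that, by definition, morphisms in $\mCl$ only ever specialise or melt, never the reverse; but making this precise --- especially writing down and verifying the exchange relation for a mutation in $A_\infty$ at a variable whose frozen-to-exchangeable promotion only occurs arbitrarily late in the diagram --- is where the bulk of the technical work will lie.
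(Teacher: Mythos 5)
Your overall strategy coincides with the paper's: form the colimit of the initial clusters in the category of sets (with $\bZ$ as an absorbing sink for specialised variables), declare a class exchangeable as soon as some representative is exchangeable, transport the exchange matrix and mutations from a sufficiently late finite stage, and verify the universal property by the same stabilisation argument. The paper additionally reduces filtered to directed colimits via a general categorical fact before running this construction, but that is a cosmetic difference.

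There is, however, a genuine gap in your definition of the exchange matrix, and the justification you give for its well-definedness is false. You set $B_\infty(x,y) = B_i(x_i,y_i)$ for ``any $i$ large enough for both $x$ and $y$ to be represented,'' claiming that morphisms in $\mCl$ ``respect the exchange matrix on the overlap of their sources.'' They do not: a melting cluster morphism $f$ may identify two distinct \emph{frozen} variables, and the correct compatibility (Lemma~\ref{L:ws}\,(\ref{ws2}) in the paper) reads $b'_{f(v)w'} = \sigma_\alpha \sum_{f(w)=w'} b_{vw}$, a signed \emph{sum over the fibre} of the map on clusters, with a sign $\sigma_\alpha$ that is only constant on each exchangeably connected component. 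Consequently the entry $b^k_{x_k y_k}$ can strictly grow in absolute value as $k$ increases (whenever new preimages of a frozen neighbour get glued together later in the diagram) and can flip sign. The paper's Lemma~\ref{L:well-defined-seed} repairs this with two ingredients you are missing: a boundedness argument (the entries $|b^k_{x_ky_k}|$ are dominated by the row sum $\sum_z |b^m_{x_m z}|$ at any fixed earlier stage, hence eventually stabilise at their maximum, at an index $\ell$ that depends on $x$ and $y$ and is in general strictly later than the first stage where both are represented), and a choice of sign-determining entry per exchangeably connected component to pin down consistent signs (Definition~\ref{D:ind-seed}). A related omission propagates into your mutation step: when you transport a mutation from stage $i$, the exchange relation at stage $i$ involves neighbours that are later specialised to integers, and one needs that the product of those specialised values is exactly $1$ (Lemma~\ref{L:specialisation of neighbours}) for the exchange relation at the finite stage to map onto the exchange relation in $A_\infty$; this does not follow from filteredness alone. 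You correctly flag that the bulk of the work lies in the exchange relation at late-melted variables, but the specific mechanism that makes the construction break down naively --- fibrewise summation over identified frozen variables and componentwise sign ambiguity --- is not addressed, and the well-definedness claim as stated would fail.
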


We show that the compact objects of $\mCl$ are precisely the rooted cluster algebras of finite rank. In particular, if we stick with the common convention that stipulates that a cluster algebra has initial cluster of finite cardinality, it makes sense to initially consider the full category $\mCl^{\mathrm{f}}$ of $\mCl$ whose objects are finite rank rooted cluster algebras. We then obtain

\begin{corollary}
    The rooted cluster algebras of infinite rank are the ind-objects of $\mCl^{\mathrm{f}}$.
\end{corollary}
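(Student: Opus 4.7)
The plan is to deduce the corollary from the abstract characterisation of ind-completions. Recall that, for any (essentially small) category $\cC$, a fully faithful functor $\cC\hookrightarrow\cD$ identifies $\cD$ with $\mathrm{Ind}(\cC)$ as soon as (a) $\cD$ admits all small filtered colimits, (b) every object of $\cC$ is compact in $\cD$, and (c) every object of $\cD$ is a filtered colimit of a diagram in $\cC$. I apply this to the inclusion $\mCl^{\mathrm{f}}\hookrightarrow\mCl$, which is fully faithful as $\mCl^{\mathrm{f}}$ is defined to be a full subcategory.

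Condition (a) is exactly Theorem~\ref{T:ind-objects}. Condition (c) follows from \cite[Theorem~1.1]{Gcolimits}, which realises any rooted cluster algebra of infinite rank as a (filtered) colimit in $\mCl$ of its finite-rank subobjects; for rooted cluster algebras of finite rank there is nothing to do, since they are trivially the colimit of the constant diagram on themselves. The substantive remaining task is condition (b): to prove that every finite-rank rooted cluster algebra $A\in\mCl^{\mathrm{f}}$ is compact, i.e.\ that $\Hom_{\mCl}(A,-)$ commutes with filtered colimits.

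For compactness, let $(B_i,\varphi_{ij})_{i\in I}$ be a filtered diagram in $\mCl$ and write $(B,\varphi_i)$ for its colimit, constructed via the explicit procedure extracted from the proof of Theorem~\ref{T:filteredclosed}. Fix a morphism $f\colon A\to B$ in $\mCl$; since the initial seed $\Sigma_A=(\ol{\mathbf x},\ol B)$ of $A$ has finite cluster $\ol{\mathbf x}=\{x_1,\dots,x_n\}$, and since an inducible melting cluster morphism is determined by its values on the initial cluster variables together with the combinatorial data of which variables are specialised and which frozen variables are melted, the morphism $f$ is encoded by the finite tuple $(f(x_1),\dots,f(x_n))$ in $B$. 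Each $f(x_k)$ is represented by some element of some $B_{i_k}$; by filteredness of $I$ we can pass to a common index $i\in I$ and lift $f(x_k)=\varphi_i(y_k)$ for elements $y_k\in B_i$, and similarly transport the discrete data (which $x_k$ are specialised, which frozen generators of $A$ are melted) into $B_i$. This produces a candidate morphism $f_i\colon A\to B_i$ with $\varphi_i\circ f_i=f$; any two such factorisations become equal after post-composing with some $\varphi_{ij}$, by a routine filtered-colimit argument on finitely many generators.

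The main obstacle is showing that the lift $f_i\colon A\to B_i$ is itself a morphism in $\mCl$, that is, an \emph{inducible melting cluster morphism} and not merely a ring map with the right values on $\ol{\mathbf x}$. One must check that the exchange relations emanating from $\Sigma_A$ are already compatible at the seed of $B_i$ to which the $y_k$'s belong (rather than only becoming compatible after passing to the colimit), and that the inducibility and melting constraints lift from $B$ to $B_i$ after possibly enlarging $i$. This is where the explicit construction of filtered colimits in $\mCl$ used for Theorem~\ref{T:ind-objects} is essential: each relation, each specialisation, and each melting instance involves only finitely many elements, so by filteredness it can be realised at a single stage $B_i$ of the diagram. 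Once this is verified, conditions (a)--(c) hold and the standard equivalence $\mCl\simeq\mathrm{Ind}(\mCl^{\mathrm{f}})$ yields the corollary.
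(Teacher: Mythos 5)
Your proposal follows essentially the same route as the paper: both arguments invoke the abstract characterisation of ind-completions (the paper cites \cite[Corollary~6.3.5]{KashiwaraSchapira}) and reduce the corollary to the three ingredients you list --- existence of filtered colimits in $\mCl$ (Theorem~\ref{T:filteredclosed}), realisation of every object as a filtered colimit of finite-rank objects (Theorem~\ref{T:colimit}), and compactness of the finite-rank rooted cluster algebras. The only divergence is in how the third ingredient is handled. The paper simply cites its Theorem~\ref{T:compact}, which is proved in full immediately before the corollary; you instead sketch a compactness argument inline and then defer its crux with ``once this is verified.'' What you defer is precisely the hard content of Theorem~\ref{T:compact}: showing that the lift $g_i \colon \cA(\Sigma) \to \cA(\Sigma_i)$ satisfies CM2, i.e.\ commutes with mutation along biadmissible sequences. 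In the paper this requires choosing the stage $i$ so that the finite submatrix of the colimit exchange matrix spanned by $f(\X)$ and the neighbours of its exchangeable part is \emph{uniformly attained} (Lemma~\ref{L:attain finite submatrix}) --- merely lifting the values of $f$ on the finite initial cluster to some common stage, as you do, is not enough, because distinct frozen variables of $\X_i$ can be identified later in the system and the exchange-matrix entries can still grow --- followed by Lemma~\ref{L:specialisation of neighbours} to control the specialised variables and an induction on the length of the mutation sequence using Remark~\ref{R:uniform attainment preserved under mutation}. So your argument is correct in outline and identical in strategy to the paper's, but the step you flag as ``the main obstacle'' is not routine filtered-colimit bookkeeping on finitely many generators; it is a standalone theorem in the paper, and the cleanest fix is simply to cite Theorem~\ref{T:compact} rather than reprove it.
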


We are thus motivated to call cluster algebras of infinite rank {\em ind-cluster algebras}, to distinguish them from classical cluster algebras. That is, an ind-cluster algebra is just an ind-object of $\mCl^{\mathrm{f}}$, where we forget the datum of the initial seed.

\begin{remark}\rm While cluster algebras of infinite rank successfully deal with the situation where we encounter infinite clusters---a situation that naturally arises both from representation theoretic and combinatorial perspectives---it fails to take into account the possibility of infinite mutations, which were successfully studied from combinatorial, and representation theoretic perspectives in \cite{BG}, \cite{CF19} and \cite{ACFGS}. We close this gap in a forthcoming paper \cite{gratz2025}, by also introducing {\em pro-cluster algebras}; the idea of which was first introduced in \cite{Wierzbicki}.
\end{remark}

\subsection{Application: the coordinate ring of the Sato-Segal-Wilson Grassmannian}
We approach the cluster algebra structure of the coordinate ring $\iring$ from this perspective of ind-cluster algebras: 
as already mentioned, the ring $\iring$ studied in \cite{FH-Sato} is defined as a colimit $\iring = \colim \ring$ of the homogeneous coordinate rings of finite dimensional Grassmannians which carry a classical cluster algebra structure, as proved in \cite{Scott}. It is this natural cluster algebra structure which induces the ind-cluster algebra on the coordinate ring $\iring$.

 It has been argued in \cite[Prop 2.10 (ii)]{FH-Sato} that $\iring$ can be viewed as a {\em coordinate ring of $\Gr$}, in the sense that the closed points of $\Proj\; \iring$ are in one-to-one correspondence with the points of the `universal Grassmann manifold' postulated by Sato and which is closely related to the one considered by Wilson and Segal \cite{segal1985loop}. Using Theorem \ref{T:ind-objects}, we show the following:
\begin{theorem}
The ring $\iring$ is a cluster algebra of infinite rank. Its cluster structure is induced by the classical cluster structures on homogeneous coordinate rings of finite dimensional Grassmannians. 
\end{theorem}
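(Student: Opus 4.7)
The plan is to apply Theorem~\ref{T:ind-objects} directly. Since $\iring = \colim \ring$ is already exhibited as a filtered colimit of the coordinate rings of finite Grassmannians in \cite{FH-Sato}, and each $\ring$ carries Scott's classical cluster algebra structure, it suffices to show that the transition maps in this system can be promoted to morphisms in $\mCl^{\mathrm{f}}$, i.e.\ to inducible melting cluster morphisms with respect to a compatible choice of initial seeds in each $\ring$. Once this is established, Theorem~\ref{T:ind-objects} yields that $\iring \in \mCl$; the rank is infinite because the colimit contains Pl\"ucker coordinates of $\Gr_{m,n}$ for arbitrarily large $m,n$.

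First I would make the transition maps of \cite{FH-Sato} explicit, recalling that they arise from natural inclusions of finite Grassmannians (for instance $\Gr_{m,n} \hookrightarrow \Gr_{m+1,n+1}$ obtained by appending a fixed basis vector to each subspace) and act on Pl\"ucker coordinates by appending the corresponding new index. In particular, Pl\"ucker coordinates are sent to Pl\"ucker coordinates, and no nontrivial specialisation is needed. Next I would choose compatible initial seeds in each $\ring$, for example a rectangular Postnikov-diagram cluster from \cite{Scott}, verifying inductively that appending the new index carries such a cluster into a cluster of the larger Grassmannian.

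With these seeds fixed, I would check the defining properties of morphisms in $\mCl$: (i) cluster variables are sent to cluster variables; (ii) frozen variables are sent to frozen or to exchangeable ones, which is the \emph{melting} direction permitted in $\mCl$---this occurs because a cyclically consecutive window $[i, i{+}1, \dots, i{+}m{-}1]$ in $\Gr_{m,n}$, after appending the new index, is typically no longer cyclically consecutive in the larger Grassmannian and hence ceases to be frozen; (iii) the map commutes with mutations of the initial seed along all directions inherited from the smaller cluster algebra. Condition (iii) reduces to a three-term Pl\"ucker exchange computation: since the embedding uniformly appends the same new index to every Pl\"ucker symbol, the exchange relation in $\ring$ is carried term-by-term to the corresponding exchange relation in the larger coordinate ring.

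The main obstacle will be verifying (ii) and (iii) uniformly across the whole directed system, so that Scott's Postnikov-diagram clusters extend coherently under the Fioresi--Hacon embeddings and every frozen-to-exchangeable transition is \emph{inducible} in the technical sense of \cite{ADSClus,Gcolimits}. Once this compatibility is in place, Theorem~\ref{T:ind-objects} immediately endows $\iring$ with the structure of an ind-cluster algebra of infinite rank, which is by construction induced from the classical cluster structures on the coordinate rings of the finite dimensional Grassmannians.
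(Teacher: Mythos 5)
Your overall strategy is the same as the paper's: fix the rectangle seeds $\Sigma_{Q_{m,n}}$, check that the Fioresi--Hacon transition maps $r_{m,n,m',n'}$ from (\ref{eqn:r-maps}) are inducible melting cluster morphisms (the paper cites \cite[Lemma~4.5]{Gcolimits} for this rather than reproving it), and then take the filtered colimit in $\mCl$ using Theorem~\ref{T:filteredclosed}. However, there is a gap at the final step. You write that once the system lives in $\mCl$, the colimit theorem ``immediately endows $\iring$ with the structure of an ind-cluster algebra.'' What the colimit theorem actually produces is \emph{some} rooted cluster algebra $\cA(\Sigma(F))$ that is the colimit of the system \emph{in $\mCl$}; it does not by itself identify the underlying ring of that object with $\iring$, which was defined as the colimit of the same diagram \emph{in the category of rings}. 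A priori a forgetful functor need not preserve colimits, so these two objects could differ. The paper closes this gap with Proposition~\ref{P:forgetful_commutes} (the forgetful functor $\mCl \to \mathsf{Ring}$ commutes with filtered colimits), whose proof in turn rests on Corollary~\ref{C:big-admissible}, the fact that every cluster variable of $\cA(\Sigma(F))$ already appears at a finite stage. Your argument needs this step, or an equivalent one, to conclude that the ring $\iring$ itself carries the cluster structure.

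A secondary point: your verification of CM2 is undersold. You reduce it to ``a three-term Pl\"ucker exchange computation'' at the initial seed, but CM2 must hold for \emph{all} biadmissible sequences, not just length-one mutations at the initial cluster, and moreover the exchange relations at the rectangle seed $Q_{m,n}$ are three-term Pl\"ucker relations only for the single-row and single-column vertices; for $d_{i\times j}$ with $i,j>1$ the exchange relations are of higher order (as the paper notes explicitly in Section~\ref{S:Cluster structure}). The general criterion that a cluster-to-cluster assignment compatible with the exchange matrices extends to a melting cluster morphism is exactly the content of \cite[Theorem~3.30]{Gcolimits}/\cite[Lemma~4.5]{Gcolimits}, and invoking it (or reproducing its inductive argument over biadmissible sequences) is what your step (iii) actually requires.
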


Using Sato's results on the connection between points on the infinite Grassmannian $\Gr$ and solutions of the KP-hierarchy, it then follows that the general solutions, $\tau$-functions, of the KP-hierarchy carry an ind-cluster algebra structure. Using the ind-cluster algebra we will prove the following:

\begin{theorem}
    Let $W\in\Gr$ be a point on the Sato-Segal-Wilson Grassmannian. Then the associated Pl\"ucker coordinates $\Delta_\lambda(W)$, where $\lambda$ ranges over all partitions, are Laurent polynomials with non-negative coefficients of those coordinates for which $\lambda$ has a Young diagram of rectangular shape. 
\end{theorem}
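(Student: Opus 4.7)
The plan is to identify the family of Plücker coordinates indexed by rectangular partitions with a distinguished cluster of the ind-cluster algebra $\iring$, and then to apply the Laurent phenomenon together with positivity of cluster algebras.

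By a result of Scott, for each pair $(m,n)$ the cluster structure on $\ring$ admits a ``rectangles seed'' $\Sigma_{m,n}^{\mathrm{rect}}$ whose cluster variables are precisely the Plücker coordinates $\Delta_{(b^a)}$ for rectangular partitions $(b^a)\subseteq (n^m)$, with the boundary ones among these being frozen and the interior ones mutable. The first task is to verify that the structure maps $\ring\to\mathbb{C}[\Gr_{m',n'}]$ defining $\iring = \colim \ring$ are inducible melting cluster morphisms from $(\ring,\Sigma_{m,n}^{\mathrm{rect}})$ to $(\mathbb{C}[\Gr_{m',n'}],\Sigma_{m',n'}^{\mathrm{rect}})$: rectangular Plücker coordinates map to rectangular Plücker coordinates, the exchange quiver for $(m,n)$ appears as an induced subquiver of the one for $(m',n')$, and previously frozen rectangles become mutable as they cease to lie on the boundary after enlargement. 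Granting this, the rooted cluster algebras $(\ring,\Sigma_{m,n}^{\mathrm{rect}})$ form a directed system in $\mCl^{\mathrm{f}}$ whose colimit, by Theorem~\ref{T:ind-objects}, is a rooted ind-cluster algebra $(\iring,\Sigma^{\mathrm{rect}})$ with initial cluster consisting of all rectangular Plücker coordinates.

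For an arbitrary partition $\lambda$, choose $m$ and $n$ so that $\lambda\subseteq (n^m)$; by Scott's theorem $\Delta_\lambda$ is a cluster variable in the rooted cluster algebra $(\ring,\Sigma_{m,n}^{\mathrm{rect}})$. By the Laurent phenomenon of Fomin--Zelevinsky, together with the positivity theorem of Lee--Schiffler and Gross--Hacking--Keel--Kontsevich, $\Delta_\lambda$ is a Laurent polynomial with non-negative integer coefficients in the rectangular Plücker coordinates fitting inside $(n^m)$. Pushing this expression forward along the colimit map exhibits $\Delta_\lambda\in\iring$ as a non-negative integer Laurent polynomial in the full family of rectangular Plücker coordinates, and evaluation at a point $W\in\Gr$ yields the claim.

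The main obstacle will be the combinatorial verification in the first step: describing Scott's rectangles seed and its exchange quiver explicitly, understanding how this quiver behaves under the enlargement $(m,n)\to (m',n')$, and checking the melting cluster morphism conditions against the definitions underlying Theorem~\ref{T:ind-objects}. Once these compatibilities are established, the remainder of the argument reduces to citing finite-rank cluster algebra results and the main theorem of this paper.
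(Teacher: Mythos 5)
Your proposal is correct and follows essentially the same route as the paper: the paper also builds the rectangle seeds $\Sigma_{m,n}$ (frozen variables being the empty diagram and the maximal-height/width rectangles), verifies via \cite{Gcolimits} that the maps $r_{m,n,m',n'}$ are inducible melting cluster morphisms that melt boundary rectangles as the bounding box grows, identifies $\bZ[\Gr]$ with the colimit $\cA(\Sigma_{Q_\infty})$ using Theorem~\ref{T:directed_closed} and Proposition~\ref{P:forgetful_commutes}, and then deduces the statement from the Laurent phenomenon together with positivity applied in a finite-rank piece containing $\lambda$ (Corollary~\ref{cor:Laurent}), followed by evaluation at $W$.
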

We will recall the precise definition of the coordinates $\Delta_\lambda(W)$ later in Section \ref{S:tau functions}.

\begin{remark}\rm 
Various types of $\tau$-functions have been studied extensively in the literature. Perhaps most famously, multi-soliton solutions of the KP-hierarchy have been constructed (see e.g. \cite{miwa2000solitons}, \cite{kac2013bombay} as well as references therein) and each such solution corresponds to a point $W$ on the Sato-Segal-Wilson Grassmannian $\Gr$. Other celebrated examples include $\tau$-functions obtained from algebraic curves; see e.g. \cite{krichever1977methods,krichever1977integration,dubrovin1981theta,harnad2011schur} and \cite{kodama2024kp}. Another point of view is to interpret $\tau$-functions of integrable hierarchies, and the ones of the KP-hierarchy in particular, as generating functions for objects in enumerative geometry; see e.g. the review \cite{kazarian2015combinatorial} as well as references therein. 

More recently, Kodama and Williams \cite{kodama2011kp,kodama2014kp} connected a special class of soliton solutions of the KP-equation to the combinatorics of plabic graphs which describe the cluster algebra structure for the coordinate ring of {\em finite} Grassmannians. These data of the finite Grassmannians describe the asymptotics of these soliton solutions and are not connected to the Pl\"ucker coordinates of the Sato-Segal-Wilson Grassmannian which are the focus of this article; see the comments in Section \ref{S:multi-soliton} at the end of this article.
\end{remark}

There is a growing literature and research activity on `positive geometries' emerging from physics and related mathematics and combinatorics; see e.g. \cite{lam2024invitation,ranestad2025positive} for some recent overview articles as well as references therein. On the mathematics side the probably best studied examples are related to Lusztig's notion of {\em total positivity} \cite{lusztig1994total,lusztig1998introduction} in the case of (finite-dimensional) Grassmannians; for a small sample of works see e.g. \cite{speyer2005tropical,Postnikov,kodama2014kp,williams2021positive}. Inspired by the connection between total positivity of finite Grassmannians and KP solitons \cite{kodama2014kp} we generalise in this article the definition of total positivity to the infinite Sato-Segal-Wilson Grassmannian: 
\begin{definition}
    Define the {\em totally positive Sato-Segal-Wilson Grassmannian} $\Gr^+$ to be the set of all points $W\in\Gr$ such that their Pl\"ucker coordinates $\Delta_\lambda(W)>0$ for all partitions $\lambda$.
\end{definition}

As an immediate corollary of the ind-cluster algebra structure on $\iring$ and the above theorem we can then conclude the following:
\begin{corollary}
    We have $W\in\Gr^+$ if and only if $\Delta_\lambda(W)>0$ for all {\em rectangular} partitions $\lambda$. 
\end{corollary}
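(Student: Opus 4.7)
The forward implication is immediate from the definition of $\Gr^+$, since rectangular partitions are among all partitions. The plan for the converse is to invoke the preceding theorem directly. Given $W\in\Gr$ with $\Delta_\lambda(W)>0$ for every rectangular $\lambda$, I would fix an arbitrary partition $\mu$ and apply the theorem to express
\[
\Delta_\mu \;=\; \sum_{\alpha} c_\alpha \prod_{\lambda \text{ rectangular}} \Delta_\lambda^{\,\alpha_\lambda},
\]
a (finite) Laurent polynomial in the rectangular Pl\"ucker coordinates with coefficients $c_\alpha \ge 0$.

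Next I would argue the expression is non-trivial. Since the rectangular Pl\"ucker coordinates appear as the variables of an ind-cluster in the ind-cluster algebra structure on $\iring$ established in the preceding theorem, they are algebraically independent over $\bC$; the Laurent representative of $\Delta_\mu$ above is therefore unique, and as $\Delta_\mu \neq 0$ in $\iring$, at least one coefficient $c_\alpha$ must be strictly positive.

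Finally, I would evaluate at $W$: by hypothesis $\Delta_\lambda(W)>0$ for each rectangular $\lambda$, so every monomial $\prod_\lambda \Delta_\lambda(W)^{\alpha_\lambda}$ is a positive real number, and a sum of non-negative reals with at least one strictly positive term is strictly positive. Hence $\Delta_\mu(W)>0$, and since $\mu$ was arbitrary, $W\in\Gr^+$. I do not anticipate a serious obstacle in this corollary itself---the substantive content is packaged into the preceding theorem's Laurent positivity statement. The only point that requires a moment's care is the algebraic independence of the rectangular Pl\"ucker coordinates, which I expect to follow directly from their forming an ind-cluster in the ind-cluster algebra structure on $\iring$.
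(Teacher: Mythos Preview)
Your argument is correct and matches the paper's intended reasoning: the paper presents this as an ``immediate corollary of the ind-cluster algebra structure on $\iring$ and the above theorem'' without spelling out a proof, and what you have written is precisely the unpacking of that sentence. Your care in checking that the Laurent expression is genuinely nonzero (via algebraic independence of the initial cluster, i.e.\ Corollary~\ref{cor:indep} in the paper) is the one point the paper leaves implicit, and you handle it correctly.
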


It would be interesting as a future application to further investigate positivity aspects of the known stratification of the Sato-Segal-Wilson Grassmannian \cite{segal1985loop,pressley1985loop} and solutions of the KP-hierarchy by making use of the ind-cluster algebra structure similar as finite rank cluster algebras are being used to discuss positivity for finite Grassmannians; see e.g. \cite{williams2021positive}.

\bigskip
\noindent{\bf Acknowledgments.} It is a pleasure to thank Dinakar Muthiah, Greg Stevenson and Christian Voigt for several insightful discussions and their generous sharing of knowledge. C.K. gratefully acknowledges the hospitality of Aarhus University during a research stay in February 2023 were part of the research was carried out. S.G. was supported by VILLUM FONDEN (Grant Number VIL42076) and
EPSRC (Grant Number EP/V038672/1). C.K. acknowledges the financial support of EPSRC (Grant Number EP/V053728/1).

\section{Ind-cluster algebras}
\subsection{Rooted cluster algebras}

A holistic definition of a structure preserving map between cluster algebras is hard to come by: The recursive combinatorial definition of a cluster structure on an algebra, which contributes to its computational attractiveness, also makes it difficult to pin down what a map preserving this structure should look like.

A version of a category of pointed cluster algebras has been proposed by Assem, Dupont and Schiffler \cite{ADSClus}. They introduce the category $\sf{Clus}$ of rooted cluster algebras. Its object are {\em rooted cluster algebras}, that is, cluster algebras with a fixed initial seed. This allows for a convenient definition of what it means for a ring homomorphism to commute with mutation. Before we treat the morphisms, we give a brief introduction to (rooted) cluster algebras. We follow the convention and notation from\cite{ADSClus}; the definitions in this section go, unless otherwise stated, back to Fomin and Zelevinsky \cite{FZI}. While for the reader encountering cluster algebras for the first time, the definitions might appear at first quite technical, they will soon become more natural when observed in concrete examples.

We explicitly include seeds of infinite rank, as introduced in \cite{GG1}. For this purpose, we consider {\em infinite integer matrices}: Specifically, given a set $\X$ of arbitrary cardinality, and a subset $\ex \subseteq \X$, we consider tuples of the form $B = (b_{vw})_{v \in \ex, w \in \X}$ from $\bZ$, and call this an integer matrix with rows labelled by $\ex$ and columns labelled by $\X$. The submatrix $B \mid_\ex = (b_{vw})_{v,w \in \ex}$ is called the {\em $\ex$-part of $B$}. The matrix $B$ is called {\em $\ex$-locally finite}, if  every row has only finitely many non-zero entries, that is, for all $v \in \ex$ there are only finitely many $w \in \X$ such that $b_{vw} \neq 0$. It is called {\em skew-symmetriseable} if the $\ex$-part of $B$ is skew-symmetriseable, that is, there exist positive integers $\{d_v \in \bZ_{>0}\}_{v \in \ex}$ such that for all $v,w \in \ex$ we have $d_vb_{vw} = -b_{wv}d_w$.

\begin{definition}\label{D:seed}
	A {\em seed} is a triple $\Sigma = (\X,\ex,B)$, where $\X$ is a set of indeterminates, $\ex \subseteq \X$ and $B = (b_{vw})_{v \in \ex,w\in \X}$ is an $\ex$-locally finite skew-symmetriseable integer matrix.
		
		The set $\X$ is called the {\em cluster}, the elements of the set $\ex$ the {\em exchangeable variables}, the elements of the set $\X \setminus \ex$ the {\em frozen variables}, and the matrix $B$ the {\em exchange matrix of $\Sigma$}. The cardinality $|\X|$ is the {\em rank of $\Sigma$}. The $\ex$-part of $B$ is called the {\em principal part of $B$}.
\end{definition}

\begin{remark}\rm
	Conventionally, the cluster $\X$ is assumed to be finite. We explicitly allow the infinite rank case. The only concession we make is the $\ex$-local finiteness condition on the exchange matrix $B$. This will ensure that the exchange relations (\ref{eqn:exchange relation}) from Definition \ref{D:mutation} are well-defined.
\end{remark}

\begin{remark}\label{R:quiver}\rm
	If the principal part of the exchange matrix $B$ of a seed $\Sigma = (\X,\ex,B)$ is skew-symmetric, that is, if for all $v,w \in \ex$ we have $b_{vw} = - b_{wv}$, we can consider the {\em associated ice quiver} instead: It is the quiver with vertices given by $\X$ and with $b_{uv}$ arrows from vertex $u$ to vertex $v$ whenever $b_{uv}>0$ and $-b_{uv}$ arrows from $u$ to $v$ whenever $b_{uv}<0$. The vertices corresponding to frozen cluster variables are marked as frozen, symbollically this is done by drawing a box around the vertex. When working in concrete examples, it will sometimes be useful to allow arrows between frozen variables (cf. Section \ref{S:combinatorial_description}). These are of no importance for the seeds, and are not recorded in the exchange matrix.
The ice quiver associated with a seed encodes all the information of the seed. In examples of seeds, we will often present the ice quiver in lieu of the triple of cluster, exchangeable variables, and exchange matrix. For example, the seed 
\[
	\Sigma = (\{x_1, x_2, x_3, x_4\}, \{x_1,x_2\}, B=\begin{bmatrix} 0 & 1 & 1 & 1 \\ -1 & 0 & -2 & 1 
    \end{bmatrix})
\]
can be represented by either of the two ice quivers
\begin{equation*}
\begin{tikzpicture}
	\node (1) at (0,0) {$x_1$};
	\node[draw] (2) at (1,1) {$x_3$};
	\node (3) at (1,-1) {$x_2$};
	\node[draw] (4) at (-1,-1) {$x_4$};
	
	\draw[->] (1) to (2);
	\draw[->] (1) to (3);
	\draw[->] (1) to (4);
	
	\draw[->, bend left] (2) to (3);
	\draw[->, bend right] (2) to (3);
	
	\draw[->] (3) to (4);
\end{tikzpicture}
\qquad\text{ or }\qquad 
\begin{tikzpicture}
	\node (1) at (0,0) {$x_1$};
	\node[draw] (2) at (1,1) {$x_3$};
	\node (3) at (1,-1) {$x_2$};
	\node[draw] (4) at (-1,-1) {$x_4$};
	
	\draw[->] (1) to (2);
	\draw[->] (1) to (3);
	\draw[->] (1) to (4);
	
	\draw[->, bend left] (2) to (3);
	\draw[->, bend right] (2) to (3);

        \draw[->, bend right] (2) to (4);
	
	\draw[->] (3) to (4);
\end{tikzpicture}.
\end{equation*}
Note that the requirement for a skew-symmetric exchange matrix to be $\ex$-locally finite translates to each exchangeable vertex in the associated ice quiver having finite valency.
\end{remark}

\begin{definition}\label{D:mutation}
	Let $\Sigma = (\X,\ex,B = (b_{uv}))$ be a seed, and let $x \in \ex$. The {\em mutation $\mu_x(\Sigma)$ of $\Sigma$ at $x$} is the seed 
	\[
		\mu_x(\Sigma) = (\mu_x(\X),\mu_x(\ex),\mu_x(B)),
	\]
where, setting
\begin{eqnarray}\label{eqn:exchange relation}
	x' = \frac{\prod_{b_{xu}>0}v^{b_{xu}} + \prod_{b_{xv}<0}v^{-b_{xv}}}{x} \in \bQ(\X)
\end{eqnarray}
we have
\begin{itemize}
	\item $\mu_x(X) = (X \setminus \{x\}) \cup x'$;
	\item $\mu_x(\ex) = (\ex \setminus \{x\}) \cup x'$;
	\item $\mu_x(B)$ is Fomin-Zelevinsky mutation of the matrix $B$ at the row and column labelled by $x$, i.e.\ we have $\mu_x(B) = (b'_{uv})_{u \in \mu_x(\ex),v \in \mu_{x}(\X)}$, where
\[
	b'_{uv} = \begin{cases}
			- b_{xv} & \text{ if $u = \mu_{x}(x)$}\\
			-b_{ux} & \text{ if $v = \mu_x(x)$} \\
			b_{uv} + \frac{1}{2}(|b_{ux}| b_{xv} + b_{ux}|b_{xv}|) & \text{otherwise.}

		\end{cases}
\]
\end{itemize}

Equation (\ref{eqn:exchange relation}) is referred to as an {\em exchange relation}. Furthermore, given $y \in \X$ we write
\[
	\mu_x(y) = \begin{cases}
							x' & \text{if $y = x$} \\
							y & \text{else},
						\end{cases}
\]
so that we have $\mu_x(\X) = \{\mu_x(y) \mid x \in \X\}$.
\end{definition}

We can naturallly iterate mutation from a given seed. This prepares the definition of a rooted cluster algebra later.

\begin{definition}
	Let $\Sigma = (\X,\ex,B = (b_{vw}))$ be a seed. A {\em $\Sigma$-admissible sequence} is a finite sequence $(x_1, \ldots, x_\ell)$ such that $x_1 \in \ex$ and for all $2 \leq i \leq \ell$ we have 
	\[
		x_i \in \mu_{x_{i-1}} \circ \cdots \circ \mu_{x_1}(\ex).
	\]
We call $\ell$ the {\em length} of the sequence $(x_1, \ldots, x_\ell)$.
Two seeds $\Sigma$ and $\Sigma'$ are called {\em mutation equivalent}, if there exists a $\Sigma$-admissible sequence $(x_1, \ldots, x_\ell)$ such that 
\[
	\Sigma' = \mu_{x_\ell} \circ \cdots \mu_{x_1}(\Sigma).
\]
\end{definition}

Given a $\Sigma$-admissible sequence $\underline{x} = (x_1, \ldots, x_\ell)$ we write $\mu_{\underline{x}}$ for the composite $\mu_{x_\ell} \circ \cdots \circ \mu_{x_1}$. By convention, mutation along the empty sequence $\varnothing$ of length $\ell = 0$ does not change the seed: $\mu_{\varnothing}(\Sigma) = \Sigma$.
Indeed, mutation equivalence defines an equivalence relation on the class of all seeds. Given a seed $\Sigma$, we denote by $[\Sigma]$ its mutation equivalence class.

\begin{definition}[{\cite[Definition~2.3]{FZI} \cite[Definition~1.4]{ADSClus}}]\label{D:cluster algebra}
	Let $\Sigma = (\X,\ex,B)$ be a seed. The {\em cluster algebra associated to $\Sigma$} is the subalgebra
	\[
		\cA(\Sigma) = \bZ\Bigl[ \bigcup_{(\tilde{\X},\tilde{\ex},\tilde{B}) \in [\Sigma]} \tilde{\X}\Bigr] \subseteq \bQ(\X)
	\]
of the field of rational functions in $\X$, generated by the elements of clusters of seeds in $[\Sigma]$.
\begin{itemize}
 \item The elements in $\bigcup_{(\tilde{\X},\tilde{\ex},\tilde{B}) \in [\Sigma]} \tilde{\X}$ are called the {\em cluster variables of $\cA(\Sigma)$}.
\item  The elements in $\bigcup_{(\tilde{\X},\tilde{\ex},\tilde{B}) \in [\Sigma]} \tilde{\ex}$ are called the {\em exchangeable cluster variables of $\cA(\Sigma)$}
\item The elements in $\X \setminus \ex$, are called the {\em frozen variables of $\cA(\Sigma)$}. 
\item The {\em rank of $\cA(\Sigma)$} is $|\X|$, the rank of $\Sigma$. 
\item The seeds in $[\Sigma]$ are called the {\em seeds of $\cA$}. 
\item The pair $(\cA(\Sigma), \Sigma)$ is called the {\em rooted cluster algebra with initial seed $\Sigma$}. 
\end{itemize}
\end{definition}

Note that the cluster algebra associated to $\Sigma$ only depends on the mutation equivalence class of $\Sigma$. We remove this freedom when passing to the rooted cluster algebra, which comes with a fixed initial seed. By abuse of notation, we write $\cA(\Sigma)$ for the rooted cluster algebra $(\cA(\Sigma),\Sigma)$.

\begin{notation}\label{N:integer mutation}
    For ease of notation, for any $\Sigma$-admissible sequence $(x_1, \ldots, x_k)$ and any integer $m \in \bZ$ we will allow the expression $\mu_{x_k} \circ \cdots \circ \mu_{x_1}(m)$ to mean
\[
    \mu_{x_k} \circ \cdots \circ \mu_{x_1}(m) = m,
\]
that is, mutation of an integer along an admissible sequence leaves the integer unchanged.
\end{notation}

\subsection{A category of rooted cluster algebras}

Let $\cA(\Sigma)$ and $\cA(\Sigma')$ be rooted cluster algebras with initial seeds $\Sigma = (\X, \ex, B)$ and $(\X', \ex', B')$ respectively. Given a ring homomorphism $f \colon \cA(\Sigma) \to \cA(\Sigma')$ and a $\Sigma$-admissible sequence $(x_1, \ldots, x_\ell)$, we call the sequence $(f,\Sigma,\Sigma')$-{\em biadmissible} if its image $(f(x_1), \ldots, f(x_\ell))$ is $\Sigma'$-admissible. When the rootings are clear from context, we usually just write that a sequence $(x_1, \ldots, x_\ell)$ is {\em $f$-biadmissible}. We now have the tools to define structure preserving maps between cluster algebras.

\begin{definition}\label{D:melting}[\cite[Definition~2.2]{ADSClus}]
A {\em melting cluster morphism} $f \colon \cA(\Sigma) \to \cA(\Sigma')$ between rooted cluster algebras with seeds $\Sigma = (\X,\ex,B)$ and $\Sigma' = (\X',\ex',B')$ respectively, is a unital ring homomorphism satisfying the following:
	\begin{description}
		\item[CM1] $f(\X) \subseteq \X' \cup \bZ$;
		\item[CM2] for any {\em $f$-biadmissible sequence} $(x_1, \ldots, x_\ell)$ and for all $y \in \X$, we have
		\[
			f(\mu_{x_\ell} \circ \cdots \circ \mu_{x_1}(y)) = \mu_{f(x_\ell)} \circ \cdots \circ \mu_{f(x_1)}(f(y));
		\]
		\item[MCM] $f(\ex) \subseteq \ex' \cup \bZ$.
	\end{description}
\end{definition}

Axioms CM1 and CM2 are not enough to ensure composability of cluster morphisms, hence further assumptions are required. The first M in the name of axiom MCM stands for {\em melting}: We are allowed to {\em melt} frozen cluster variables, that is, send a frozen cluster variable to an exchangeable. However, we are not allowed to freeze: An exchangeable cluster variable in the domain's initial seed $\Sigma$ must be sent to an exchangeable cluster variable in the target's initial seed $\Sigma'$. In \cite{Wierzbicki} the opposite case is treated, where we are allowed {\em freezing} (sending exchangeable cluster variables to frozen variables), but not melting.

\begin{proposition}[{\cite[Proposition~2.5]{ADSClus}}]
	There is a category $\sf{Clus}$ which has as objects rooted cluster algebras and as morphisms melting cluster morphisms. 
\end{proposition}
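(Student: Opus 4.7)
The plan is to verify the two non-trivial pieces of data of a category (identity morphisms and closure under composition); associativity is inherited from composition of unital ring homomorphisms. The identity morphism $\mathrm{id}\colon\cA(\Sigma)\to\cA(\Sigma)$ is immediately a melting cluster morphism: it is a unital ring homomorphism sending $\X$ into $\X$ and $\ex$ into $\ex$, and it commutes tautologically with every mutation, so CM1, MCM and CM2 are automatic.

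For closure under composition, let $f\colon\cA(\Sigma)\to\cA(\Sigma')$ and $g\colon\cA(\Sigma')\to\cA(\Sigma'')$ be melting cluster morphisms. That $g\circ f$ is a unital ring homomorphism is immediate. Axioms CM1 and MCM for $g\circ f$ follow from the corresponding axioms for $f$ and $g$ together with the observation that a unital ring homomorphism fixes the prime subring $\bZ$: for example $(g\circ f)(\X)\subseteq g(\X'\cup\bZ)\subseteq \X''\cup\bZ$, and the analogous inclusion holds for $\ex$.

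The main obstacle is CM2 for $g\circ f$. The key intermediate claim I would prove is that any $(g\circ f)$-biadmissible sequence $(x_1,\ldots,x_\ell)$ is itself $f$-biadmissible and its image $(f(x_1),\ldots,f(x_\ell))$ is $g$-biadmissible. I would establish this by induction on $\ell$, with an empty sequence trivial. For the inductive step, one writes $x_\ell=\mu_{x_{\ell-1}}\circ\cdots\circ\mu_{x_1}(y)$ for some $y\in\ex$ and applies CM2 for $f$ (legal by the inductive hypothesis) to obtain the identity $f(x_\ell)=\mu_{f(x_{\ell-1})}\circ\cdots\circ\mu_{f(x_1)}(f(y))$. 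Since $g$ is unital it fixes $\bZ$, so the fact that $(g\circ f)(x_\ell)$ is an exchangeable variable in the appropriate mutated seed (and hence not an integer) forces $f(x_\ell)\notin\bZ$. By Notation~\ref{N:integer mutation} this rules out the case $f(y)\in\bZ$ (as mutation would then leave the integer unchanged), so MCM for $f$ places $f(y)$ in $\ex'$ and consequently $f(x_\ell)$ in the required mutated exchangeable set $\mu_{f(x_{\ell-1})}\circ\cdots\circ\mu_{f(x_1)}(\ex')$. The analogous argument with $g$ in place of $f$ then yields $g$-biadmissibility of the image sequence.

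Granted this claim, CM2 for $g\circ f$ follows formally: for $y\in\X$ and a $(g\circ f)$-biadmissible sequence $\underline{x}=(x_1,\ldots,x_\ell)$, apply CM2 for $f$ to rewrite $f(\mu_{\underline{x}}(y))=\mu_{f(\underline{x})}(f(y))$, then apply $g$ and invoke CM2 for $g$ on the $g$-biadmissible image sequence. The one edge case is when $f(y)\in\bZ$ (melting); here both sides of the desired identity equal that same integer by Notation~\ref{N:integer mutation} and the unitality of $g$. Throughout, the technical delicacy lies in bookkeeping the interaction between the melting axiom MCM, the convention that mutation fixes integers, and the fact that unital homomorphisms cannot send integers to non-integers; once this interaction is made precise in the biadmissibility lemma, the rest of the verification is mechanical.
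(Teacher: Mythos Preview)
Your argument is correct and is essentially the standard verification (as in \cite{ADSClus}); note that the paper itself does not supply a proof here but simply cites \cite[Proposition~2.5]{ADSClus}, so there is nothing further to compare against.
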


However, melting cluster morphisms may exhibit degenerate behaviour; they do not necessarily preserve the cluster structure in a meaningful way, as the following example illustrates.

\begin{example}\rm
	Consider the seed $\Sigma$ of rank $3$ and the seed $\Sigma'$ of rank $1$ determined by their ice quivers, the latter with a single vertex:
\begin{center}
\begin{tikzpicture}
	\node (label) at (-2,0) {$\Sigma=$};
	\node (1) at (0,0) {$x$};
	\node[draw] (2) at (1,0) {$y_1$};
	\node[draw] (3) at (-1,0) {$y_2$};
	
	\draw[->] (1) to (2);
	\draw[->] (3) to (1);

	\node (label') at (3,0) {$\Sigma'=$};
	\node (label) at (4,0) {$z$};
\end{tikzpicture}
\end{center}	
We have $\cA(\Sigma) = \bZ[x, y_1, y_2, \frac{y_1+y_2}{x}]$ and $\cA(\Sigma') = \bZ[z,\frac{2}{z}]$. The map
\begin{eqnarray*}
	f \colon \cA(\Sigma) & \to & \cA(\Sigma') \\
		x &\mapsto & 0 \\
		y_1 &\mapsto & 1 \\
		y_2 &\mapsto & -1 \\
		\frac{y_1 + y_2}{x} & \mapsto & r
\end{eqnarray*}
is a rooted cluster morphism for any $r \in \cA(\Sigma)$. The cluster structure is not meaningfully preserved.
\end{example}

Therefore, we restrict our attention to melting cluster morphisms which guarantee that the cluster structure is preserved, and the morphism is controlled by its values on the initial cluster. These morphisms were first studied by Chang and Zhu in \cite{Chang-Zhu-rooted}.

\begin{definition}[{\cite[Definition~2.8]{Chang-Zhu-rooted}}]
	A melting cluster morphism $f \colon \cA(\Sigma) \to \cA(\Sigma')$ between rooted cluster algebras with initial seeds $\Sigma = (\X,\ex,B)$ and $\Sigma' = (\X',\ex',B')$ respectively is called {\em inducible} if it satisfies the following axiom:
	\begin{itemize}	
	\item[iMCM] $f(\ex) \subseteq \ex' \cup \bZ \setminus \{0\}$.
	\end{itemize}
\end{definition}

By the Laurent phenomenon \cite[Theorem~3.1]{FZI}, which asserts that every cluster variable can be expressed as a Laurent polynomial in its initial cluster, with denominator a product of exchangeable cluster variables, an inducible cluster morphism is uniquely determined by its values on the initial cluster.

\begin{definition}
	We denote by $\mCl$ the wide subcategory of $\sf{Clus}$ consisting of inducible melting cluster morphisms, and call this the {\em category of melting cluster morphisms}.
\end{definition}

We recall that a {\em wide} subcategory of a category of $\cC$ is a subcategory of $\cC$ which has the same objects as $\cC$, but not necessarily all of its morphisms. We are now ready to discuss the main new objects, ind-cluster algebras.

\subsection{Cluster algebras of infinite rank are ind-cluster algebras}

Cluster algebras of infinite rank have first been studied in \cite{GG1}. It was shown in \cite{Gcolimits} that any cluster algebra of infinite rank $\cA$ can be realised as a colimit of cluster algebras of finite rank in $\mCl$. 

\begin{theorem}[\cite{Gcolimits}]
\label{T:colimit}
	Let $\cA$ be a cluster algebra of infinite rank, and let $\Sigma$ be any seed of $\cA$. The rooted cluster algebra $\cA(\Sigma)$ is isomorphic to a filtered colimit of rooted cluster algebras of finite rank in $\mCl$.
\end{theorem}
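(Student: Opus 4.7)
The strategy is to construct, for the given seed $\Sigma = (\X,\ex,B)$, an explicit filtered system of finite-rank subseeds whose colimit in $\mCl$ recovers $\cA(\Sigma)$. For each finite subset $Y \subseteq \ex$, form the ``closed neighbourhood'' $\X_Y = Y \cup \{w \in \X : b_{vw} \neq 0 \text{ for some } v \in Y\}$. By $\ex$-local finiteness of $B$ this is a finite set. Put $\Sigma_Y = (\X_Y, Y, B_Y)$ where $B_Y$ is the submatrix of $B$ with rows indexed by $Y$ and columns indexed by $\X_Y$, so that the elements of $\X_Y \setminus Y$ are frozen in $\Sigma_Y$. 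The poset of finite subsets of $\ex$ under inclusion is filtered; for $Y \subseteq Y'$ the identity assignment on overlapping cluster variables extends to a ring map $\varphi_{Y,Y'} : \cA(\Sigma_Y) \to \cA(\Sigma_{Y'})$, in which frozen variables of $\Sigma_Y$ lying in $Y'$ are ``melted'' into exchangeables of $\Sigma_{Y'}$. In the same fashion one defines $\varphi_Y : \cA(\Sigma_Y) \to \cA(\Sigma)$.

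Next I would verify that each $\varphi_{Y,Y'}$ and each $\varphi_Y$ is an inducible melting cluster morphism, i.e.\ satisfies (CM1), (CM2) and (iMCM). Axioms (CM1) and (iMCM) are immediate from the definition, so the content lies in (CM2). The key observation is that the exchange relation (\ref{eqn:exchange relation}) attached to an exchangeable variable $x$ depends only on the row of the exchange matrix labelled by $x$, and by construction this row, together with all of its non-zero entries, is preserved under passage from $\Sigma_Y$ to $\Sigma_{Y'}$ and to $\Sigma$. An induction on the length of a biadmissible sequence then yields compatibility with iterated mutation, provided one tracks how Fomin--Zelevinsky mutation alters further rows; again $\ex$-local finiteness ensures every intermediate row has finite support, so enlarging $Y$ captures all relevant entries.

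To establish the colimit claim, let $(\psi_Y : \cA(\Sigma_Y) \to \cB)$ be any cocone in $\mCl$ over the diagram $\{\cA(\Sigma_Y)\}$. Every cluster variable $z$ of $\cA(\Sigma)$ arises from $\Sigma$ by some finite admissible sequence $(x_1, \ldots, x_\ell)$, which involves only finitely many elements of $\X$. Choosing $Y$ large enough to contain all these exchangeable variables and their mutation neighbourhoods, the sequence becomes $\Sigma_Y$-admissible and produces a cluster variable $z_Y \in \cA(\Sigma_Y)$ with $\varphi_Y(z_Y) = z$. Set $\psi(z) := \psi_Y(z_Y)$. Independence of the choice of $Y$ follows from the cocone condition $\psi_{Y'} \circ \varphi_{Y,Y'} = \psi_Y$, and uniqueness is forced because $\psi \circ \varphi_Y = \psi_Y$ determines $\psi$ on the image of every $\varphi_Y$, while these images jointly cover all cluster variables of $\cA(\Sigma)$.

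The main obstacle is the verification of axiom (CM2) for the transition morphisms $\varphi_{Y,Y'}$: a variable frozen in $\Sigma_Y$ may become exchangeable (hence mutable) in $\Sigma_{Y'}$, so one must show that every mutation sequence which is $\varphi_{Y,Y'}$-biadmissible produces the same rational expressions on both sides, despite the freedom to mutate further in $\Sigma_{Y'}$. This amounts to checking that the closed-neighbourhood construction captures enough columns of $B$ to reproduce each relevant exchange relation faithfully, and the hypothesis of $\ex$-local finiteness of $B$ is precisely what makes this possible. Once this bookkeeping is under control, Steps 1 and 3 are essentially formal.
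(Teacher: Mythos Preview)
Your construction is essentially the content of the cited result \cite[Theorem~4.6]{Gcolimits}; the paper's own proof simply invokes that reference for the countable-rank case and then appeals to Remark~\ref{R:uncountable_rank} (decomposition into exchangeably connected components together with isolated frozens) to reduce the uncountable case to the countable one. Your direct filtered system over finite subsets of $\ex$ avoids this two-step reduction, and the mechanism you identify for (CM2) --- that $\X_Y$ is closed under taking $B$-neighbours of elements of $Y$, so that rows labelled by $Y$ retain their support in $\X_Y$ under iterated mutation --- is exactly the right one.

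There is, however, a genuine gap. Your index set ranges over finite subsets $Y \subseteq \ex$, and $\X_Y$ consists of $Y$ together with the $B$-neighbours of elements of $Y$. An \emph{isolated} frozen variable --- some $w \in \X \setminus \ex$ with $b_{vw} = 0$ for all $v \in \ex$ --- never lies in any $\X_Y$, so the colimit of your system has initial cluster $\bigcup_Y \X_Y \subsetneq \X$ in general. The resulting rooted cluster algebra is then not isomorphic to $\cA(\Sigma)$ in $\mCl$: the initial clusters have different cardinalities, so the seeds cannot be similar in the sense of Definition~\ref{D:similar}, and Proposition~\ref{P:similar_iso} applies. The fix is easy --- for instance, index over pairs $(Y,Z)$ with $Y \subseteq \ex$ and $Z \subseteq \X \setminus \ex$ both finite, setting $\X_{(Y,Z)} = \X_Y \cup Z$ with the extra frozens adjoined as isolated vertices --- but as written the argument fails for any seed with isolated frozens. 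Note that the paper's route, via Remark~\ref{R:uncountable_rank}, handles exactly this point by treating isolated frozens as separate rank-one pieces of the filtered diagram.
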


\begin{proof}
	In \cite[Theorem~4.6]{Gcolimits} it is shown that every rooted cluster algebra of countably infinite rank is isomorphic to an $\bN$-shaped colimit of rooted cluster algebras of finite rank in $\mCl$. It follows from Remark \ref{R:uncountable_rank} that every rooted cluster algebra of infinite rank is isomorphic to a filtered colimit of rooted cluster algebras of countable rank. 
    The assertion follows.
	\end{proof}

The converse also holds: filtered colimits of finite rank cluster algebras exist in $\mCl$. In fact:

\begin{theorem}\label{T:filteredclosed}
	The category $\mCl$ has filtered colimits. 
 \end{theorem}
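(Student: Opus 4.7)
The plan is to construct the colimit explicitly. Given a filtered diagram $D:\cJ\to\mCl$ with $D(i)=\cA(\Sigma_i)$, $\Sigma_i=(\X_i,\ex_i,B_i)$, and transitions $f_{ij}:\cA(\Sigma_i)\to\cA(\Sigma_j)$, I would define a seed $\Sigma_\infty=(\X_\infty,\ex_\infty,B_\infty)$ so that $\cA(\Sigma_\infty)$ together with canonical maps $\iota_i$ is the colimit. On $\bigsqcup_i\X_i$, introduce the relation $\sim$ that identifies $x\in\X_i$ with $y\in\X_j$ whenever there exists $k\in\cJ$ with morphisms from both $i$ and $j$ such that $f_{ik}(x)=f_{jk}(y)\in\X_k$ (rather than in $\bZ$); filteredness of $\cJ$ makes $\sim$ an equivalence relation. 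Set $\X_\infty=\bigsqcup_i\X_i/\sim$ and let $\ex_\infty\subseteq\X_\infty$ consist of those classes with a representative in some $\ex_i$. Define $B_\infty$ by $b^\infty_{[x][y]}=b^i_{xy}$ whenever representatives $x\in\ex_i$, $y\in\X_i$ can be chosen in a common index $i$ (possible by filteredness), and $b^\infty_{[x][y]}=0$ otherwise.

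Verifying that $\Sigma_\infty$ is a bona fide seed is the first technical step. Well-definedness of $B_\infty$ under change of representative follows from axiom CM2 applied to the one-step sequence $(x)$: pushing the exchange relation at $x$ through any $f_{ij}$ sending $x$ to an exchangeable of $\Sigma_j$ yields the exchange relation at $f_{ij}(x)$ in $\Sigma_j$, so comparing monomials forces $b^i_{xy}=b^j_{f_{ij}(x),f_{ij}(y)}$ whenever both images remain in $\X_j$. The axiom iMCM, by forbidding exchangeable variables from being specialised to $0$, prevents degenerate collapses in these monomial comparisons. Local finiteness of $B_\infty$ passes from that of each $B_i$, since the $[x]$-row of $B_\infty$ has no more non-zero entries than the $x$-row of $B_i$; skew-symmetrisability descends by coherently choosing diagonal weights via filteredness.

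Next I would construct canonical maps $\iota_i:\cA(\Sigma_i)\to\cA(\Sigma_\infty)$ on the initial cluster by $\iota_i(x)=[x]$ whenever $[x]\in\X_\infty$ exists and $\iota_i(x)=m\in\bZ$ whenever $x$ is eventually specialised to $m$ along the diagram (coherent by filteredness), and extend uniquely by the Laurent phenomenon. Axioms CM1, MCM, and iMCM follow directly from the construction. The crucial axiom CM2 is verified by a lifting argument: any $\iota_i$-biadmissible sequence $(x_1,\dots,x_\ell)$ has finite length, so filteredness provides an index $j\ge i$ at which the sequence is $f_{ij}$-biadmissible, and CM2 for $\iota_i$ then reduces to CM2 for $f_{ij}$ followed by $\iota_j$. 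The universal property is verified by the same pattern: a cocone $(g_i:\cA(\Sigma_i)\to\cA(\Sigma'))_i$ determines a unique $g:\cA(\Sigma_\infty)\to\cA(\Sigma')$ via $g([x])=g_i(x)$, well-defined by the cocone condition, and CM2 for $g$ follows via the analogous lifting to a common stage.

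The main obstacle is precisely this lifting of biadmissible sequences in $\Sigma_\infty$ back to a single finite stage $\Sigma_j$: mutation-equivariance relies both on the filteredness of $\cJ$, to find a common index making each step simultaneously biadmissible, and on the finite length of admissible sequences, so that only finitely many such index shifts need to be combined. The inducibility axiom iMCM plays a supporting but essential role throughout, since it is what forces the exchange matrices to behave coherently under pushforward and ensures that $B_\infty$ inherits a well-defined sign structure from the $B_i$.
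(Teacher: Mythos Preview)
Your overall architecture matches the paper's: reduce to constructing an explicit seed $\Sigma_\infty$ from the diagram, produce cocone maps, and verify universality by lifting finite admissible sequences back to a common finite stage. However, your definition of $B_\infty$ has a genuine gap.

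You claim that pushing the exchange relation at $x$ through $f_{ij}$ and comparing monomials forces $b^i_{xy}=b^j_{f_{ij}(x),f_{ij}(y)}$. This is false in general, for two reasons. First, the two monomials in the numerator of the exchange relation are interchangeable, so CM2 only pins down the image relation up to swapping them; a melting cluster morphism may introduce a global sign $\sigma_\alpha\in\{\pm1\}$ on each exchangeably connected component of the target (this is exactly Lemma~\ref{L:ws}(\ref{ws2}) in the paper, and is why isomorphism in $\mCl$ is governed by \emph{similarity} rather than equality of exchange matrices). Second, distinct frozen variables $y_1,y_2\in\X_i$ may be identified under $f_{ij}$, in which case the target entry is $\pm\sum_{f_{ij}(y)=y'}b^i_{xy}$ rather than any single $b^i_{xy}$; thus the value you assign depends on the index and the representative chosen. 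Your proposed $B_\infty$ is therefore not well-defined.

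The paper repairs precisely this: Lemma~\ref{L:well-defined-seed}(2) shows the absolute values $|b^k_{x_ky_k}|$ form a bounded nondecreasing family which eventually stabilises at some $\ell$, and that beyond $\ell$ the entries agree up to sign; part~(3) shows these signs are coherent across an exchangeably connected component. Definition~\ref{D:ind-seed} then fixes $B$ by choosing one sign-determining entry per component, yielding a seed well-defined up to strong similarity. Your sketch would need this stabilisation-and-sign-choice mechanism inserted in place of the incorrect equality claim; the rest of your outline (Lemma~\ref{L:ex-locally finite} for local finiteness, Proposition~\ref{P:cone} for the cocone, and the lifting argument of Corollary~\ref{C:big-admissible} and Theorem~\ref{T:directed_closed} for universality) is essentially what the paper does.
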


 \begin{proof}
    By \cite[Theorem~1]{AndrekaNemeti} the existence of directed colimits implies the existence of filtered colimits. The claim now follows from the explicit construction from Theorem \ref{T:directed_closed}, which is proved in Section \ref{S:construction}.
 \end{proof}

In Sections \ref{S:construction_seed} and \ref{S:construction}, we explicitly construct directed colimits in $\mCl$, which requires us to consider exchangeably connected components.

\subsection{Neighbours and exchangeably connected components}

Each seed consists of distinct exchangeably connected components.

\begin{definition}\label{D:exchangeably connected}
	Let $\Sigma=(\X,\ex,B)$ be a seed. Two cluster variables $x \in \ex$ and $y \in \X$ are called {\em $B$-neighbours} if $b_{xy} \neq 0$. For any subset $S \subseteq \X$ we write
\[
	N_{B}(S) = \{y \in \X \mid \text{$y$ is a $B$-neighbour of $z$ for some $z \in S$}\}.
\] 
If $x \in \X$, we willl usually just write $N_B(x)$ for $N_B(\{x\})$.
Two elements $v,w \in \X$ are called {\em $B$-exchangeably connected} if there exists a sequence $(x_i)_{i=0}^{n+1}$ with 
	\begin{itemize}
		\item $v = x_0$ and $w = x_{n+1}$;
		\item $x_1, \ldots, x_n \in \ex$;
		\item for all $i = 0, \ldots, n$ the consecutive elements $x_i$ and $x_{i+1}$ are $B$-neighbours.
	\end{itemize}
\end{definition}

\begin{remark} \rm
	Note that two frozen variables in $\X$ cannot be $B$-neighbours. However, they may still be exchangeably connected.
\end{remark}

Let $\Sigma = (\X,ex,B = (b_{vw})_{v,w \in \X})$ be a seed. For every $v \in \ex$ we obtain the unique {\em exchangeably connected component} $\X[v]$ of $\X$ consisting of all cluster variables in $\X$ which are exchangeably connected to $v$. Setting $\ex[\alpha] = \X[\alpha] \cap \ex$ we can decompose $\ex$ into its $B$-exchangeably connected components $\ex = \bigsqcup_{\alpha \in I} \ex[\alpha]$.
We call the seeds 
\[
    \Sigma_\alpha = (\X[\alpha], \ex[\alpha], B[\alpha] = (b_{vw})_{v,w \in \X[\alpha]})
\]
the {\em exchangeably connected components of $\Sigma$}. Note that if $v \in \X \setminus \ex$ is an isolated frozen cluster variable, that is, we have $b_{vw} = 0$ for all $w \in \X$, then it is not contained in any exchangeably connected component.

\begin{example}\rm
Consider the seed $\Sigma$ determined by its ice quiver
\begin{center}
\begin{tikzpicture}
	\node (1) at (0,0) {$x_1$};
	\node[draw] (2) at (1,1) {$x_2$};
	\node (3) at (1,-1) {$x_3$};
	\node[draw] (4) at (-1,-1) {$x_4$};
	
	\draw[->] (1) to (2);
	\draw[->] (1) to (3);
	\draw[->] (1) to (4);
	
	\draw[->, bend left] (2) to (3);
	\draw[->, bend right] (2) to (3);
	
	\draw[->] (3) to (4);

	\node (5) at (-1,0) {$x_5$};
	\node (6) at (-2,1) {$x_6$};
	
	\draw[->] (1) to (2);
	\draw[->] (1) to (3);
	\draw[->] (1) to (4);
	
	\draw[->, bend right] (2) to (5);
	\draw[->] (4) to (5);
	\draw[->] (5) to (6);

	\node (7) at (2,0) {$x_7$};
	\node (8) at (3,1) {$x_8$};
	
	\draw[->, bend left] (2) to (7);
	\draw[->] (7) to (8);
	\draw[->] (5) to (6);

	\node[draw] (9) at (3,-1) {$x_{9}$};
	\node (10) at (4,0) {$x_{10}$};
	\node[draw] (11) at (5,1) {$x_{11}$};
	
	\draw[->] (9) to (10);
	\draw[->] (10) to (11);
	
	\node[draw] (12) at (6,0) {$x_{12}$};
	
\end{tikzpicture}
\end{center}	
It has four exchangeably connected components, given by:
\begin{equation*}
\begin{tikzpicture}
	\node (1) at (0,0) {$x_1$};
	\node[draw] (2) at (1,1) {$x_2$};
	\node (3) at (1,-1) {$x_3$};
	\node[draw] (4) at (-1,-1) {$x_4$};
	
	\draw[->] (1) to (2);
	\draw[->] (1) to (3);
	\draw[->] (1) to (4);
	
	\draw[->, bend left] (2) to (3);
	\draw[->, bend right] (2) to (3);
	
	\draw[->] (3) to (4);
\end{tikzpicture},\quad
\begin{tikzpicture}
	\node[draw] (2) at (1,1) {$x_2$};
	\node[draw] (4) at (-1,-1) {$x_4$};
	
	\node (5) at (-1,0) {$x_5$};
	\node (6) at (-2,1) {$x_6$};

	\draw[->, bend right] (2) to (5);
	\draw[->] (4) to (5);
	\draw[->] (5) to (6);
\end{tikzpicture},\quad
\begin{tikzpicture}
	\node[draw] (2) at (1,1) {$x_2$};
	
	\node (7) at (2,0) {$x_7$};
	\node (8) at (3,1) {$x_8$};
	
	\draw[->, bend left] (2) to (7);
	\draw[->] (7) to (8);
	
\end{tikzpicture} \text{ and } \quad
\begin{tikzpicture}

	\node[draw] (9) at (3,-1) {$x_{9}$};
	\node (10) at (4,0) {$x_{10}$};
	\node[draw] (11) at (5,1) {$x_{11}$};
	
	\draw[->] (9) to (10);
	\draw[->] (10) to (11);
\end{tikzpicture}\;.
\end{equation*}
\end{example}

\begin{remark}\label{R:uncountable_rank}\rm
    Every rooted cluster algebra $\cA(\Sigma)$ can be realised, in $\mCl$, as a filtered colimit of its exchangeably connected components, and its isolated  frozens. In particular, every rooted cluster algebra can be realised in this manner as a colimit of countable rank rooted cluster algebras in $\mCl$, since by \cite[Remark~3.18]{Gcolimits} every exchangeably connected component is countable, and every isolated frozen component has finite rank $1$.
    
    Concretely, let the index set $I_1$ label the exchangeably connected components $\Sigma_\alpha = (\X[\alpha],\ex[\alpha],B \mid_{\X[\alpha]})$ of $\Sigma$, where $\alpha \in I_1$, and let the index set $I_2$ label its isolated frozen components $\Sigma_\beta = (\X_\beta =  \{x_\beta\}, \varnothing, \varnothing)$, where $\beta \in I_2$. Set $I = I_1 \sqcup I_2$ and consider the set $\cF(I) \subseteq 2^I$ of finite subsets of $I$. For $f \in \cF(I)$ we set $\Sigma_f = (\X_f = \bigcup_{\alpha \in f} \X[\alpha], \bigcup_{\alpha \in f} \ex[\alpha],B \mid_{\X_f})$. If $f \subseteq g \in F(I)$, the inclusion $\X_f \to \X_g$ induces a map $\cA(\Sigma_f) \to \cA(\Sigma_g)$ in $\mCl$. Hence we obtain a filtered diagram $\cF(I) \to \mCl$, of which $\cA(\Sigma)$ is the colimit.
\end{remark}

We can read off from the initial seeds whether two rooted  cluster algebras are isomorphic in $\mCl$. The key is the notion of {\em similarity}.

\begin{definition}\label{D:similar}[{\cite[Definition~3.24]{Gcolimits}}]
	Two seeds $\Sigma = (\X,\ex,B = (b_{uv})_{u,v \in \X})$ and $\Sigma'=(\X',\ex',B' = (b')_{u' v'})_{u',v' \in \X'}$ are called {\em similar}, if there exists a bijection $\varphi \colon \X \to \X'$ such that
	\begin{itemize}
			\item the map $\varphi$ restricts to a bijection $\varphi \colon \ex \to \ex'$;
			\item for every exchangeably connected component $\Sigma[\alpha] = (\X[\alpha],\ex[\alpha],B[\alpha])$ of $\Sigma$ there exists a $\sigma_\alpha \in \{+1,-1\}$ such that following holds: If $u \in \ex[\alpha]$ and $v \in \X[\alpha]$, then
			\[
				b_{uv} = \sigma_\alpha b'_{\varphi(u)\varphi(v)}. 
			\]
	\end{itemize}
	If the map $\varphi$ is the identity map, then we call the seeds {\em strongly similar}.
\end{definition}

\begin{proposition}\label{P:similar_iso}[{\cite[Theorem~3.25]{Gcolimits}}]
    Two rooted cluster algebras are isomorphic in $\mCl$ if and only if their initial seeds are similar.
\end{proposition}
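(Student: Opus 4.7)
The plan is to prove both implications. Assume first that $\Sigma$ and $\Sigma'$ are similar via a bijection $\varphi \colon \X \to \X'$ with component signs $(\sigma_\alpha)_\alpha$. The key preparatory lemma is that Fomin--Zelevinsky mutation is invariant under simultaneously flipping the signs of all exchange matrix entries $b_{uv}$ with $u,v$ lying in a single exchangeably connected component: inspection of \eqref{eqn:exchange relation} shows that the new cluster variable produced is \emph{unchanged} under this flip, since the two monomials of the exchange relation merely swap roles, while the matrix mutation rule of Definition \ref{D:mutation} propagates the flip to the resulting component. Granting this, an induction on the length of a $\Sigma$-admissible sequence shows that extending $\varphi$ to the ambient ring of fractions intertwines mutation on $\cA(\Sigma)$ with mutation on $\cA(\Sigma')$, hence sends cluster variables bijectively to cluster variables, producing a ring isomorphism satisfying CM1, CM2, and iMCM. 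Swapping the roles of $\Sigma$ and $\Sigma'$ yields a two-sided inverse in $\mCl$.

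For the other direction, suppose $f \colon \cA(\Sigma) \to \cA(\Sigma')$ is an isomorphism in $\mCl$ with inverse $g$. My first step is to show that $f$ restricts to a bijection $\varphi \colon \X \to \X'$ mapping $\ex$ onto $\ex'$. Since $g$ is a unital ring homomorphism, it fixes integers; hence $f(x) = n \in \bZ$ for some $x \in \X$ would give $x = gf(x) = n$, impossible as $x$ is an indeterminate. So CM1 forces $f(\X) \subseteq \X'$, and iMCM applied to both $f$ and $g$ matches $\ex$ to $\ex'$ bijectively. Next, for each $u \in \ex$, I apply the ring homomorphism $f$ to the exchange relation \eqref{eqn:exchange relation} defining $\mu_u(u)$ in $\Sigma$; axiom CM2 identifies $f(\mu_u(u))$ with $\mu_{\varphi(u)}(\varphi(u))$, so the result is an equation in $\bQ(\X')$ equating the exchange relation at $\varphi(u)$ in $\Sigma'$ with the $f$-image of the exchange relation at $u$ in $\Sigma$. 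Since $\X'$ is algebraically independent and the two monomials in \eqref{eqn:exchange relation} are coprime, matching sums of monomials yields a sign $\sigma_u \in \{+1,-1\}$ with $b'_{\varphi(u),\varphi(w)} = \sigma_u b_{u w}$ for all $w \in \X$.

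The remaining task is to show that $\sigma_u$ depends only on the exchangeably connected component containing $u$. If $u,v \in \ex$ are $B$-neighbours, skew-symmetrisability for $B$ and $B'$ yields $d_u b_{u v} = -b_{v u} d_v$ and $d'_{\varphi(u)} b'_{\varphi(u) \varphi(v)} = -b'_{\varphi(v) \varphi(u)} d'_{\varphi(v)}$; substituting the relations just established and cancelling $b_{u v} \neq 0$ gives $\sigma_u / \sigma_v = (d_u d'_{\varphi(v)}) / (d_v d'_{\varphi(u)})$. The right-hand side is a positive rational while the left is $\pm 1$, forcing $\sigma_u = \sigma_v$. Propagating along the connecting sequences from Definition \ref{D:exchangeably connected} yields a well-defined sign $\sigma_\alpha$ on each exchangeably connected component $\X[\alpha]$, and the matching $b'_{\varphi(u) \varphi(v)} = \sigma_\alpha b_{u v}$ extends to all $u \in \ex[\alpha]$ and $v \in \X[\alpha]$ (for frozen $v$, via its $B$-connection to an exchangeable in the same component).

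The main obstacle is this sign-propagation argument: one must exploit the rigidity that $\pm 1$ imposes on a positive rational identity to conclude $\sigma_u = \sigma_v$ rather than merely that $\sigma_u/\sigma_v$ is positive. The sufficiency direction, while formally direct, is cleanest when phrased through the component-wise sign-flip invariance of mutation; this might be worth isolating as a preparatory lemma before embarking on the main proof.
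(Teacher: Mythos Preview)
The paper does not supply its own proof of this proposition; it is imported verbatim from \cite[Theorem~3.25]{Gcolimits}. Your argument is correct on both directions.

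For the converse (isomorphism implies similarity), it is worth noting that the paper already records, as Lemma~\ref{L:ws}(\ref{ws2}), a more general sign-coherence statement valid for \emph{any} melting cluster morphism: on each exchangeably connected component of the target there is a uniform sign $\sigma_\alpha$ with $b'_{f(v)w'} = \sigma_\alpha \sum_{f(w)=w'} b_{vw}$. When $f$ is an isomorphism the sum collapses to a single term and similarity drops out immediately, so the original proof in \cite{Gcolimits} presumably specialises that lemma rather than running a separate skew-symmetrisability argument. Your route---extracting a per-vertex sign $\sigma_u$ from the exchange relation and then forcing $\sigma_u = \sigma_v$ for exchangeable neighbours via the positivity of the symmetrising constants $d_u, d_v, d'_{\varphi(u)}, d'_{\varphi(v)}$---is a clean alternative that avoids the extra machinery, at the cost of not generalising beyond isomorphisms. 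Both approaches ultimately hinge on the same monomial-matching step after applying CM2 to a single mutation.
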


\begin{remark}\rm
    Mutation at an exchangeable variable $x \in \ex$ only affects the exchangeably connected component containing $x$: If $\Sigma_\alpha$ is an exchangeably connected component of $\Sigma$ with $x \in \ex \setminus \ex[\alpha]$, then $\mu_x(\Sigma_\alpha) = \Sigma_\alpha$.
\end{remark}

\begin{lemma}\label{L:ws}[{\cite[Lemma~3.28]{Gcolimits}\cite[Proposition~3.8]{Gcolimits}}]
	Let $\Sigma = (\X,ex,B = (b_{vw})_{v,w \in \X})$ and $\Sigma'=(\X',\ex',B'= (b'_{vw})_{v,w \in \X'})$ be seeds. 
	Let $f \colon \X \to \X'$ be a melting cluster morphism. The following hold.
	\begin{enumerate}	
	\item\label{ws1} If $x,y \in \X$ with $x \neq y$ but $f(x) = f(y) \in \X'$, then $x,y \in \X \setminus \ex$ are frozen variables.
	\item\label{ws2} For every exchangeably connected component $\Sigma'_\alpha = (\X'[\alpha],\ex'[\alpha],(B')^\alpha)$ of $\Sigma'$ there exists a $\sigma_\alpha \in \{+,-\}$ such that the following holds: For every $v \in \ex$ with $f(v) \in \ex'[\alpha]$ and every $w' \in \X'$ we have
		\[
			b_{f(v)w'} = \sigma_\alpha \sum_{f(w)=w'} b_{vw}.
		\]
		Moreover, the summands in the sum are either all positive or all negative.
		\end{enumerate}
\end{lemma}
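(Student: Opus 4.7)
The plan is to prove Part (1) directly using axioms CM2 and iMCM, then to exploit Part (1) in order to collapse the cross-fibre sums that arise in Part (2).

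For Part (1), suppose $x \neq y$ with $f(x) = f(y) \in \X'$, and, seeking a contradiction, assume $x \in \ex$. By iMCM, $f(x) \in \ex' \cup (\bZ \setminus \{0\})$, and since $f(x) \in \X'$ we must have $f(x) \in \ex'$. Hence $(x)$ is $f$-biadmissible, and CM2 yields
\[
    f(y) \;=\; f(\mu_x(y)) \;=\; \mu_{f(x)}(f(y)) \;=\; \mu_{f(x)}(f(x)),
\]
the mutated variable at $f(x)$ in $\Sigma'$, which is distinct from $f(x)$ --- contradicting $f(y) = f(x)$.

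For Part (2), fix $v \in \ex$ with $f(v) \in \ex'[\alpha]$. Using CM2 for the $f$-biadmissible sequence $(v)$, we have $f(v') = \mu_{f(v)}(f(v))$, so applying $f$ to the exchange relation at $v$ and comparing with the exchange relation at $f(v)$ in $\Sigma'$ produces the identity
\[
    f(P_v^+) + f(P_v^-) \;=\; P_{f(v)}^+ + P_{f(v)}^-
\]
in $\cA(\Sigma') \subseteq \bQ(\X')$, where $P_v^\pm$ and $P_{f(v)}^\pm$ are the respective pairs of exchange monomials. Each $f(P_v^\pm)$ factors as $A^\pm M^\pm$ with $A^\pm \in \bZ$ the product of the integer-valued $f(w)$'s and $M^\pm = \prod_{w' \in \X'} (w')^{c^\pm_{w'}}$ a monomial in $\X'$, where $c^+_{w'} = \sum_{w:\, f(w) = w',\, b_{vw} > 0} b_{vw}$ and analogously for $c^-_{w'}$. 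Since $\X'$ is algebraically independent, the identity lives in $\bZ[\X']$ with its unique representation as a sum of monomials; the right-hand side has coefficient $1$ on two distinct monomials of disjoint support, so term-by-term matching forces $A^+ = A^- = 1$ and a sign $\sigma_v \in \{+, -\}$ such that either $(M^+, M^-) = (P_{f(v)}^+, P_{f(v)}^-)$ or $(M^+, M^-) = (P_{f(v)}^-, P_{f(v)}^+)$. Reading off exponents yields the identity $b'_{f(v), w'} = \sigma_v \sum_{f(w) = w'} b_{vw}$ for all $w' \in \X'$; the disjointness of the supports of $P_{f(v)}^\pm$ prevents $c^+_{w'}$ and $c^-_{w'}$ from being simultaneously positive, so all nonzero summands share a common sign.

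It remains to show that $\sigma_v$ depends only on the component of $f(v)$. By exchangeable connectedness of $\ex'[\alpha]$, it suffices to check $\sigma_{v_1} = \sigma_{v_2}$ when $f(v_1), f(v_2) \in \ex'[\alpha]$ are $B'$-neighbours. Here Part (1) is decisive: since $v_2 \in \ex$, its fibre $f^{-1}(f(v_2))$ is the singleton $\{v_2\}$, so the sum $\sum_{f(w) = f(v_2)} b_{v_1, w}$ collapses to the single term $b_{v_1, v_2}$, yielding $b'_{f(v_1), f(v_2)} = \sigma_{v_1} b_{v_1, v_2}$. By symmetry, $b'_{f(v_2), f(v_1)} = \sigma_{v_2} b_{v_2, v_1}$. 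Since $b'_{f(v_1), f(v_2)} \neq 0$, both $b_{v_1, v_2}$ and $b_{v_2, v_1}$ are nonzero and have opposite signs by skew-symmetrizability of $B$; the corresponding sign-reversal in $B'$ then forces $\sigma_{v_1} = \sigma_{v_2}$. The main technical subtlety I anticipate is the monomial-matching step above: when some $A^\pm$ vanishes (because a frozen $w$ is sent to $0$) or $f(v)$ is a source or sink in $\Sigma'$, the equation degenerates to a polynomial identity with fewer monomials on one or both sides, and a short case analysis is required to still extract the same formula.
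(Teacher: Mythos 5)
First, note that the paper does not prove this lemma itself: it is imported verbatim from \cite[Lemma~3.28, Proposition~3.8]{Gcolimits}, so there is no in-paper proof to compare against and your argument has to stand on its own. Your part (1) is correct (you invoke iMCM, which a general melting cluster morphism need not satisfy, but MCM together with the hypothesis $f(x)\in\X'$ already forces $f(x)\in\ex'$, so nothing is lost). The first half of your part (2) --- applying CM2 to the length-one sequence $(v)$, splitting $f$ of each exchange monomial into an integer factor and a monomial in $\X'$, and matching terms using algebraic independence of $\X'$ --- is essentially right, and is exactly the mechanism the paper itself uses in Lemma~\ref{L:specialisation of neighbours}; the degenerate cases you flag (zero or non-unit integer factors, empty products) are real but handleable. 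This establishes the displayed formula with a sign $\sigma_v$ that a priori depends on $v$, together with the common-sign statement for the summands.

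The gap is in the propagation of the sign. You reduce to the case where $f(v_1)$ and $f(v_2)$ are $B'$-neighbours ``by exchangeable connectedness of $\ex'[\alpha]$'', but that connectedness only supplies a path from $f(v_1)$ to $f(v_2)$ through exchangeable vertices of $\Sigma'$, and those intermediate vertices need not lie in $f(\ex)$: they may lie outside the image of $f$ entirely, or be images of \emph{frozen} variables of $\Sigma$ (melting is permitted). At such a vertex no sign is defined, so your chain of pairwise comparisons cannot be assembled. This is not a removable technicality. Take $\Sigma$ to be the quiver $v_1\to u\leftarrow v_2$ with $u$ frozen and $v_1,v_2$ exchangeable and mutually non-adjacent, $\Sigma'$ the linear quiver $a'\to b'\to c'$ with all three vertices exchangeable, and $f(v_1)=a'$, $f(u)=b'$, $f(v_2)=c'$. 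One checks that $f$ extends to a melting cluster morphism: the only cluster variables of $\cA(\Sigma)$ are $v_1,v_2,u,\tfrac{u+1}{v_1},\tfrac{u+1}{v_2}$, their images are cluster variables of $\cA(\Sigma')$, and since $b_{v_1v_2}=0=b'_{a'c'}$ the two exchangeable vertices never interact under any biadmissible sequence (no biadmissible sequence ever mutates at $b'$), so CM2 holds. Yet $b'_{a'b'}=+\,b_{v_1u}$ while $b'_{c'b'}=-\,b_{v_2u}$, so no single sign works for the component $\{a',b',c'\}$ of $\Sigma'$. Hence the target-indexed statement you are proving is itself problematic, and any local neighbour-to-neighbour argument must break exactly where yours does; the sign can only be propagated along paths consisting of vertices at which the per-vertex sign is actually defined (e.g.\ along exchangeably connected components of the \emph{source}, where your part-(1)-plus-skew-symmetrisability comparison is the right idea, though you would still need to handle intermediate vertices specialised to integers).
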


We conclude the section with an observation on possible specialisations of neighbours of exchangeable cluster variables: Neighbours of $x \in \ex$ with $f(x) \in \ex'$ can only be specialised to either $1$ or $-1$, and the product of all of their images is $1$.

\begin{lemma}\label{L:specialisation of neighbours}
	Let $f \colon \cA(\Sigma) \to \cA(\Sigma')$ be a melting cluster morphism, where $\Sigma = (\X,\ex,B = (b_{uv}))$ and $\Sigma' = (\X',\ex',B' = (b'_{uv}))$. Denote by $\X_\bZ = f^{-1}(\bZ) \cap \X$ the initial cluster variables which get sent to integers. If $x \in \ex$ with $f(x) \in \ex'$, then 
	\[
		f(\prod_{y \in {\X_\bZ} \colon b_{xy}>0} y^{b_{xy}}) = 1 = f(\prod_{y \in {\X_\bZ} \colon b_{xy}<0} y^{-b_{xy}}).
	\]
	In particular, setting $\X_{cl} = \X \setminus \X_{\bZ} =\{x \in \X \mid f(x) \in \X'\}$, we have
	\[
		f(\mu_x(x)) = \frac{\prod_{u \in \X_{cl} \colon b_{xu}>0}u^{b_{xu}} + \prod_{v \in \X_{cl} \colon b_{xv}<0}v^{-b_{xv}}}{x}.
	\]
\end{lemma}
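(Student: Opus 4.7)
The plan is to apply axiom CM2 to the biadmissible sequence $(x)$ and then compare two different expansions of $f(\mu_x(x))$. Since $x \in \ex$ and $f(x) \in \ex'$, the length-one sequence $(x)$ is $f$-biadmissible, so CM2 yields
\[
    f(\mu_x(x)) = \mu_{f(x)}(f(x)).
\]

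First I would compute the left-hand side by applying $f$ to the exchange relation (\ref{eqn:exchange relation}) and splitting each of the two numerator products into contributions from $\X_{cl}$ and from $\X_\bZ$. Writing
\[
    A = \prod_{u \in \X_{cl}: b_{xu} > 0} f(u)^{b_{xu}}, \qquad B = \prod_{v \in \X_{cl}: b_{xv} < 0} f(v)^{-b_{xv}},
\]
and denoting by $P_+, P_-$ the two $f$-images appearing in the statement of the lemma, this produces
\[
    f(\mu_x(x)) = \frac{P_+ A + P_- B}{f(x)}.
\]

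Next I would expand the exchange relation for $\mu_{f(x)}(f(x))$ directly in $\Sigma'$, using Lemma~\ref{L:ws}(\ref{ws2}): for the exchangeably connected component $\alpha$ of $\Sigma'$ containing $f(x)$ there is a sign $\sigma_\alpha \in \{+1,-1\}$ such that $b'_{f(x)u'} = \sigma_\alpha \sum_{f(w) = u'} b_{xw}$ for every $u' \in \X'$, with all nonzero summands sharing a common sign. This lets me rewrite each factor $u'^{b'_{f(x)u'}}$ in the exchange relation as $\prod_{w \in \X_{cl}: f(w) = u'} f(w)^{b_{xw}}$ (the global sign $\sigma_\alpha$ only swaps the roles of the two numerator monomials). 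Collecting over all $u' \in \X'$ yields
\[
    \mu_{f(x)}(f(x)) = \frac{A + B}{f(x)},
\]
and equating this with the expression above gives the identity $(P_+ - 1) A = (1 - P_-) B$ in $\cA(\Sigma')$.

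The crucial remaining step is to show that $A$ and $B$ are linearly independent over $\bQ$ inside $\bQ(\X')$, for then the identity forces $P_+ = 1 = P_-$. Combining Lemma~\ref{L:ws}(\ref{ws1}) with the same-sign portion of Lemma~\ref{L:ws}(\ref{ws2}), if $u, v \in \X_{cl}$ satisfied $f(u) = f(v)$ with $b_{xu} > 0$ and $b_{xv} < 0$, then $\sum_{f(w) = f(u)} b_{xw}$ would contain summands of opposite signs, a contradiction. Hence $A$ and $B$ are monomials in disjoint subsets of the algebraically independent generators $\X'$, so they are linearly independent in $\bQ(\X')$. The ``in particular'' formula follows by substituting $P_+ = P_- = 1$ into the first expression for $f(\mu_x(x))$. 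The main obstacle I anticipate is the degenerate boundary case where both $A$ and $B$ reduce to the empty product---when $x$ has no cluster neighbours in $\X_{cl}$---since then linear independence fails and the identity only delivers $P_+ + P_- = 2$; here one needs either an extra structural ingredient, or an independent argument using longer biadmissible sequences, to separately pin down $P_+$ and $P_-$.
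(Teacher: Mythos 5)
Your argument is essentially the paper's own: both apply CM2 to the length-one biadmissible sequence $(x)$, equate the two resulting expressions for $f(\mu_x(x))$, and exploit the fact that the numerator of $\mu_{f(x)}(f(x))$ is a sum of two coprime monomials in the algebraically independent set $\X'$ to match terms and force the integer factors to be $1$. The only difference is cosmetic: you identify those two monomials with $A$ and $B$ via (\ref{ws2}) of Lemma~\ref{L:ws}, whereas the paper matches them directly against $f\bigl(\prod_{b_{xy}>0} f(y)^{b_{xy}}\bigr)$ and $f\bigl(\prod_{b_{xy}<0} f(y)^{-b_{xy}}\bigr)$ by coprimality and concludes that each of these is a monic monomial. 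The content is the same.

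The boundary case you flag at the end is genuine, and the paper's proof does not address it either. If every $B$-neighbour of $x$ lies in $\X_\bZ$, then $A=B=1$, the two monomials coincide, and the comparison only yields $P_+ + P_- = 2$. This is not merely a gap in the argument but in the statement: take $\Sigma$ with cluster $\{x,y_1,y_2\}$, $\ex=\{x\}$, $b_{xy_1}=1=-b_{xy_2}$, and $\Sigma'$ a single isolated exchangeable vertex $z$; the assignment $x\mapsto z$, $y_1\mapsto 3$, $y_2\mapsto -1$ extends to an inducible melting cluster morphism (the only biadmissible sequences alternate between $x$ and $\mu_x(x)$, and CM2 is verified directly since $f(\mu_x(x))=\frac{3-1}{z}=\mu_z(z)$), yet $P_+=3$ and $P_-=-1$. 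Note, however, that the ``in particular'' formula---the only part of the lemma actually invoked in Proposition~\ref{P:cone} and Theorem~\ref{T:compact}---needs only $P_+A+P_-B=A+B$, and this does survive the degenerate case. So your proof is as complete as the paper's, and your instinct that the first identity requires either an extra hypothesis (some neighbour of $x$ landing in $\X'$) or a weakened conclusion in the boundary case is correct.
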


\begin{proof}
	Set $f(x) =x'$. Since the sequence $(x)$ is $f$-biadmissible, we have
	\begin{eqnarray*}
		 \frac{\prod_{y \in \X \colon b_{xy}>0}f(y)^{b_{xy}} + \prod_{y \in \X \colon b_{xy}<0} f(y)^{-b_{xy}}}{f(x)} &=& f(\mu_x(x)) =
		\mu_{f(x)}(f(x))
		\\
		&=& \frac{\prod_{y' \in \X' \colon b'_{x'y'}>0}{y'}^{b'_{x'y'}} + \prod_{y' \in \X' \colon b'_{x'y'}<0} {y'}^{-b'_{x'y'}}}{f(x)} 
	\end{eqnarray*}
	Since $\prod_{y' \in \X' \colon b'_{x'y'}>0}{y'}^{b'_{x'y'}}$ and $\prod_{y' \in \X' \colon b'_{x'y'}<0} {y'}^{-b'_{x'y'}}$ are coprime, by algebraic independence of $\X'$ we must have
	\[
		\prod_{y \in \X \colon b_{xy}>0}f(y)^{b_{xy}}  = \prod_{y' \in \X' \colon b'_{x'y'}>0}{y'}^{b'_{x'y'}} \; \text{ and } \prod_{y \in \X \colon b_{xy}<0} f(y)^{-b_{xy}} = \prod_{y' \in \X' \colon b'_{x'y'}<0} {y'}^{-b'_{x'y'}}
	\]
	or vice versa. Hence, all of these are monic monomials; the claim follows.
\end{proof}

\subsection{Construction of ind-seeds}\label{S:construction_seed}

Throughout this section, we fix a directed system $F \colon \cF \to \mCl$. For all $i \in \cF$ set $F(i) = \cA(\Sigma_i)$ with seed $\Sigma_i = (\X_i,\ex_i,B^i = (b_{vw}^i)_{v,w \in \X_i})$, and for a morphism $i \leq j$ in $\cF$ set $F(i \leq j) = f_{ij}$.

\begin{definition}
The {\em initial cluster functor $\bX$} is the functor $\bX \colon \mCl \to \mathrm{Set}$ which assigns to each rooted cluster algebra 
$\cA((\X,\ex,B))$ the set $\X \cup \bZ$, and to each melting cluster morphism $f \colon \cA((\X,\ex,B)) \to \cA((\X',\ex',B'))$ its restriction $f \mid_{\X \cup \bZ} \colon \X \cup \bZ \to \X' \cup \bZ$. 

Similarly, the {\em exchangeable initial cluster functor $\bE$} is the functor which assigns to each rooted cluster algebra $\cA((\X,\ex,B))$ the set $\ex \cup \bZ$, and to each melting cluster morphism $f \colon \cA((\X,\ex,B)) \to \cA((\X',\ex',B'))$ its restriction $f \mid_{\ex \cup \bZ} \colon \ex \cup \bZ \to \ex' \cup \bZ$.
\end{definition}
We will use these functors to define seeds for ind-cluster algebras. Their clusters and exchangeable variables can be computed using colimits in the category of sets.
\begin{remark}\label{R:ind-cluster}\rm
	We want to consider the sets $\X = (\colim \bX \circ F) \setminus \bZ$ and $\ex = (\colim \bE \circ F) \setminus \bZ$. These are computed via colimits in the category of sets. We remind the reader that this means that 
\[
	\X  \cong \left(\bigsqcup_{i \in \cF} (\X_i \cup \bZ) \big/ \sim \right) \setminus \bZ,
\] 
where for $x \in \X_i$ and $y \in \X_j$ we have $x \sim y$ if and only if $f_{ik}(x) = f_{jk}(y)$ for some $k \in \cF$ with $i,j \leq k$. Setting $\tilde{X}_i = \{x \in \X_i \mid f_{ij}(x) \in \X_j \text{ for all } j \geq i\}$, we can also write
\[
	\X \cong \bigsqcup_{i \in \cF} \tilde{X}_i \big/ \sim
\]
  Similarly, we have 
\[
	\ex \cong  \left(\bigsqcup_{i \in \cF} (\ex_i \cap \tilde{X}_i) \big/ \sim \right) \; \subseteq \X.
\]
For every $k \in \cF$, the colimit map $f_k \colon \X_k \to \X$ restricts to the colimit map $f_k \colon \ex_k \to \ex$.
\end{remark}

The sets $\X$ and $\ex$ from Remark \ref{R:ind-cluster} will give us the cluster, and set of exchangeable variables respectively, for the seed of an ind-cluster algebra. Defining the exchange matrix of this seed requires a bit more finesse, and we set this up via a series of lemmata below.

\begin{lemma}\label{L:well-defined-seed}
	Set $\X = (\colim \bX \circ F) \setminus \bZ$ and $\ex = (\colim \bE \circ F) \setminus \bZ$ with colimit maps $f_k \colon \X_k \to \X \cup \bZ$. The following hold:
	\begin{enumerate}
		\item\label{well-defined-seed1} For all $x \in \ex$ there exists an $\ell \in \cF$ such that for all $k \geq \ell$ there exists a unique $x_k \in \X_k$ with $f_k(x_k) = x$. Moreover, we have $x_k \in \ex_k$.
		\item\label{well-defined-seed2}{Let $x \in \ex, y \in \X$. One of the following holds:
		\begin{itemize}
			\item For all $k \in \cF$ and all $x_k,y_k \in \X_k$ with $f_k(x_k) = x$ and $f_k(y_k) = y$ we have
			\[
				b^k_{x_ky_k} = 0.
			\]
			In this case, we set $\tilde{b}_{xy} = 0$, and we say that $\tilde{b}_{xy}$ is {\em attained at every $k \in \cF$}.
			\item There exists an $\ell \in \cF$ such that for all $k \geq \ell$ there exists a unique $x_k \in \ex_k$ and a unique $y_k \in \X_k$ with $f_k(x_k) = x$ and $f_k(y_k) = y$ and $b^k_{x_ky_k} \neq 0$. Moreover, we have
			\[
				b^k_{x_ky_k} = \pm b^\ell_{x_\ell y_\ell}
			\]
			and for all $j \geq k \geq \ell$ we have $f_{kj}(x_k) = x_j$ and $f_{kj}(y_k)=y_j$.
			In this case, we set $\tilde{b}_{xy} = |b_{x_\ell y_\ell}|$ and say $\tilde{b}_{xy}$ is {\em attained at $\ell$ by $(x_\ell, y_\ell)$.}
		\end{itemize}
		}

		\item\label{well-defined-seed3}{Let $\X_\alpha$ be a $\tilde{B}$-exchangeably connected component and let $u,x \in \X_\alpha \cap \ex$ and $v,y \in \X_\alpha$ with $\tilde{b}_{uv} > 0$ and $\tilde{b}_{xy} > 0$, both attained at all $k \geq \ell \in \cF$ by $(u_k,v_k)$ and $(x_k,y_k)$ respectively.  Then there exists an $\ell' \geq \ell$ such that  either for all $k \geq \ell'$
		\[
			(b^k_{x_ky_k}) \cdot (b^k_{u_kv_k}) > 0 
		\]
 or for all $k \geq \ell'$ 
	\[
		(b^k_{x_ky_k}) \cdot (b^k_{u_kv_k}) <0.
	\]
 If the product is always positive, we say that  the pairs $(u,v)$ and $(x,y)$ are {\em positively aligned} and if it is always negative, we say that the pairs $(u,v)$ and $(x,y)$ are {\em negatively aligned}.}
	\end{enumerate}
\end{lemma}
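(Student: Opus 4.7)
The plan is to prove the three parts in order, with part (2) carrying most of the technical weight and parts (1) and (3) building on it.

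For part (\ref{well-defined-seed1}), since $x\in\ex = (\colim\bE\circ F)\setminus\bZ$, one can pick a representative $x_\ell\in\ex_\ell$ with $x_\ell\notin\bZ$ and set $x_k := f_{\ell k}(x_\ell)$ for all $k\geq\ell$. By the iMCM axiom, $f_{\ell k}(x_\ell)\in\ex_k\cup(\bZ\setminus\{0\})$; the possibility $x_k\in\bZ$ would force $x=f_k(x_k)\in\bZ$ in the colimit, contradicting $x\in\ex$. Hence $x_k\in\ex_k$. For uniqueness, any other $x_k'\in\X_k$ with $f_k(x_k')=x$ would satisfy $f_{kk'}(x_k)=f_{kk'}(x_k')$ for some $k'\geq k$ by the description of colimits in $\mathrm{Set}$; Lemma~\ref{L:ws}(\ref{ws1}) applied to $f_{kk'}$ then forces $x_k,x_k'$ to both be frozen, contradicting $x_k\in\ex_k$.

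For part (\ref{well-defined-seed2}), the key input is Lemma~\ref{L:ws}(\ref{ws2}), which for the morphism $f_{\ell k}$ provides a sign $\sigma_\alpha^k$ (indexed by the exchangeably connected component of $x_k$ in $\Sigma_k$) such that
\[
    b^k_{x_k y_k} \;=\; \sigma_\alpha^k \sum_{w\in\X_\ell \,:\, f_{\ell k}(w)=y_k} b^\ell_{x_\ell w},
\]
with all summands of a common sign. Since $B^\ell$ is $\ex$-locally finite, the neighbour set $N := \{w\in\X_\ell:b^\ell_{x_\ell w}\neq 0\}$ is finite. If $f_\ell(w)\neq y$ in the colimit for every $w\in N$, then no $w\in N$ can satisfy $f_{\ell k}(w)=y_k$ for any $y_k$ with $f_k(y_k)=y$, so the sum vanishes for all $k$ and all candidate $y_k$, giving $\tilde b_{xy}=0$. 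Otherwise the set $N_y := \{w\in N : f_\ell(w)=y\}$ is a non-empty finite set whose elements all become equal in $\X_k$ for $k$ large enough (pick $k$ beyond finitely many colimit identifications). For such $k$, all $w\in N_y$ share a common image $y_k\in\X_k$, the sum reduces to $\sum_{w\in N_y} b^\ell_{x_\ell w}$ — a constant independent of $k\geq\ell$ — and Lemma~\ref{L:ws}(\ref{ws1}) plus the non-vanishing of this sum ensures $y_k$ is the unique preimage of $y$ giving a non-zero entry.

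For part (\ref{well-defined-seed3}), fix a path $u=z_0,z_1,\ldots,z_{n+1}=x$ in $\ex$ witnessing exchangeable connectedness in $\tilde B$, with each consecutive $\tilde b_{z_i z_{i+1}}\neq 0$. Applying part (\ref{well-defined-seed1}) to each $z_i$ and part (\ref{well-defined-seed2}) to each consecutive pair yields a common threshold $\ell'\geq\ell$ past which the preimages $z_i^k\in\ex_k$ exist uniquely and all consecutive entries $b^k_{z_i^k z_{i+1}^k}$ are non-zero. Hence $u_k$ and $x_k$ lie in the same $B^k$-exchangeably connected component of $\Sigma_k$ for all $k\geq\ell'$. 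The signs $\sigma_\alpha^k$ provided by Lemma~\ref{L:ws}(\ref{ws2}) therefore coincide for the two pairs, and by part (\ref{well-defined-seed2}) the signs of the stabilised sums themselves are fixed at stage $\ell'$; thus the product $(b^k_{u_k v_k})(b^k_{x_k y_k})$ equals $(\sigma_\alpha^k)^2$ times a constant and has a fixed sign for all $k\geq\ell'$.

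The main obstacle is the coordinated stabilisation in part (\ref{well-defined-seed2}): one must simultaneously push several preimages $w\in N$ into agreement in $\X_k$, keep the resulting finite sum from cancelling, and combine Lemma~\ref{L:ws}(\ref{ws2}) (to control what the entries can be) with Lemma~\ref{L:ws}(\ref{ws1}) (to preclude spurious extra non-zero preimages $y_k'$). Once this is handled, parts (\ref{well-defined-seed1}) and (\ref{well-defined-seed3}) are comparatively formal consequences.
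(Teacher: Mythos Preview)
Your argument is essentially correct and, for parts~(\ref{well-defined-seed1}) and~(\ref{well-defined-seed3}), matches the paper's proof. For part~(\ref{well-defined-seed2}) you take a genuinely different route: the paper observes that the values $|b^k_{x_ky_k}|$ are bounded above by $\sum_{z}|b^m_{x_m z}|$ and then picks an $\ell$ where the \emph{maximum} is attained, whence the sum formula forces $|b^k_{x_ky_k}|=|b^\ell_{x_\ell y_\ell}|$ for all $k\geq\ell$. You instead fix $\ell$ once, coalesce the finitely many neighbours $w\in N_y$ of $x_\ell$ mapping to $y$, and read off the stabilised value as $\sum_{w\in N_y} b^\ell_{x_\ell w}$. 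Your version is more constructive (it says what the limit is in terms of data at the starting level) and avoids the maximisation trick; the paper's version is slightly slicker in that it does not need to name $N_y$ explicitly. Both rely on Lemma~\ref{L:ws}(\ref{ws2}) in the same way.

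One small gap: in your case $N_y=\varnothing$, the displayed formula for $b^k_{x_ky_k}$ comes from applying Lemma~\ref{L:ws}(\ref{ws2}) to $f_{\ell k}$ and hence only covers $k\geq\ell$. To conclude the first bullet of the statement (which quantifies over \emph{all} $k\in\cF$), you still need a directedness step: for arbitrary $m$ and $x_m,y_m$ with $b^m_{x_my_m}\neq 0$, pass to $k\geq m,\ell$ and use Lemma~\ref{L:ws}(\ref{ws2}) for $f_{mk}$ to get $b^k_{x_k\,f_{mk}(y_m)}\neq 0$, contradicting what you already showed at level $k$. Also, your appeal to Lemma~\ref{L:ws}(\ref{ws1}) for the uniqueness of $y_k$ is superfluous; uniqueness follows directly from the sum formula, since any competing $y_k'$ with $b^k_{x_ky_k'}\neq 0$ must receive a non-zero contribution from some $w\in N_y$, and all such $w$ share the common image $y_k$.
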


\begin{proof}
	Let $x \in \ex$. Then there exists some $\ell \in \cF$ and $x_\ell \in \ex_\ell$ such that $f_\ell(x_\ell) = x$. For all $k \geq \ell$ we have $x_k = f_{\ell k}(x_\ell) \in \ex_k \cup \bZ$, since $f_{\ell k}$ is a melting cluster morphism. Since $f_k(x_k) = x \in \ex$, we must have $x_k \in \ex_k$. Moreover, if $x'_k \in \X_k$ with $f_k(x'_k) = x$ then there exists a $m \geq k$ such that $f_{km}(x'_k) = f_{km}(x_k)$. By Lemma \ref{L:ws} we have $x'_k = x_k$. This proves the first claim.
	
	Let $x \in \ex, y \in \X$. Assume that there exists some $m \in \cF$ with $x_m, y_m \in \X_m$, $f_m(x_m)=x$, $f_m(y_m)=y$ and $b^m_{x_my_m} \neq 0$. For all $k \geq m$ set $x_k = f_{mk}(x_m) \in \X_k$ and $y_k = f_{mk}(y_m) \in \X_k$. By Lemma \ref{L:ws} we have
	\[
		0 < |b^m_{x_m y_m}| \leq  |b^k_{x_ky_k}| = \sum_{y'_m \in \X_m \colon f_{mk}(y'_m)=y_k} |b^m_{x_my'_m}| \leq  \sum_{z \in \X_m}|b^m_{x_mz}| = \alpha.
	\]
Hence the non-empty set $\{|b^k_{x_ky_k}| \mid k \geq m \}$ is bounded above by $\alpha$, and has a maximum. Assume the maximum is attained by $|b^\ell_{x_\ell y_\ell}| > 0$ for some $\ell \geq m$. For all $k \geq \ell$ we have
\[
	|b^\ell_{x_\ell y_\ell}| \geq |b^k_{x_k y_k}| = \sum_{y'_\ell \in \X_\ell \colon f_{\ell k}(y'_\ell) = y_k} |b^\ell_{x_\ell y'_\ell}| \geq |b^\ell_{x_\ell y_\ell}|
\]
and hence
\[
	b^k_{x_k y_k} = \pm b^\ell_{x_\ell y_\ell} \neq 0.
\]
By the first part, we can choose $\ell \in \cF$ big enough such that for all $k \geq \ell$ the element $x_k \in \ex_k$ is the unique cluster variable in $\X_k$ such that $f_k(x_k) = x$. For $k \geq \ell$ assume that $y_k \neq y'_k \in \X_k$ is such that $f_k(y'_k) = y$. Then there exists an $m \geq k$ such that
\[
	f_{km}(y'_k) = y_m
\]
and hence
\[
	|b^k_{x_k y_k}| = |b^m_{x_m y_m}| = \sum_{z \in \X_k \colon f_{km}(z) = y_m} |b^k_{x_k z}| \geq |b^k_{x_k y_k}| + |b^k_{x_k y'_k}|.
\]
Hence $b^k_{x_ky'_k} = 0$. This proves the second claim.

For the third claim, observe that since $u,v,x,y \in \X$ are $\tilde{B}$-exchangeably connected, by the second claim there exists an $\ell' \geq \ell$ such that for all $k \geq \ell'$, $u_k, v_k, x_k$ and $y_k$ are $B^k$-exchangeably connected. By (\ref{ws2}) of Lemma \ref{L:ws}, the entries $b^{\ell'}_{u_{\ell'} v_{\ell'}}$ and $b^{\ell'}_{x_{\ell'} y_{\ell'}}$ have the same sign if and only if $b^{k}_{u_k v_k}$ and $b^{k}_{x_k y_k}$ have the same sign for all $k \geq \ell'$. 
\end{proof}

We are now ready to define the candidate for a seed of an ind-cluster algebra.

\begin{definition}\label{D:ind-seed}
	We define the {\em ind-seed associated to $F \colon \cF \to \mCl$} to be the triple $\Sigma(F)=(\X,\ex,B)$ given by the following data.
	\begin{itemize}
			\item{ The {\em cluster $\X$} is the set $\X = (\colim \bX \circ F) \setminus \bZ$.}
			\item{The {\em set of exchangeable variables $\ex$} is the set $\ex = (\colim \bE \circ F) \setminus \bZ$.}
			\item{Consider the matrix $\tilde{B}=(\tilde{b}_{vw})_{v,w \in \X}$ defined as in Lemma \ref{L:well-defined-seed}(2). For each $\tilde{B}$-exchangeably connected component $\X[\alpha]$ of $\X$ with $|\X[\alpha]| \geq 2$, we pick a {\em sign-determining entry $\tilde{b}_{xy} >0$ of $\tilde{B}$}. For every $u \in \ex,v \in \X$ we set
			\[
				b_{uv} = \begin{cases}
									\tilde{b}_{uv} & \text{if $(u,v)$ and $(x,y)$ are positively aligned}\\
			-\tilde{b}_{uv} & \text{if $(u,v)$ and $(x,y)$ are negatively aligned}		\\
			0 & \text{if $\tilde{b}_{uv} = 0$}.				
								\end{cases}
			\]
			This defines the {\em exchange matrix $B = (b_{uv})_{u \in \ex,v \in \X}$ associated to $F$}. By Lemma \ref{L:well-defined-seed} it is well-defined.
			}
	\end{itemize}
\end{definition}

\begin{remark}\rm
	Definition \ref{D:ind-seed} relies on a choice of sign-determining entry for each $\tilde{B}$-exchangeably connected component with at least two elements. The signs of the entries in the exchange matrix $B$ are dependent on this choice. A different choice will lead to a seed $\tilde{\Sigma}$ which is strongly similar to $\Sigma(F)$, and hence to a cluster algebra $\cA(\tilde{\Sigma})$ which is canonically isomorphic in $\mCl$ to $\cA(\Sigma(F))$.
\end{remark}

We need the following stabilising property for entries in the exchange matrices to prove that $\mCl$ is closed under directed colimits.

\begin{lemma}\label{L:attain finite submatrix}
	Consider the ind-seed $\Sigma(F) = (\X,\ex,B)$ and let $\tilde{\X} \subseteq \X$ be a finite subset, with $\tilde{\ex} = \tilde{\X} \cap \ex$ labelling the rows and $\tilde{\X}$ labelling the columns of a finite submatrix  $\tilde{B}$ of $B$. Then there exists an $\ell \in \cF$ such that for all $i \geq \ell$ and all $y \in \tilde{\X}$ the following holds:
\begin{enumerate}
\item{If $y \in \ex$ there exists a unique $y_i \in \X_i$ with $f_i(y_i) = y$.}
 \item{If $y \in \X \setminus \ex$, and $N_{\tilde{B}}(y) \cap \ex \neq \varnothing$ there exists a unique $y_i$ such that for all $x \in N_{\tilde{B}}(x) \cap \ex$ we have
\[
	|b_{xy}| = |b^i_{x_i y_i}|.
\]
}
\item{ If $y \in \X \setminus \ex$ and $N_{\tilde{B}}(y) \cap \ex = \varnothing$, there exists some $y_i \in \X_i$, not necessarily unique, such that $f_i(y_i) = y$. In this case, for all $x \in \tilde{\X} \cap \ex$ we have 
\[
	b_{xy} = b^i_{x_i y_i} =0.
\]
}
\end{enumerate} 
We say that the submatrix $\tilde{B}$ is {\em uniformly attained at $\ell$}, and that {\em $y$ is $\tilde{B}$-attained by $y_i$}.
\end{lemma}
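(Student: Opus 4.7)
The plan is to invoke Lemma \ref{L:well-defined-seed} separately on each of the finitely many elements of $\tilde{\X}$ and each of the finitely many nonzero entries of $\tilde{B}$, and then take a common upper bound in $\cF$. The only ingredient beyond this bookkeeping is to coalesce several a priori different lifts of a single frozen element $y$ into a uniform one; this is handled using the set-theoretic description of the filtered colimit from Remark \ref{R:ind-cluster}.

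First, for each $y \in \tilde{\X} \cap \ex$, part (\ref{well-defined-seed1}) of Lemma \ref{L:well-defined-seed} provides a level $\ell_y \in \cF$ such that the required unique exchangeable lift $y_i \in \ex_i$ exists for all $i \geq \ell_y$, yielding (1). For each pair $(x,y) \in \tilde{\ex} \times \tilde{\X}$ with $b_{xy} \neq 0$, part (\ref{well-defined-seed2}) gives an index $\ell_{xy}$ and a unique lift $y_i^{(x)}$ for $i \geq \ell_{xy}$ with $|b^i_{x_i y_i^{(x)}}| = |b_{xy}|$, and by construction these lifts are compatible in the system, i.e.\ $y_j^{(x)} = f_{ij}(y_i^{(x)})$ for $j \geq i \geq \ell_{xy}$. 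For $y \in \tilde{\X} \setminus \ex$ with $N_{\tilde{B}}(y) \cap \ex = \varnothing$ no special care is needed: every entry $b_{xy}$ with $x \in \tilde{\ex}$ is zero and attained at every level by the first bullet of part (\ref{well-defined-seed2}), so any preimage $y_i$ of $y$ (which exists from some $\ell_y$ onwards) fulfils (3).

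The main obstacle is the case $y \in \tilde{\X} \setminus \ex$ with $N_{\tilde{B}}(y) \cap \ex \neq \varnothing$, where the lifts $y_i^{(x)}$ produced in the previous step may depend on the choice of $x \in N_{\tilde{B}}(y) \cap \ex$. Let $i_0 \in \cF$ be an upper bound of the finitely many $\ell_{xy}$ with $x \in N_{\tilde{B}}(y) \cap \ex$. The elements $\{y_{i_0}^{(x)}\}_x$ form a finite family of preimages of the single colimit element $y$ under $f_{i_0}$, so by Remark \ref{R:ind-cluster} any two of them agree after passing to some level in $\cF$; since the family is finite and $\cF$ is directed, there exists a single level $k_y \geq i_0$ at which $f_{i_0, k_y}$ identifies the whole family to one element $y_{k_y} \in \X_{k_y}$. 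Compatibility of the lifts then forces $y_j^{(x)} = f_{k_y, j}(y_{k_y})$ for all $j \geq k_y$ and all $x \in N_{\tilde{B}}(y) \cap \ex$, giving the uniform lift required by (2); uniqueness of this lift at level $j$ follows from the uniqueness clause in part (\ref{well-defined-seed2}) of Lemma \ref{L:well-defined-seed}. Taking $\ell \in \cF$ to be an upper bound of all the finitely many indices $\ell_y$, $\ell_{xy}$ and $k_y$ constructed above, which exists by directedness of $\cF$, yields the index claimed in the lemma.
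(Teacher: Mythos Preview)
Your proof is correct and follows essentially the same approach as the paper: both arguments handle (1) and (3) directly via Lemma~\ref{L:well-defined-seed}, and for (2) both obtain, for each exchangeable neighbour $x$ of a frozen $y$, a lift $y_i^{(x)}$ from part~(\ref{well-defined-seed2}), and then coalesce this finite family of preimages of the single colimit element $y$ at a higher level of $\cF$. The only cosmetic difference is that the paper processes the frozen variables one at a time and verifies the equality $|b^j_{x_j y_j}| = |b_{xy}|$ after coalescing via an explicit computation with Lemma~\ref{L:ws}, whereas you take all upper bounds at once and deduce the same equality directly from the compatibility clause $f_{kj}(y_k)=y_j$ in Lemma~\ref{L:well-defined-seed}(\ref{well-defined-seed2}).
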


\begin{proof}
Claim (1) is (\ref{well-defined-seed1}) of Lemma \ref{L:well-defined-seed} repeated for notational purposes. Claim (3) follows immediately from the construction of $B$.
It remains to show Claim (2). Observe that by (\ref{well-defined-seed2}) of Lemma \ref{L:well-defined-seed} there exists an $\ell \in \cF$ such that every entry of $\tilde{B}$ is attained at $\ell$. Let $i \geq \ell$ and consider the set $\tilde{\X} \setminus \ex = \{u_1, \ldots, u_m\}$ of frozen variables  of $\tilde{\X}$. 
For $u = u_1$ consider the set $N_{\tilde{B}}(u) \cap \ex$ of exchangeable $\tilde{B}$-neighbours of $u$. This set is finite, since it only contains entries from $\tilde{\X} \cap \ex$, say $N_{\tilde{B}}(u) \cap \ex = \{x(1), \ldots, x(n)\}$. For every $1 \leq p \leq n$ there exists a unique $x_i(p) \in \ex_i$ such that $f_i(x_i(p)) = x(p)$. Moreover, for every $1 \leq p \leq n$ the entry $b_{x(p)u}$ is attained at $\ell$, so there exists a unique $u_i(p)$ with $f_i(u_i(p)) = u$ and 
	\[
		|b_{x(p)u}| = |b_{x_i(p)u_i(p) }|.
	\]
Note that a priori we cannot guarantee that $u_i(p) = u_i(q)$ for $p \neq q$. However, since $f_i(u_i(p)) = f_i(u_i(q))$ for all $1 \leq p,q \leq n$ there exists a $\ell_1 \geq \ell$ such that for all $j \geq \ell_1$ and all $1 \leq p,q \leq n$ we have $f_{ij}(u_i(p)) = f_{ij}(u_i(q)) = u_j$. Setting $x_j(p) = f_{ij}(x_i(p))$ and we have, for all $1 \leq p \leq n$,
\[
	|b^j_{x_j(p)u_j }| = \sum_{u'_i \colon f_{ij}(u'_i) = u_j} |b^i_{x_i(p)u'_k }| = |b^i_{x_i(p)u_i(p) }| = |b_{x(p)u}|.
\]
Hence $u=u_1$ is $\tilde{B}$-attained at $\ell_1$. Repeating the procedure one by one for $u_2, u_3, \ldots, u_m$ allows us to find an $\ell \in \cF$ as desired.
\end{proof}

We now  can show that the ind-seed satisfies the properties of a seed (Definition \ref{D:seed}), and therefore gives rise to a cluster algebra.

\begin{lemma}\label{L:ex-locally finite}
	The ind-seed associated to the directed system $F \colon \cF \to \mCl$ is a seed called the {\em seed of $F$}.
\end{lemma}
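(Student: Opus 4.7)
The lemma requires us to verify the three properties of Definition \ref{D:seed} for $\Sigma(F) = (\X, \ex, B)$: that $\ex \subseteq \X$, that $B$ is $\ex$-locally finite, and that $B$ is skew-symmetrisable. The inclusion $\ex \subseteq \X$ is immediate from the construction, since $\bE$ is a subfunctor of $\bX$ and the set-theoretic colimit preserves this inclusion after removing $\bZ$.

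For $\ex$-local finiteness, my plan is to fix $v \in \ex$ and, invoking Lemma \ref{L:well-defined-seed}(\ref{well-defined-seed1}), pick $\ell \in \cF$ and $v_\ell \in \ex_\ell$ with $f_\ell(v_\ell) = v$. Let $N = N_{B^\ell}(v_\ell)$, which is finite by $\ex_\ell$-local finiteness of $B^\ell$. I would show $\{w \in \X : b_{vw} \neq 0\} \subseteq f_\ell(N)$. Given such $w$, the nonvanishing $\tilde b_{vw} \neq 0$ together with Lemma \ref{L:well-defined-seed}(\ref{well-defined-seed2}) yields some $k \geq \ell$ and a unique $w_k \in \X_k$ with $f_k(w_k) = w$ and $b^k_{v_k w_k} \neq 0$, where $v_k = f_{\ell k}(v_\ell)$. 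Applying Lemma \ref{L:ws}(\ref{ws2}) to $f_{\ell k}$ gives
\[
b^k_{v_k w_k} = \sigma \sum_{w'_\ell :\, f_{\ell k}(w'_\ell) = w_k} b^\ell_{v_\ell w'_\ell},
\]
with summands of a common sign; hence at least one $w'_\ell \in N$ satisfies $f_\ell(w'_\ell) = f_k(w_k) = w$, as required.

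For skew-symmetrisability, I would work one exchangeably connected component $\ex[\alpha]$ at a time, since cross-component entries of $B$ vanish by construction of $\tilde B$. The central compatibility statement is: for $v, w \in \ex_\ell$ lying in a common component of $\ex_\ell$, with images $v_m, w_m \in \ex_m$ under $f_{\ell m}$, Lemma \ref{L:ws}(\ref{ws1}) (injectivity on exchangeables) collapses the sum in Lemma \ref{L:ws}(\ref{ws2}) to a single term, giving $b^m_{v_m w_m} = \sigma b^\ell_{v_\ell w_\ell}$ for a sign $\sigma$ shared across the ambient component of $\ex_m$. Thus for positive integer skew-symmetrisers $d^\ell$ of each $B^\ell$, the ratios $d^\ell_{v_\ell}/d^\ell_{w_\ell}$ are preserved under $f_{\ell m}$. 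Fixing a base point $v_0 \in \ex[\alpha]$, this determines a well-defined positive rational $\rho(v) := d^\ell_{v_\ell}/d^\ell_{v_{0,\ell}}$, independent of $\ell$ large enough. Scaling by a positive integer $D_\alpha$ per component then gives $d_v := D_\alpha \rho(v) \in \bZ_{>0}$ satisfying the skew-symmetriser equations via their counterparts at level $\ell$.

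The main technical subtlety is producing the integer $D_\alpha$ in the final step when $\ex[\alpha]$ is infinite, as a priori the denominators of $\rho(v)$ need not be bounded across $\ex[\alpha]$. I expect to handle this either by an inductive compatible construction of the $d^\ell$ across $\ell$ (so that $d_v = d^\ell_{v_\ell}$ is automatically an integer for large $\ell$), or by observing that in the skew-symmetric setting underlying the main application to Grassmannian coordinate rings one has $d_v \equiv 1$, trivialising the scaling issue.
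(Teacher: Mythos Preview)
Your argument for $\ex$-local finiteness is correct and is essentially the paper's: both pick a lift $v_\ell \in \ex_\ell$, use finiteness of $N_{B^\ell}(v_\ell)$, and then show via Lemma~\ref{L:ws}(\ref{ws2}) that any $B$-neighbour of $v$ lies in the image of $N_{B^\ell}(v_\ell)$.

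For skew-symmetrisability the paper takes a shorter route than you do. Rather than building global symmetrisers component by component, it invokes Lemma~\ref{L:attain finite submatrix}: for any finite $\tilde\ex\subseteq\ex$, the principal submatrix $B\mid_{\tilde\ex}$ is uniformly attained at some $\ell$, so up to a consistent sign per component it coincides with a principal submatrix of $B^\ell$ and is therefore skew-symmetrisable. The paper then passes from ``every finite principal submatrix is skew-symmetrisable'' to ``$B$ is skew-symmetrisable'' in one line. This buys brevity, and avoids your bookkeeping with ratios $\rho(v)$ and base points.

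Your acknowledged gap, however, is real and is precisely the content of that last implicit step in the paper. The implication ``every finite principal submatrix is skew-symmetrisable over $\bZ_{>0}$ $\Rightarrow$ the whole matrix is'' is \emph{not} automatic: for instance an $A_\infty$-type chain with $b_{i,i+1}=1$, $b_{i+1,i}=-2$ forces $d_i=2d_{i+1}$, which admits no global positive-integer solution while every finite segment does. Your option of an inductive compatible choice of $d^\ell$ does not escape this, since enlarging the seed may force a global rescaling of all previously chosen symmetrisers. So neither your argument nor the paper's, as written, closes this point in full generality; both are complete in the skew-symmetric case (where $d_v\equiv 1$), which covers the Grassmannian application.
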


\begin{proof}
	We  need to show that $B$ is $\ex$-locally finite and skew-symmetriseable. To see that it is $\ex$-locally finite, let $v \in \ex$ and take $k \in \cF$ such that $v = f_k(v_k)$ for a unique $v_k \in \X_k$. Since $B^k$ is $\ex_k$-locally finite, the set of $B^k$-neighbours
\[
	N_k(v_k) = \{w_k \in \X_k \mid b^k_{v_kw_k} \neq 0\}
\]
of $v_k$ is finite.
Let now $w \in \X$. If $b_{vw} \neq 0$ then by (\ref{well-defined-seed2}) of Lemma \ref{L:well-defined-seed}, the entry $\tilde{b}_{vw} > 0$ is attained at some $\ell \geq k$ by $(v_\ell,w_\ell)$. By (\ref{ws2}) of Lemma \ref{L:ws} we have
\[
	0 < |b_{vw}| = \tilde{b}_{vw} = |b^\ell_{v_\ell w_\ell}| = \sum_{f_{k \ell}(w_k) = w_\ell} |b^k_{v_k w_k}|.
\]
Therefore, there exists a $w_k \in N_k(v_k)$ such that $f_{k \ell}(w_k) = w_\ell$, i.e.\ $w = f_k(w_k)$ for some $w_k \in N_k(v_k)$. Since $N_k(v_k)$ is finite, so is the set of $B$-neighbours $N(v) = \{w \in \X \mid b_{vw} \neq 0\}$ of $v$.

To see that $B$ is skew-symmetriseable, we observe that for every finite subset $\tilde{\ex} \subseteq \ex$ the submatrix $\tilde{B} = B \mid_{\tilde{\ex}}$ of $B$ with rows and columns labelled by $\tilde{\ex}$ is uniformly attained at some $\ell$. 
Since $B^\ell$ is skew-symmetriseable, so is $\tilde{B}$. Hence, so is $B$.
\end{proof}

\begin{example}\rm 
For each $n \geq 1$ consider the skew-symmetric seed $\Sigma_n = (\X_n,\ex_n,B_n)$ given by the ice quiver
	\begin{center}
		\begin{tikzpicture}[scale=2]
				\node (1) at (0,0) {$x_1$};
				\node (2) at (1,0) {$x_2$};
				\node (3) at (2,0) {$x_3$};
				\node (4) at (3,0) {$\cdots$};
				\node (n-2) at (4,0) {$x_{n-2}$};
				\node (n-1) at (5,0.5) {$x_{n-1}$};
				
				\node[draw] (z1) at (0,-1) {$z_1$};
				\node[draw] (z2) at (1,-1) {$z_2$};
				\node[draw] (z3) at (2,-1) {$z_3$};
				\node (z4) at (3,-1) {$\cdots$};
				\node[draw] (zn-2) at (4,-1) {$z_{n-2}$};	
				\node[draw] (zn-1) at (5,-0.5) {$z_{n-1}$};
				\node[draw] (z'n-1) at (5,-1.5) {$z'_{n-1}$};
				
				\node[draw] (s) at (5,1) {$s$};

				\node[draw] (n1) at (6,1) {$v_1$};
				\node[draw] (n2) at (6,0.5) {$v_2$};
				\node (n3) at (6,0) {$\vdots$};
				\node[draw] (n4) at (6,-0.5) {$v_{n-1}$};
				
				\node (y1) at (0,-2) {$y_1$};
				\node (y2) at (1,-2) {$y_2$};
				\node (y3) at (2,-2) {$y_3$};
				\node (y4) at (3,-2) {$\cdots$};
				\node (yn-2) at (4,-2) {$y_{n-2}$};
				\node (yn-1) at (5,-2.5) {$y_{n-1}$};
				\node[draw] (yn) at (6,-2.5) {$y_{n}$};
				
				\draw[->] (1) to (2);
				\draw[->] (2) to node[midway,above] {$2$} (3);
				\draw[->] (3) to node[midway,above] {$3$} (4);
				\draw[->] (4) to node[midway,above] {$n-3$} (n-2);
				\draw[->] (n-2) to node[midway,above left] {$n-2$} (n-1);
				\draw[->] (n-1) to (n1);
				\draw[->] (n-1) to (n2);
				\draw[->] (n-1) to (n3);
				\draw[->] (n-1) to (n4);

				\draw[->] (1) to (z1);
				\draw[->] (2) to (z2);
				\draw[->] (3) to (z3);
				\draw[->] (n-2) to (zn-2);
				\draw[->] (n-1) to (zn-1);
				\draw[->] (n-1) to (s);
				
				\draw[->] (y1) to (z1);
				\draw[->] (y2) to (z2);
				\draw[->] (y3) to (z3);
				\draw[->] (yn-2) to (zn-2);
				\draw[->] (yn-1) to (z'n-1);
				\draw[->] (yn-1) to (yn);
				
				\draw[->] (y1) to (y2);
				\draw[->] (y2) to  (y3);
				\draw[->] (y3) to  (y4);
				\draw[->] (y4) to (yn-2);
				\draw[->] (yn-2) to (yn-1);
				
		\end{tikzpicture}
	\end{center}	
	Instead of drawing multiple arrows between two nodes, we indicate the number of arrows by a label, where no label indicates that there is just one arrow. 
	By \cite[Theorem~3.30]{Gcolimits} we obtain an inducible melting cluster morphism
	$f_{n} \colon \cA(\Sigma_n) \to \cA(\Sigma_{n+1})$ defined on the initial cluster as follows: For all $1 \leq i \leq n-1$ and $1 \leq j \leq n$ we map
	\[
		x_i \mapsto x_i, \; \;  v_i \mapsto x_n , \; \; y_j \mapsto y_j, \; \; z_i \mapsto z_i,\; \; z'_{n-1} \mapsto z_{n-1}, \; \; \text{ and } \; \; s \mapsto 1.
	\]
	This defines a directed system $F \colon \mathbb{N} \to \mCl$. The ind-seed $\Sigma(F) = (\X,\ex,B)$ is described by the ice quiver
	\begin{center}
		\begin{tikzpicture}[scale=2]
				\node (1) at (1,0) {$x_1$};
				\node (2) at (2,0) {$x_2$};
				\node (3) at (3,0) {$x_3$};
				\node (4) at (4,0) {$\cdots$};
				\node (n-1) at (5,0) {$x_{n-1}$};
				\node (n) at (6,0) {$x_n$};
				\node (n+1) at (7,0) {$x_{n+1}$};
				\node (n+2) at (8,0) {$\cdots$};
				
				\node[draw] (z1) at (1,-1) {$z_1$};
				\node[draw] (z2) at (2,-1) {$z_2$};
				\node[draw] (z3) at (3,-1) {$z_3$};
				\node (z4) at (4,-1) {$\cdots$};
				\node[draw] (zn-1) at (5,-1) {$z_{n-1}$};				
				\node[draw] (zn) at (6,-1) {$z_n$};
				\node[draw] (zn+1) at (7,-1) {$z_{n+1}$};
				\node (zn+2) at (8,-1) {$\cdots$};
				
				\draw[->] (1) to (2);
				\draw[->] (2) to node[midway,above] {$2$} (3);
				\draw[->] (3) to node[midway,above] {$3$} (4);
				\draw[->] (4) to node[midway,above] {$n-2$} (n-1);
				\draw[->] (n-1) to node[midway,above] {$n-1$}  (n);
				\draw[->] (n) to node[midway,above] {$n$}  (n+1);
				\draw[->] (n+1) to node[midway,above] {$n+1$}  (n+2);

				\draw[->] (1) to (z1);
				\draw[->] (2) to (z2);
				\draw[->] (3) to (z3);
				\draw[->] (n-1) to (zn-1);
				\draw[->] (n) to (zn);
				\draw[->] (n+1) to (zn+1);
				
				\node (y1) at (1,-2) {$y_1$};
				\node (y2) at (2,-2) {$y_2$};
				\node (y3) at (3,-2) {$y_3$};
				\node (y4) at (4,-2) {$\cdots$};
				\node (yn-1) at (5,-2) {$y_{n-1}$};				
				\node (yn) at (6,-2) {$y_n$};
				\node (yn+1) at (7,-2) {$y_{n+1}$};
				\node (yn+2) at (8,-2) {$\cdots$};
				
				\draw[->] (y1) to (y2);
				\draw[->] (y2) to  (y3);
				\draw[->] (y3) to  (y4);
				\draw[->] (y4) to  (yn-1);
				\draw[->] (yn-1) to  (yn);
				\draw[->] (yn) to  (yn+1);
				\draw[->] (yn+1) to (yn+2);

				\draw[->] (y1) to (z1);
				\draw[->] (y2) to (z2);
				\draw[->] (y3) to (z3);
				\draw[->] (yn-1) to (zn-1);
				\draw[->] (yn) to (zn);
				\draw[->] (yn+1) to (zn+1);
		\end{tikzpicture}
	\end{center}	
with countable cluster $\X = \{x_i, y_i, z_i \mid i \in \bN\}$, countable set of exchangeable variables $\ex = \{x_i, y_i \mid i \in \bN\}$ and for each $i \in \bN$ we have $i$ arrows from $x_i$ to $x_{i+1}$, one arrow from $y_i$ to $y_{i+1}$ as well as one arrow from $x_i$ to $z_i$ and from $y_i$ to $z_i$. Consider the finite submatrix $B_{n-1}$ of $B$ with rows labelled by $\{x_i, y_i \mid 1 \leq i \leq n-1\}$ and columns labelled by $\{x_i, z_i, y_i \mid 1 \leq i \leq n-1\}$. While every entry of the matrix $B_{n-1}$ is attained at $n \in \bN$, the entry $b_{x_{n-1}z_{n-1}}$ is attained by $(x_{n-1}, z_{n-1})$ in $\X_n$, while the entry $b_{y_{n-1}z_{n-1}}$ is attained by $(y_{n-1}, z'_{n-1})$. The matrix $B_{n-1}$ is thus not uniformly attained at $n$. It is only uniformly attained at $n+1$. 
\end{example}	

\subsection{Directed colimits}\label{S:construction} Throughout this section, we keep working with a fixed directed system $F \colon \cF \to \mCl$. For all $i \in \cF$ set $F(i) = \cA(\Sigma_i)$ with seed $\Sigma_i = (\X_i,\ex_i,B^i = (b_{vw}^i)_{v,w \in \X_i})$, and for a morphism $i \leq j$ in $\cF$ set $F(i \leq j) = f_{ij}$. The ind-seed $\Sigma(F)$ of $F$ gives rise to a co-cone $\cA(\Sigma(F))$ of the diagram $F$.

\begin{proposition}\label{P:cone}
	The colimit maps $f_i \colon \X_i \to \X \cup \bZ$ uniquely extend to ring homomorphisms $f_i \colon \cA(\Sigma_i) \to \cA(\Sigma(F))$. Moreover, the following holds:
\begin{enumerate}	
	\item Every $f_i$-biadmissible sequence $\underline{x}_i$ is $f_{ij}$-biadmissible for all $j \geq i$. We set $f_i(\underline{x}_i) = \underline{x}$ and $f_{ij}(\underline{x}_i) = \underline{x}_j$.
	\item Every such sequence gives rise to a directed system $\mu_{\underline{x}}(F) \colon \cF_{\geq i} \to \mCl$ given by $\mu_{\underline{x}}(F)(j) = \cA(\mu_{\underline{x}_j}\Sigma_j)$ with maps $\mu_{\underline{x}}(F)(i \leq j) = F(i \leq j) = f_{ij}$. Here, $\cF_{\geq i}$ is the full subcategory of objects in $\cF$ that are at least $i$.
	\item We have a strong similarity of seeds $\mu_{\underline{x}}(\Sigma(F)) \cong \Sigma(\mu_{\underline{x}}(F))$. 
    \item For all $y_i \in \X_i$ with $f_i(y_i) = y$ we have $f_i(\mu_{\underline{x}_i}(y_i)) = \mu_{\underline{x}}(y)$.
\end{enumerate}
In particular, the rooted cluster algebra $\cA(\Sigma(F))$ is a co-cone of $F$ in $\mCl$.
\end{proposition}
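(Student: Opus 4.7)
The extension of each colimit map $f_i$ to a ring homomorphism $\cA(\Sigma_i) \to \bQ(\X)$ uses only the iMCM axiom for $f_{ij}$: since $f_i = f_j \circ f_{ij}$, the image $f_{ij}(x) \in \ex_j \cup (\bZ \setminus \{0\})$ of any $x \in \ex_i$ is pushed by $f_j$ to an element of $\ex \cup (\bZ \setminus \{0\})$, so the denominators appearing in the Laurent expansions of cluster variables of $\cA(\Sigma_i)$ are sent to nonzero elements, and $f_i$ is well-defined on $\cA(\Sigma_i)$. That the image in fact lies in $\cA(\Sigma(F))$ will follow once (4) is established, by induction on admissible sequences generating cluster variables.

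Claim (1) follows by iteration: if $f_i(x_k) \in \ex$, then $f_{ij}(x_k) \in \ex_j$, because an integer cannot lie in the $f_j$-preimage of a non-integer element of $\ex$. The admissibility of the image sequence in $\Sigma_j$ is then automatic from CM2 for the melting cluster morphism $f_{ij}$ applied along the prefix $(x_1, \ldots, x_{k-1})$. Claim (2) is then formal: the rings $\cA(\mu_{\underline{x}_i}\Sigma_i) = \cA(\Sigma_i)$ and $\cA(\mu_{\underline{x}_j}\Sigma_j) = \cA(\Sigma_j)$ do not change when we mutate initial seeds, and the axioms of an inducible melting cluster morphism for $f_{ij}$ transfer from $\Sigma_i, \Sigma_j$ to $\mu_{\underline{x}_i}\Sigma_i, \mu_{\underline{x}_j}\Sigma_j$ by involutivity of mutation. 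For (3), since $\underline{x}$ is finite, mutation along $\underline{x}$ interacts only with finitely many elements of $\X$ and commutes with the colimit construction, so $\mu_{\underline{x}}(\Sigma(F))$ and $\Sigma(\mu_{\underline{x}}(F))$ share the same cluster and exchangeable set; their exchange matrices agree up to a global sign per exchangeably connected component, giving strong similarity.

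The substance of the proposition lies in (4), which I prove by induction on the length $\ell$ of $\underline{x}_i$. For $\ell = 1$, with $x_i \in \ex_i$ and $f_i(x_i) = x \in \ex$, applying $f_i$ to the exchange relation at $x_i$ yields
\[
    f_i(\mu_{x_i}(x_i)) = \frac{\prod_{b^i_{x_i u} > 0} f_i(u)^{b^i_{x_i u}} + \prod_{b^i_{x_i v} < 0} f_i(v)^{-b^i_{x_i v}}}{x}.
\]
I choose $j \geq i$ so that the finite submatrix of $B^i$ determined by $x_i$ and its $B^i$-neighbours is uniformly attained at $j$ (Lemma \ref{L:attain finite submatrix}). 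Lemma \ref{L:specialisation of neighbours}, applied to $f_{ij}$ and then composed with the ring homomorphism $f_j$, forces the products over neighbours mapped into $\bZ$ to collapse to $1$, while Lemma \ref{L:ws}(2) combined with the construction of $B$ in Definition \ref{D:ind-seed} identifies the remaining contributions with the two monomials of $\mu_x(x)$, possibly with the two terms in the numerator swapped. For the inductive step, (2) and (3) let us pass to the directed system $\mu_{x_1}(F)$, whose ind-seed is strongly similar to $\mu_{x_1}(\Sigma(F))$, and apply the inductive hypothesis to $(x_2, \ldots, x_\ell)$.

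The main obstacle is the sign bookkeeping in the base case: the signs of $B$ are set by the sign-determining entries of Definition \ref{D:ind-seed}, which need not match the signs of $B^i$, and the $\pm$-indeterminacy from Lemma \ref{L:ws}(2) must be tracked carefully. A posteriori this ambiguity is harmless, since a global sign flip in an exchangeably connected component merely swaps the two monomials in the exchange relation. Once (4) is established, the co-cone property is immediate: $f_j \circ f_{ij} = f_i$ holds by construction of the colimit maps, and $f_i$ satisfies CM1 and MCM/iMCM by the first paragraph, while CM2 is exactly (4).
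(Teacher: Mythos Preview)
Your proposal is essentially correct and follows the same inductive strategy as the paper: reduce to length-one sequences via (2) and (3), then in the base case collapse the integer-specialised neighbours to $1$ using Lemma~\ref{L:specialisation of neighbours} and match the surviving monomials to those of $\mu_x(x)$ via the construction of $B$. One point of imprecision: you speak of ``the finite submatrix of $B^i$ determined by $x_i$ and its $B^i$-neighbours'' being uniformly attained, but Lemma~\ref{L:attain finite submatrix} is stated for finite submatrices of the ind-seed matrix $B$, not of $B^i$; the paper accordingly works with $N_B(x)\cup\{x\}\subseteq\X$ and, having chosen $j$ where this is uniformly attained, computes $f_i(\mu_{x_i}(x_i))=f_j(\mu_{x_j}(x_j))$ directly at level $j$ (then passes to a further $k\geq j$ where the integer specialisations have already occurred, so that Lemma~\ref{L:specialisation of neighbours} applies to the known melting morphism $f_{jk}$ rather than to the not-yet-verified $f_j$). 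Your route through $f_{ij}$ is equivalent once $j$ is chosen large enough for both purposes, but the phrasing should be adjusted to invoke the lemma as stated.
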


\begin{proof}
	By the Laurent phenomenon, every cluster variable in $\cA(\Sigma_i)$ can be written as a Laurent polynomial $\frac{P(\X_i)}{M(\ex_i)}$, where $P$ is a polynomial expression in $\X_i$, and $M$ is a monic monomial expression in $\ex_i$. Since the maps $f_{ij}$ are inducible, we have that $f_i(\ex_i) \subseteq \ex \cup \bZ \setminus \{0\}$, and we can algebraically extend the map $f_i$ to a unique ring homomorphism
\[
	f_i \colon \cA(\Sigma_i) \to \cA(\Sigma).
\]
By construction, these ring homomorphisms satisfy CM1 and iMCM, and for all $i \leq j$ we have $f_i = f_j \circ f_{ij}$. In particular, if we can show Claim (4) then we know that $f_i$ also satisfies axiom CM2, and hence is an induced melting cluster morphism. Thus we will have shown that $\cA(\Sigma(F))$ is a co-cone of $F$ in $\mCl$.

The Claims (1)-(4) are trivially true for sequences of length $0$. Now assume they hold for sequences of length $\ell-1$. Let $\underline{x}_i = (x_1(i), \ldots, x_\ell(i))$ and consider its truncation $\underline{x}'_i = (x_1(i), \ldots, x_{\ell-1}(i))$ of length $\ell-1$. For $j \geq i$ we set $\underline{x}'_j = f_{ij}(\underline{x}'_i)$ and $\underline{x}' = f_{i}(\underline{x}'_i)$. By induction assumption for Claim (2), by mutating $\Sigma_j$ along $\underline{x}'_j$ for all $j \geq i$, we get a directed system $\mu_{\underline{x}'}(F)$ with colimit $\cA(\Sigma(\mu_{\underline{x}'}(F)))$, which, by induction assumption for Claim (3) and Proposition \ref{P:similar_iso}, is canonically isomorphic to $\cA(\mu_{\underline{x}'}(\Sigma(F)))$. To show that Claims (1)-(4) hold for the sequence $\underline{x}_i$ it is thus enough to show that they hold for mutation sequences of length $1$, by passing from the directed system $F$ to the directed system $\mu_{\underline{x}'}(F)$.


 Consider thus a length one $\Sigma_i$-admissible sequence $\underline{x}_i = (x_i)$ for some $x_i \in \ex_i$. For all $j \geq i$, set $f_{ij}(x_i) = x_j$ and $f_i(x_i) = x$. We have 
\[
	x_j = f_{ij}(x_i) \in f_{ij}(\ex_i) \subseteq \ex_j \cup \bZ.
\]
Since $f_j(x_j) =  f_i(x_i) \notin \bZ$, we must have $x_j \in \ex_j$ as desired. This proves Claim (1). Claim (2) is a direct consequence of Claim (1) combined with \cite[Proposition~3.5]{Gcolimits}. 

It remains to prove Claims (3) and (4). Consider the set of $B$-neighbours $N_B(x)$ of $x$. This set is finite by Lemma \ref{L:ex-locally finite}, and thus gives rise to a finite submatrix $A$ of $B$. By Lemma \ref{L:attain finite submatrix}, the submatrix $A$ is uniformly attained at some $j \geq i$. Hence there exists a $\sigma \in \{1, -1\}$ such that for all $v \in N(x)$ with $b_{uv} \neq 0$ we have
\[
	b_{uv} = \sigma b^j_{u_j v_j}
\]
for a unique $v_j \in \X_j$. Without loss of generality we will assume that $\sigma = 1$, the case $\sigma=-1$ follows symmetrically. For all $x_i \neq y_i \in \X_i$ with $f_{i}(y_i) = y$ we have $x \neq y$, since otherwise $f_{ij}(x_i) = f_{ij}(y_i)$ for some $j \geq i$ contradicting Lemma \ref{L:ws}. Therefore we have
\[
	f_i(\mu_{x_i}(y_i)) = f_i(y_i) = y = \mu_{x}(x).
\]
On the other hand, we have
\begin{eqnarray*}
	f_i(\mu_{x_i}(x_i)) &=& f_j(\mu_{x_j}(x_j)) \\
	& = &  \frac{\prod_{u_j \in \X_j \colon b_{x_ju_j} > 0} f_j(u_j)^{b_{x_ju_j}} + \prod_{v_j \in \X_j \colon b_{x_jv_j} < 0} f_j(v_j)^{-b_{x_jv_j}}}{f_j(x_j)}.
\end{eqnarray*}
Consider the sets $(\X_j)_{\bZ}$ and $(\X_j)_{cl}$, where 
\[
	(\X_j)_{\bZ}= \{v_j \in \X_j \mid f_j(v_j) \in \bZ\} \; \; \text{ and } (\X_j)_{cl}= \{v_j \in \X_j \mid f_j(v_j) \in \X\}.
\]
There exists a $k \geq j$ such that for all $v_j \in (\X_j)_{\bZ}$ we have $f_{jk}(v_j) = f_j(v_j) \in \bZ$. 
By Lemma \ref{L:specialisation of neighbours} we obtain
\begin{eqnarray*}
	f_i(\mu_{x_i}(x_i)) &=& f_j(\mu_{x_j}(x_j)) = f_k \circ f_{jk}(\mu_{x_j}(x_j))\\
	& = &  \frac{\prod_{u_j \in (\X_j)_{cl} \colon {b^j}_{x_ju_j}>0} f_j(u_j)^{b_{x_ju_j}} + \prod_{v_j \in (\X_j)_{cl} \colon b^j_{x_jv_j}<0} f_j(v_j)^{-b_{x_jv_j}}}{f_j(x_j)} \\
	& = &  \frac{\prod_{u \in \X \colon b_{xu}>0} u^{b_{xu}} + \prod_{v \in \X \colon b_{xv}<0} v^{-b_{xv}}}{x} \\
	&=& \mu_{x}(x).
\end{eqnarray*}
This proves Claim (4), and moreover, it proves part of Claim (3), namely that $\mu_x(\X) = \colim \mathbb{X} \circ \mu_{x}(F) \setminus \bZ$, and $\mu_x(\ex) = \colim \mathbb{E} \circ \mu_{x}(F) \setminus \bZ$. Hence, to finish our proof it remains to show that the entries of $\mu_{x}(B) = B' = (b'_{uv})_{u \in \mu_x(\ex),v \in \mu_{x}(\X)}$ and of the exchange matrix $\tilde{B} = (\tilde{b}_{uv})_{u \in \mu_x(\ex),v \in \mu_x(\X)}$ of $\cA(\mu_x(F))$ agree 
up to a consistent sign on exchangeably connected components. By construction, this is the case on all exchangeably connected components {\em not} containing $x \in \ex$, so it remains to check it on the exchangeably connected component containing $x$. Let $u \in \mu_x(\ex)$ and $v \in \mu_x(\X)$. For all $\ell \geq j$ set $\mu_{x_\ell}(B_\ell) = ({b'}^\ell_{uv})$.  By definition of matrix mutation, we have
\[
	b'_{uv} = 	\begin{cases}
				-b_{xv} & \text{if $u =\mu_{x}(x)$}\\
				-b_{ux} & \text{if $v =\mu_{x}(x)$}\\
				b_{uv} + \frac{1}{2}(|b_{ux}|b_{xv} + b_{ux}|b_{xv}|) & \text{otherwise.}
			\end{cases}
\]
The finite submatrix $A$ of $B$ with rows and columns labelled by $N_B(x) \cup \{x\}$ is uniformly attained at $j > 0$. Therefore, whenever the relevant entries are defined in $B$, for every $\ell \geq j$ there exists a $\sigma_\ell \in \{+1, -1\}$ and a unique $x_\ell$, $u_\ell$ and $v_\ell$ with $b_{uv} = \sigma_\ell b^\ell_{u_\ell v_\ell}$, $b_{ux} = \sigma_\ell b^\ell_{u_\ell x_\ell}$, and $b_{xv} = \sigma_\ell b^\ell_{x_\ell v_\ell}$. 
We obtain
\begin{eqnarray*}
	\sigma_\ell b'_{uv} &=& 	\begin{cases}
				-b^\ell_{x_\ell v_\ell} & \text{if $u =\mu_{x}(x)$}\\
				-b^\ell_{u_\ell x_\ell} & \text{if $v =\mu_{x}(x)$}\\
				b^\ell_{u_\ell v_\ell} + \frac{1}{2}(|b^\ell_{u_\ell x_\ell}|b^\ell_{x_\ell v_\ell} + b^\ell_{u_\ell x_\ell}|b_{x_\ell v_\ell}|) & \text{otherwise.}
			\end{cases}	\\
			& = & {b'}^\ell_{u_\ell v_\ell}.
\end{eqnarray*}
This is, up to a sign depending on the choice of sign-determining entry for the $\tilde{B}$-exchangeably connected component containing $x$, equal to the entry $\tilde{b}_{uv}$ of the exchange matrix $\tilde{B}$ of $\Sigma(\mu_x(F))$, as desired.
\end{proof}

An immediate consequence is that every cluster variable in $\cA(\Sigma(F))$ comes from a cluster variable in the filtered system.

\begin{corollary}\label{C:big-admissible}
	For every $\Sigma(F)$-admissible sequence $\underline{x} = (x_1, \ldots, x_\ell)$ there exists an $i \in \cF$ such that for all $j \geq i$ there exists a unique $\Sigma_j$-admissible sequence $\underline{x}_j = (x_1^j, \ldots, x_\ell^j)$ with $f_j(\underline{x}_j) = \underline{x}$. For $k \geq j \geq i$ we have $f_{jk}(\underline{x}_j) = \underline{x}_k$. In particular, for every cluster variable $y \in \cA(\Sigma(F))$ there exists an $i \in \cF$ such that for all $k \geq j \geq i$ we have $y = f_j(y_j)$ for some cluster variable $y_j \in \cA(\Sigma_j)$ and $f_{jk}(y_j) = y_k$.
\end{corollary}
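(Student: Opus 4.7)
\medskip

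\noindent\textbf{Proof plan.} My plan is to prove the first claim by induction on the length $\ell$ of the $\Sigma(F)$-admissible sequence $\underline{x}$, using Lemma \ref{L:well-defined-seed}(1) at each step to lift the next exchangeable variable and Proposition \ref{P:cone}(2)--(3) to replace the diagram by its mutation so that the induction hypothesis can be reapplied. The cluster variable statement will then be a direct consequence via Proposition \ref{P:cone}(4).

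For the base case $\ell = 0$ there is nothing to prove. For the inductive step, suppose the first claim holds for all admissible sequences of length at most $\ell - 1$ over any directed system into $\mCl$, and let $\underline{x} = (x_1, \ldots, x_\ell)$ be $\Sigma(F)$-admissible. Since $x_1 \in \ex$, Lemma \ref{L:well-defined-seed}(1) yields $i_1 \in \cF$ such that for every $j \geq i_1$ there is a unique $x_1^j \in \ex_j$ with $f_j(x_1^j) = x_1$. For $k \geq j \geq i_1$, the equality $f_k(f_{jk}(x_1^j)) = f_j(x_1^j) = x_1$ combined with uniqueness forces $f_{jk}(x_1^j) = x_1^k$. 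Consider the directed sub-system $\mu_{x_1}(F) \colon \cF_{\geq i_1} \to \mCl$ from Proposition \ref{P:cone}(2). By Proposition \ref{P:cone}(3), its ind-seed is strongly similar to $\mu_{x_1}(\Sigma(F))$, so the tail $(x_2, \ldots, x_\ell)$ is admissible for $\Sigma(\mu_{x_1}(F))$. Applying the induction hypothesis to this shorter sequence produces some $i \geq i_1$ and, for each $j \geq i$, a unique $\mu_{x_1^j}(\Sigma_j)$-admissible lift $(x_2^j, \ldots, x_\ell^j)$ with the required compatibility. Prepending $x_1^j$ gives the desired $\Sigma_j$-admissible sequence $\underline{x}_j$. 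Uniqueness of $\underline{x}_j$ is inherited from uniqueness of $x_1^j$ in Lemma \ref{L:well-defined-seed}(1) together with the induction hypothesis. For $k \geq j \geq i$, the image $f_{jk}(\underline{x}_j)$ is $\Sigma_k$-admissible by Proposition \ref{P:cone}(1) applied to $f_{jk}$ (equivalently: $\underline{x}_j$ is $f_{jk}$-biadmissible, with image an admissible lift of $\underline{x}$ at level $k$), and uniqueness forces $f_{jk}(\underline{x}_j) = \underline{x}_k$.

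For the cluster variable statement, any $y \in \cA(\Sigma(F))$ lies in some cluster of $\cA(\Sigma(F))$, so there exists a $\Sigma(F)$-admissible sequence $\underline{x}$ and $z \in \X$ with $y = \mu_{\underline{x}}(z)$. By the definition of $\X$ as a colimit, choose $i_0 \in \cF$ and $z_{i_0} \in \X_{i_0}$ with $f_{i_0}(z_{i_0}) = z$; applying the first part of the corollary to $\underline{x}$ and enlarging if necessary, we obtain a single $i \in \cF$ such that $i \geq i_0$ and the lifts $\underline{x}_j$ exist for all $j \geq i$. Setting $z_j = f_{i_0 j}(z_{i_0})$ and $y_j = \mu_{\underline{x}_j}(z_j)$, Proposition \ref{P:cone}(4) gives $f_j(y_j) = \mu_{\underline{x}}(z) = y$, and axiom CM2 for $f_{jk}$ applied to the $f_{jk}$-biadmissible sequence $\underline{x}_j$ yields $f_{jk}(y_j) = \mu_{f_{jk}(\underline{x}_j)}(f_{jk}(z_j)) = \mu_{\underline{x}_k}(z_k) = y_k$.

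The main technical point is to set up the induction so that after lifting $x_1$ one is again looking at precisely the same situation over a smaller piece of the directed system; this is exactly what Proposition \ref{P:cone}(2)--(3) provides, and with that in hand the corollary reduces to careful bookkeeping of uniqueness via Lemma \ref{L:ws}(1) and Lemma \ref{L:well-defined-seed}(1). I do not anticipate any serious obstacle beyond matching the indices between the iterated directed sub-systems $\mu_{x_1}(F), \mu_{(x_1, x_2)}(F), \ldots$ produced by the induction.
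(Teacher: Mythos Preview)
Your proof is correct and follows essentially the same approach as the paper: induction on the length $\ell$, using Proposition~\ref{P:cone}(3) to identify the mutated ind-seed with the ind-seed of the mutated system, and Lemma~\ref{L:well-defined-seed}(1) to lift an exchangeable variable. The only cosmetic difference is that the paper peels off the \emph{last} entry $x_\ell$ (applying the induction hypothesis to the prefix $(x_1,\ldots,x_{\ell-1})$ first and then lifting $x_\ell$ in the mutated system), whereas you peel off the \emph{first} entry $x_1$ and apply the induction hypothesis to the tail; both organisations are equivalent, and your treatment of the compatibility $f_{jk}(\underline{x}_j)=\underline{x}_k$ and of the cluster-variable statement is in fact more explicit than the paper's.
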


\begin{proof}
	The first part of the claim is trivially true if $\ell = 0$. Assume thus it holds for sequences of lengths at most $\ell -1$. By Proposition \ref{P:cone} we have a strong similarity $\Sigma(\mu_{x_{\ell-1}} \circ \cdots \circ \mu_{x_1}(F)) \cong \mu_{x_{\ell-1}} \circ \cdots \circ \mu_{x_1}(\Sigma(F))$. The cluster variable $x_\ell$ thus is exchangeable in the seed $\Sigma(\mu_{x_{\ell-1}} \circ \cdots \circ \mu_{x_1}(F))$. By Lemma \ref{L:well-defined-seed} there exists an $i \in \cF$ such that for all $j \geq i$ there exists a unique $x_\ell^j \in \X_j$ with $f_j(x_\ell^j) = x_\ell$. This proves the first part of the claim. The second part follows immediately.
\end{proof}

\begin{remark}\label{R:uniform attainment preserved under mutation}\rm
	Let $B$ be the exchange matrix of $\Sigma(F)$, and let $x \in \ex$. Consider the finite submatrix $\tilde{B}$ of $B$ with rows and columns labelled by $N_B(x) \cup \{x\}$. If $\tilde{B}$ is attained at $\ell$, then so is the finite submatrix of $\mu_{\underline{x}}(B)$ with rows and columns labelled by $N_B(x) \cup \{\mu_{x}(x)\}$. This is an immediate consequence of the proof of Proposition \ref{P:cone}(3).
\end{remark}

We can now prove one of our main results.

\begin{theorem}\label{T:directed_closed}
 The category $\mCl$ of inducible melting cluster morphisms is closed under directed colimits. More precisely, given a directed system $F \colon \cF \to \mCl$, we have $\colim F \cong \cA(\Sigma(F))$.
\end{theorem}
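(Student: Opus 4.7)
The plan is to prove the universal property. By Proposition \ref{P:cone}, $\cA(\Sigma(F))$ together with the maps $f_i \colon \cA(\Sigma_i) \to \cA(\Sigma(F))$ is already a co-cone of $F$ in $\mCl$. What remains is to show this co-cone is universal: given any competing co-cone $(g_i \colon \cA(\Sigma_i) \to \cA(\cT))_{i \in \cF}$ in $\mCl$, I must construct a unique morphism $g \colon \cA(\Sigma(F)) \to \cA(\cT)$ in $\mCl$ satisfying $g \circ f_i = g_i$ for all $i \in \cF$.

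First I would define $g$ on the initial cluster $\X$ of $\Sigma(F)$. For each $x \in \X$, by Remark \ref{R:ind-cluster} there exist $i \in \cF$ and $x_i \in \X_i$ with $f_i(x_i) = x$; set $g(x) := g_i(x_i)$. Well-definedness follows from the co-cone compatibility $g_j \circ f_{ij} = g_i$ together with the equivalence relation defining $\X$ as a colimit in $\mathrm{Set}$. Axiom CM1 on the initial cluster is immediate. For axiom iMCM, let $x \in \ex$; by Lemma \ref{L:well-defined-seed}(\ref{well-defined-seed1}), any lift $x_i$ lies in $\ex_i$ for $i$ large enough, so $g(x) = g_i(x_i) \in \ex_{\cT} \cup \bZ\setminus\{0\}$ since $g_i$ is inducible. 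The Laurent phenomenon (combined with the fact that $g$ maps $\ex$ to $\ex_{\cT} \cup \bZ\setminus\{0\}$, so denominators remain invertible) then extends $g$ uniquely to a ring homomorphism $g \colon \cA(\Sigma(F)) \to \cA(\cT)$.

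The main obstacle is verifying axiom CM2 for $g$. Let $\underline{x} = (x_1,\ldots,x_\ell)$ be a $g$-biadmissible sequence and let $y$ be in the initial cluster of $\Sigma(F)$. By Corollary \ref{C:big-admissible}, there exists $i \in \cF$ such that for all $j \geq i$ the sequence $\underline{x}$ lifts uniquely to a $\Sigma_j$-admissible sequence $\underline{x}_j$ with $f_j(\underline{x}_j) = \underline{x}$, and $y = f_j(y_j)$ for a unique $y_j \in \X_j$. The sequence $\underline{x}_j$ is $g_j$-biadmissible since $g_j(\underline{x}_j) = g \circ f_j(\underline{x}_j) = g(\underline{x})$, which is $\cA(\cT)$-admissible by assumption. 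Applying Proposition \ref{P:cone}(4) to $f_j$ and then axiom CM2 to $g_j$, I obtain
\[
g(\mu_{\underline{x}}(y)) = g\bigl(f_j(\mu_{\underline{x}_j}(y_j))\bigr) = g_j(\mu_{\underline{x}_j}(y_j)) = \mu_{g_j(\underline{x}_j)}(g_j(y_j)) = \mu_{g(\underline{x})}(g(y)),
\]
as required.

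Finally, uniqueness follows because any morphism $g'$ in $\mCl$ satisfying $g' \circ f_i = g_i$ must agree with $g$ on $\X$ by construction, and the Laurent phenomenon forces the values of $g'$ on every cluster variable of $\cA(\Sigma(F))$ to be determined by its values on $\X$. Thus $\cA(\Sigma(F)) \cong \colim F$. I expect the subtlest step to be CM2, where one must reconcile the ind-construction of the exchange matrix (with its sign-determining choices) with the mutation axiom for each $g_j$; this is why the lifting results of Lemma \ref{L:well-defined-seed}, Proposition \ref{P:cone} and Corollary \ref{C:big-admissible} do all the real work.
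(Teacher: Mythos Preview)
Your proposal is correct and follows essentially the same approach as the paper's own proof: define the universal map on the initial cluster via the colimit in $\mathrm{Set}$, extend by the Laurent phenomenon, and verify CM2 by lifting biadmissible sequences via Corollary \ref{C:big-admissible} and using that both $f_j$ and $g_j$ commute with mutation. The only cosmetic difference is that the paper phrases the step $g(\mu_{\underline{x}}(y)) = g(f_j(\mu_{\underline{x}_j}(y_j)))$ as an application of CM2 for $f_j$ rather than invoking Proposition~\ref{P:cone}(4) explicitly, and it does not claim uniqueness of the lift $y_j$ (which indeed need not hold when $y$ is frozen, though this is harmless for the argument).
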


\begin{proof}

By Proposition \ref{P:cone} it is enough to show that $\cA(\Sigma(F))$ with the co-cone maps $f_i \colon \cA(\Sigma_i) \to \cA(\Sigma(F))$ is universal. Set $\Sigma(F) = (\X,\ex,B)$ and assume that $\cA(\Sigma')$ with $\Sigma' = (\X',\ex',B')$ is also a co-cone via inducible melting roooted cluster morphisms $g_k \colon \cA(\Sigma_k) \to \cA(\Sigma')$, such that $g_j \circ f_{ij} = g_i$ for all $i \leq j$. 

Consider the restriction of maps $g_i \colon \X_i \cup \bZ \to \X' \cup \bZ$ in the category of sets. Since $\X \cup \bZ$ is defined to be the colimit of $\bX \circ F$, it follows that there is a unique map $\varphi \colon \X \cup \bZ \to \X' \cup \bZ$ such that $\varphi \circ f_i = g_i$ for all $i \in \cF$, which must restrict to the universal map $\varphi \colon \ex \cup \bZ \to \ex' \cup \bZ$. Moreover, for all $x \in \ex$, there exists some $i \in \cF$ and an $x_i \in \ex_i$ with $f_i(x_i) = x$, so we must have $\varphi(x) = g_i(x_i) \in \ex' \cup \bZ \setminus \{0\}$. The map $\varphi$ extends to a unique ring homomorphism $\cA(\Sigma(F)) \to \cA(\Sigma')$ which satisfies CM1 and iMCM.

It remains to check that $\varphi$ satisfies CM2. Let thus $\underline{x}$ be a $\varphi$-biadmissible sequence, and let $y \in \X$. By Corollary \ref{C:big-admissible} there exists an $i \in \cF$ such that for all $j \geq i$ we have a unique $\Sigma_j$-admissible sequence $\underline{x}_j$ with $f_j(\underline{x}_j) = \underline{x}$. In particular, this sequence $\underline{x}_j$ is $g_i$-biadmissible. Moreover, we can pick $i$ big enough such that $y = f_i(y_i)$ for some $y_i \in \X_i$. Using that $f_i$ and $g_i$ satisfy CM2, we obtain
\begin{eqnarray*} 
	\varphi(\mu_{\underline{x}}(y)) &=& \varphi(\mu_{f_i(\underline{x}_i)}(f_i(y_i)))
	= \varphi \circ f_i (\mu_{\underline{x}_i}(y_i)) \\
	&=& g_i(\mu_{\underline{x}_i}(y_i))	
	= \mu_{g_i(\underline{x}_i)}(g_i(y_i))\\
	&=& \mu_{\varphi \circ f_i(\underline{x}_i)}(\varphi \circ f_i(y_i))
	= \mu_{\varphi(\underline{x})}(\varphi(y)).
\end{eqnarray*}
\end{proof}

Every cluster of $\cA(\Sigma(F))$ can be constructed via a colimit of clusters in the directed system.

\begin{remark}\label{R:cluster_colimits} \rm
    Assume $F \colon \cF \to \mCl$ is directed. By Corollary \ref{C:big-admissible}, every cluster of $\cA(\Sigma(F))$ can be viewed as a directed colimit of clusters in $\cA(\Sigma_i)$: If $\X$ is the initial cluster, and $\tilde{\X}$ is any cluster of $\cA(\Sigma(F))$, then there exists a $\Sigma(F)$-admissible sequence $\underline{x}$ such that $\mu_{\underline{x}}(\X) = \tilde{\X}$. By Corollary \ref{C:big-admissible} we have an $i \in\cF$ such that for $j \geq i$ there exists a unique $\Sigma_i$-admissible sequence $\underline{x}_i$ with $f_i(\underline{x}_i) = \underline{x}$. For $j \geq i$ set $\tilde{\X}_j = \mu_{\underline{x}_j}(\X_j)$. We obtain a directed system $\tilde{\bX} \colon \cF \to \mathsf{Set}$ with $\tilde{\bX}(i) = \tilde{\X}_i \cup \bZ$ and $\bX(j \geq i) = f_{ji} \mid_{\tilde{X}_i \cup \bZ}$, of which $\tilde{\X} \sqcup \bZ \subseteq \cA(\Sigma(F))$ is the colimit. That is, we have $\tilde{\X} = \bigcup_{j \geq i} \tilde{\X}_i / \sim$, where $x_j \sim x_k$ for $x_j \in \X_j$ and $x_k \in \X_k$ if there exists $\ell \geq j,k$ such that $f_{k\ell}(x_k) = f_{j\ell}(x_j)$.
\end{remark}

It is useful to observe that a filtered colimit in $\mCl$ is also a filtered colimit of rings. In particular, we will make use of this when discussing the coordinate ring of the Sato-Segal-Wilson Grassmannian as an example of an ind-cluster algebra.

\begin{proposition}\label{P:forgetful_commutes}
    The forgetful functor $G \colon \mCl \to \mathsf{Ring}$ commutes with filtered colimits.
\end{proposition}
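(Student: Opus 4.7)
The plan is to reduce to directed colimits as in the proof of Theorem~\ref{T:filteredclosed} (via \cite[Theorem~1]{AndrekaNemeti}) and then exhibit the object $\cA(\Sigma(F))$ constructed in Theorem~\ref{T:directed_closed} as the filtered colimit in $\mathsf{Ring}$ as well. For a directed system $F \colon \cF \to \mCl$, let $S = \colim_{\mathsf{Ring}} G \circ F$ with structure maps $c_i \colon \cA(\Sigma_i) \to S$. Since each $f_i \colon \cA(\Sigma_i) \to \cA(\Sigma(F))$ is in particular a ring homomorphism, $\cA(\Sigma(F))$ is a cocone in $\mathsf{Ring}$, so the universal property of $S$ yields a canonical ring map $\psi \colon S \to \cA(\Sigma(F))$ with $\psi \circ c_i = f_i$. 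The goal is to show $\psi$ is an isomorphism.

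Surjectivity of $\psi$ is relatively direct: by Corollary~\ref{C:big-admissible}, every cluster variable of $\cA(\Sigma(F))$ is of the form $f_i(y_i)$ for some $y_i \in \cA(\Sigma_i)$, and by filteredness any finite collection of cluster variables lifts to a common $\cA(\Sigma_j)$. Since $\cA(\Sigma(F))$ is generated as a ring by its cluster variables, every element lies in the image of some $c_j$, so $\psi$ is surjective.

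The main obstacle is injectivity: given $a \in \cA(\Sigma_i)$ with $f_i(a) = 0$, I must exhibit $k \geq i$ with $f_{ik}(a) = 0$ in $\cA(\Sigma_k)$. I would use the Laurent phenomenon to write $a = P(\X_i)/M(\ex_i)$ with $P$ an integer polynomial in $\X_i$ and $M$ a monic monomial in $\ex_i$. Since $f_i$ is inducible (axiom iMCM), $f_i(M)$ is a product of nonzero integers and exchangeable cluster variables and thus nonzero in $\bQ(\X)$, so $P(f_i(\X_i)) = 0$ in $\bQ(\X)$. I would then show that for $k$ large enough, the map $f_{ik}$ already agrees with $f_i$ on the finite support of $P$ in the following sense: integer specialisations $f_i(x) \in \bZ$ are already witnessed at level $k$, cluster-variable targets $f_i(x) \in \X$ are attained by their unique preimages in $\X_k$ (via Corollary~\ref{C:big-admissible}), and any coincidence $f_i(x) = f_i(x')$ of frozens (forced by Lemma~\ref{L:ws}\eqref{ws1}) is realised as $f_{ik}(x) = f_{ik}(x')$. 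This stabilisation is exactly what is guaranteed by the set-theoretic filtered-colimit structure underlying Definition~\ref{D:ind-seed}, applied to the finite set of variables appearing in $P$.

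Once this stabilisation is established, $P(f_{ik}(\X_i)) \in \bQ(\X_k)$ is obtained from $P(f_i(\X_i)) \in \bQ(\X)$ by replacing each distinct cluster variable $y_j \in \X$ in the support with its unique lift $y_{j,k} \in \X_k$, while integers and identified frozens are handled identically. Since both $\X$ and $\X_k$ are algebraically independent in their respective fields of rational functions, the vanishing $P(f_i(\X_i)) = 0$ amounts to an integer-coefficient polynomial identity in the distinct lifted variables, which must therefore also hold in $\bQ(\X_k)$; hence $P(f_{ik}(\X_i)) = 0$ and so $f_{ik}(a) = 0$ in $\cA(\Sigma_k)$, completing injectivity and thereby the proof.
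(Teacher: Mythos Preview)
Your proof is correct and follows the same overall strategy as the paper: reduce to directed colimits via \cite{AndrekaNemeti}, then identify the explicit object $\cA(\Sigma(F))$ from Theorem~\ref{T:directed_closed} with the colimit in $\mathsf{Ring}$. The paper phrases the last step dually, directly verifying the universal property of $\cA(\Sigma(F))$ in $\mathsf{Ring}$: given an arbitrary cocone $R$ with maps $g_i$, it defines the comparison map $\varphi\colon \cA(\Sigma(F)) \to R$ on cluster-variable generators by $\varphi(y)=g_i(y_i)$, invoking Corollary~\ref{C:big-admissible}. Your surjectivity argument is exactly this step read in the other direction. Your injectivity argument---showing that $f_i(a)=0$ forces $f_{ik}(a)=0$ for large $k$ via the Laurent phenomenon, finite support, stabilisation of integer specialisations and frozen coincidences, and algebraic independence of clusters---is precisely the well-definedness of $\varphi$ as a ring homomorphism, spelled out in full. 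The paper simply asserts that the assignment on generators ``yields the desired universal map'' and leaves this verification implicit, so your route is the same but strictly more detailed on the one point the paper elides.
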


\begin{proof}
    By \cite{AndrekaNemeti} it is enough to show it commutes with directed colimits. Consider a directed system $F \colon \cF \to \mCl$ with $F(i) = \cA(\Sigma_i)$ for all $i \in \cF$. Then the ring $G(\colim F) = \cA(\Sigma(F))$ is a co-cone of $G \circ F$. Let $R$ be a co-cone of $G \circ F$  with maps $g_i \colon \cA(\Sigma_i) \to R$. The ind-cluster algebra $\cA(\Sigma(F))$ is generated, as a ring, by its cluster variables, and by Corollary \ref{C:big-admissible} for every cluster variable $y \in \cA(\Sigma(F))$ there exists an $i \in \cF$ such that for all $k \geq j \geq i$ we have $y = f_j(y_j)$ for some cluster variable $y _j$ with $f_{jk}(y_j) = y_k$. Setting $\varphi(y) = g_i(y_i)$ yields the desired universal map $\varphi \colon \cA(\Sigma(F)) \to R$ in $\mathsf{Ring}$.
\end{proof}

%

\subsection{Compact objects and ind-objects}

We denote by $\mCl^{\mathrm{f}}$ the full subcategory of $\mCl$ whose objects are rooted cluster algebras of finite rank. Analogously to the way finite sets control the category of all sets (finite and infinite) via filtered colimits, finite rank cluster algebras control $\mCl$. In this instance, one talks about {\em compact objects}.

\begin{theorem}\label{T:compact}
	The compact objects in $\mCl$ are precisely the rooted cluster algebras of finite rank.
\end{theorem}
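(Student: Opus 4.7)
The plan is to prove both inclusions. For one direction I must show that every rooted cluster algebra of finite rank is compact (meaning $\Hom_{\mCl}(\cA(\Sigma),-)$ commutes with filtered colimits, which exist by Theorem~\ref{T:filteredclosed}); for the other, every compact object of $\mCl$ has finite rank. The backward direction is the cleaner one, so I would dispatch it first.

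\textbf{Compact implies finite rank.} Suppose $\cA(\Sigma)$ with $\Sigma=(\X,\ex,B)$ is compact. By Theorem~\ref{T:colimit}, realise $\cA(\Sigma)\cong \colim_{i\in\cF}\cA(\Sigma_i)$ as a filtered colimit of rooted cluster algebras of finite rank in $\mCl$, with colimit maps $f_i\colon \cA(\Sigma_i)\to\cA(\Sigma)$. By compactness, the identity $\mathrm{id}_{\cA(\Sigma)}$ factors through some $\cA(\Sigma_i)$, i.e.\ there is $i\in\cF$ and $\psi\colon \cA(\Sigma)\to\cA(\Sigma_i)$ in $\mCl$ with $f_i\circ\psi=\mathrm{id}$. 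Restricting to initial clusters, for each $x\in\X$ one has $f_i(\psi(x))=x$. Because $x$ is a formal cluster variable (an indeterminate), not an integer, and $f_i$ acts as the identity on $\bZ$, the value $\psi(x)$ cannot lie in $\bZ$; hence $\psi(x)\in\X_i$. Moreover $x\mapsto\psi(x)$ is injective, as $f_i\circ\psi=\mathrm{id}$. Therefore $|\X|\leq|\X_i|<\infty$.

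\textbf{Finite rank implies compact.} Let $|\X|<\infty$, let $F\colon\cF\to\mCl$ be a filtered diagram, and use the explicit directed colimit construction $\cA(\Sigma(F))$ of Theorem~\ref{T:directed_closed} (which suffices by \cite{AndrekaNemeti}). I must show that the canonical map
\[
    \Phi\colon\colim_{i\in\cF}\Hom_{\mCl}(\cA(\Sigma),\cA(\Sigma_i))\ \longrightarrow\ \Hom_{\mCl}(\cA(\Sigma),\cA(\Sigma(F)))
\]
is a bijection. For surjectivity, take $\varphi\colon\cA(\Sigma)\to\cA(\Sigma(F))$. By the Laurent phenomenon $\varphi$ is determined by its finitely many values on $\X$. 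Each $\varphi(x)\in\X\cup\bZ$ is either an integer (lift trivially) or, via Remark~\ref{R:ind-cluster}, originates from some $y_{i_x}\in\X_{i_x}\cup\bZ$ with $f_{i_x}(y_{i_x})=\varphi(x)$. Using Lemma~\ref{L:well-defined-seed}(1), when $\varphi(x)\in\ex$ I may choose $i_x$ large enough that the lift is uniquely exchangeable, and if $\varphi(x)\in\bZ\setminus\{0\}$ then iMCM for $\varphi$ forces the lift to lie in $\bZ\setminus\{0\}$. Since $\X$ is finite and $\cF$ is filtered, I collect these lifts at a single $i\in\cF$, obtaining a set map $\psi\colon\X\to\X_i\cup\bZ$ with $f_i\circ\psi=\varphi$ on $\X$ satisfying CM1 and iMCM. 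For injectivity of $\Phi$: if $\psi_1\colon\cA(\Sigma)\to\cA(\Sigma_i)$ and $\psi_2\colon\cA(\Sigma)\to\cA(\Sigma_j)$ satisfy $f_i\psi_1=f_j\psi_2$, then by the description of the colimit cluster (Remark~\ref{R:ind-cluster}) and finiteness of $\X$ their images agree after passing to a common index $k\geq i,j$, so $f_{ik}\psi_1=f_{jk}\psi_2$.

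\textbf{The main obstacle.} The technical step, and the one I would spend most care on, is verifying that the lift $\psi$ actually satisfies CM2, i.e.\ is a morphism in $\mCl$ and not merely a set map on initial clusters. The strategy is the following: Given a $\psi$-biadmissible sequence $\underline{x}=(x_1,\dots,x_\ell)$ in $\cA(\Sigma)$, I first note that since $f_i\circ\psi=\varphi$ as ring homomorphisms (by the Laurent phenomenon, as both agree on $\X$), $\underline{x}$ is also $\varphi$-biadmissible. Now I apply CM2 for $\varphi$, obtaining $\varphi(\mu_{\underline{x}}(y))=\mu_{\varphi(\underline{x})}(\varphi(y))$ for all $y\in\X$. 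Using Proposition~\ref{P:cone}(4) together with Lemma~\ref{L:attain finite submatrix} applied to the finite submatrix of $B$ indexed by $\psi(\underline{x})\cup N_B(\psi(\underline{x}))$, I can lift this identity back through $f_i$ (possibly enlarging $i$ within $\cF$ so that all exchange matrix entries involved in mutation along $\underline{x}$ are uniformly attained and all relevant $B$-neighbours admit unique preimages) to deduce $\psi(\mu_{\underline{x}}(y))=\mu_{\psi(\underline{x})}(\psi(y))$ in $\cA(\Sigma_i)$. Enlarging $i$ is harmless because the hom-set we are computing sits inside the filtered colimit. This step is where the proof genuinely uses the detailed structural results established in Sections~\ref{S:construction_seed} and~\ref{S:construction}, rather than formal category theory alone.
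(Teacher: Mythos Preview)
Your backward direction (compact implies finite rank) and the overall skeleton of the forward direction match the paper. The injectivity argument and the construction of the set-map lift $\psi$ on $\X$ are fine. The genuine gap is in your verification of CM2.

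Your strategy is: from $f_i\circ\psi=\varphi$ deduce
\[
    f_i\bigl(\psi(\mu_{\underline{x}}(y))\bigr)=\varphi(\mu_{\underline{x}}(y))=\mu_{\varphi(\underline{x})}(\varphi(y))=f_i\bigl(\mu_{\psi(\underline{x})}(\psi(y))\bigr),
\]
the last equality by Proposition~\ref{P:cone}(4), and then ``lift back through $f_i$''. But $f_i$ is not injective in general (frozen variables can be identified, and variables can be specialised to integers further along the system), so equality after applying $f_i$ does not give equality in $\cA(\Sigma_i)$. Proposition~\ref{P:cone}(4) only transports identities forward, not backward. The paper does not attempt to lift; it instead computes $\psi(\mu_x(x))$ directly, using Lemma~\ref{L:ws}(\ref{ws2}) applied to $\varphi$ to express each entry $b^i_{x_i u_i}$ of $B^i$ as $\sigma\sum_{\psi(u')=u_i} b_{xu'}$, together with Lemma~\ref{L:specialisation of neighbours} to control the integer factors, and thereby matches the two exchange monomials term by term. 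This is the step you are missing.

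There is a second, related issue. You allow the index $i$ to be enlarged depending on the biadmissible sequence $\underline{x}$. But a morphism in $\mCl$ must satisfy CM2 for \emph{all} biadmissible sequences at a single fixed $i$, and even a finite-rank cluster algebra has infinitely many admissible sequences; a filtered colimit of Hom-sets does not let you take a different representative for each $\underline{x}$. The paper resolves this by choosing $\ell$ once and for all so that the finite submatrix on $S_f=(\varphi(\X)\cap\tilde\X)\cup N_{\tilde B}(\varphi(\X)\cap\tilde\ex)$ is uniformly attained (Lemma~\ref{L:attain finite submatrix}), and then invoking Remark~\ref{R:uniform attainment preserved under mutation} to see that this uniform attainment persists under mutation, so the induction on sequence length goes through at the fixed index $\ell$.
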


\begin{proof}
	Let $\cA(\Sigma)$ be a rooted cluster algebra of finite rank with $\bX(\cA(\Sigma)) = \X \sqcup \bZ$. Let $F \colon \cF \to \mCl$ be a filtered system with $F(i) = \cA(\Sigma_i = (\X_i,\ex_i,B_i))$. We must show that the natural map
	\[
		\colim	\Hom(\cA(\Sigma),\cA(\Sigma_i)) \to \Hom(\cA(\Sigma), \colim F)
	\]
is an isomorphism. Injectivity follows from the fact that 
every morphism in $\mCl$ is determined by its value on the initial cluster, and that finite sets are compact in the category of sets.

It remains to show surjectivity. Set $\colim F = \cA(\tilde{\Sigma} = (\tilde{\X}, \tilde{\ex}, \tilde{B}))$ with colimit maps $f_i \colon \cA(\Sigma_i) \to \cA(\tilde{\Sigma})$ and consider an inducible melting cluster morphism $f \colon \cA(\Sigma) \to \cA(\tilde{\Sigma})$. This yields a map $\bX(f) \colon \bX(\cA(\Sigma)) \to \bX(\cA(\tilde{\Sigma})) = \tilde{\X} \sqcup \bZ$, which restricts to a map $\bX(f) \mid_{\X} \colon \X \to \bX(\tilde{\Sigma})$. We have $\bX(\tilde{\Sigma}) \cong \colim \bX \circ F$, and since $\X$ is finite and thus compact in the category of sets, there exists an $\ell \in \cF$ such that for all $i \geq \ell$ the map $\bX(f) \mid_{\X}$ factors through $\X_i \cup \bZ$, say via $g_i \colon \X \to \X_i \cup \bZ$, so that $\bX(f)\mid_{\X} = f_i \circ g_i \mid_{\X}$. Now there is some choice for this map $g_i$, and we want to execute this carefully to make sure it is compatible with our cluster structures. Consider the finite set 
\[
	S_f = \left(f(\X) \cap \tilde{\X}\right) \cup \left(N_{\tilde{B}}(f(\X) \cap \tilde{\ex})\right).
\] 
This is the set of cluster variables in $f(\X)$ together with all of the neighbours of exchangeable variables in $f(\X)$. Consider the finite submatrix $\tilde{B}_f$ of $\tilde{B}$ with rows and columns labelled by $S_f \cap \tilde{\ex}$ and $S_f$ respectively.
Let $\ell \in \cF$ be such that $\tilde{B}_f$ is uniformly attained at $\ell$. 
Let from now on $i \geq \ell$ and for $x \in \X$ with $f(x) \in \tilde{\X}$, let $f(x)$ be $\tilde{B}_f$-attained by $x_i \in \X_i$.  We define
\[
	g_i \colon \X \to \X_i \cup \bZ
\]
for all $x \in \X$ by 
\[
	g_i(x) = \begin{cases}
						f(x) & \text{ if $f(x) \in \bZ$} \\
						x_i & \text{ if $f(x) \in \tilde{\X}$}.
				\end{cases}
\]
Then we have $f_i \circ g_i \mid_{\X} = f \mid_{\X}$. For all $x \in \ex$ we have $f(x) \neq 0$ and thus also $g_i(x) \neq 0$. Therefore, by the Laurent phenomenon, the map $g_i$ extends to a unique ring homomorphism
\[
	g_i \colon \cA(\Sigma) \to \cA(\Sigma_i).
\]
It satisfies CM1 and iMCM. It remains to show that $g_i$ satisfies CM2. By induction over $\ell$, we show that $g_i$ commutes with mutation along biadmissible sequences of length $\ell$. This is trivially true for sequences of length $0$. Now assume it holds for sequences of length $\ell-1$. Let $\underline{x} = (x_1, \ldots, x_\ell)$ be $g_i$-biadmissible and consider its truncation $\underline{x}' = (x_1, \ldots, x_{\ell-1})$ of length $\ell-1$. By Proposition \ref{P:cone} we get a directed system $\mu_{f(\underline{x}')}(F)$ with colimit $\cA(\mu_{f(\underline{x}')}(\Sigma(F)))$. By Remark \ref{R:uniform attainment preserved under mutation} the finite submatrix $\mu_{f(\underline{x}')}(\tilde{B}_f)$ of the exchange matrix of $\mu_{f(\underline{x}')}(\Sigma)$ is uniformly attained at $\ell$. To show that the claim holds for $\underline{x}$ it is thus enough to show that it holds for sequences of length $1$, by passing from $\cA(\Sigma)$ to $\cA(\mu_{\underline{x}'}(\Sigma))$ and from $F$ to $\mu_{f(\underline{x}')}(F)$.

Let thus $(x)$ be $g_i$-biadmissible, that is $x \in \ex$ and $g_i(x) = x_i \in \ex_i$. Then also $f(x) \in \tilde{\ex}$ and $f(x)$ is $\tilde{B}_f$-attained by $x_i$. Let next $u_i = g_i(u) \in \X_i$ for some $u \in \X$. Then $f(u) = \tilde{u} \in S_f$ and $\tilde{u}$ is attained by $u_i$. By (\ref{ws2}) of Lemma \ref{L:ws} applied to the melting cluster morphism $f$ for all $u_i \in g(\X) \cap \X_i$ we obtain
\begin{eqnarray}\label{E:compact}
b^i_{x_i u_i } = \tilde{b}_{f(x)\tilde{u}} = \sigma \sum_{f(u') = \tilde{u}} b_{xu'} = \sigma \sum_{g_i(u') = u_i} b_{xu'},
\end{eqnarray}
and moreover, in the sum either all summands are positive or all summands are negative. Without loss of generality assume $\sigma = 1$.
We partition $\X$ as $\X = \X_{cl} \sqcup \X_{\bZ}$, where $\X_{cl} = \{x \in \X \mid f(x) \in \tilde{\X}\}$ and $\X_{\bZ} = \{x \in \X \mid f(x) \in \bZ\}$. We obtain
\begin{eqnarray*}
	g_i(\mu_{x}(x)) &=& \frac{\prod_{u \in \X \colon b_{xu} > 0}g_i(u)^{b_{xu}} + \prod_{v \in \X \colon b_{xv} < 0}g_i(v)^{-b_{xv}}}{g_i(x)} \\
	 &=& \frac{\prod_{u \in \X_{cl} \colon b_{xu} > 0}g_i(u)^{b_{xu}} + \prod_{v \in \X_{cl} \colon b_{xv} < 0}g_i(v)^{-b_{xv}}}{x_i} \\
	&=& \frac{\prod_{u_i = g_i(u) \colon b_{xu} > 0}u_i^{\sum_{g_i(u') = u_i} b_{xu'}} + \prod_{v_i = g_i(v) \colon b_{xv} < 0}v_i^{-\sum_{g_i(v') = v_i} b_{xv'}}}{x_i}\\
	&=& \frac{\prod_{u_i \in \X_i \colon b^i_{x_iu_i } > 0}u_i^{b^i_{x_iu_i }} + \prod_{v_i \in \X_i \colon b^i_{x_iv_i } < 0}v_i^{- b^i_{x_iv_i }}}{x_i}\\
	&=& \mu_{x_i}(x_i) = \mu_{g_i(x)}(g_i(x)).
\end{eqnarray*}
where the first and penultimate equalities are by definition of mutation, the second is by Lemma \ref{L:specialisation of neighbours}, the third is just a reformulation using that, if $g_i(u) = g_i(u')$ then $b_{xu}$ and $b_{xu'}$ have the same sign, and the fourth is by (\ref{E:compact}). Therefore $\cA(\Sigma)$ is compact.

To show that a cluster algebra of infinite rank is not compact, consider a seed $\Sigma = (\X, \ex,B)$ with $|\X| = \infty$, and the identity map $\mathrm{id}_{\cA(\Sigma)} \colon \cA(\Sigma) \to \cA(\Sigma)$. By Theorem \ref{T:colimit} we can write $\cA(\Sigma)$ as a filtered colimit of finite rank cluster algebras. 
If $\mathrm{id}_{\cA(\Sigma)}$ would factor through one of these finite rank cluster algebras $\cA(\Sigma_i = (\X_i,\ex_i,B_i))$, then in particular the identity map restricted to the initial clusters, $\mathrm{id}_{\X} \colon \X \to \X$ would factor through the finite set $\X_i$. This is absurd.
\end{proof}

\begin{corollary}\label{C:ind-objects}
	The ind-objects of $\mCl^{\mathrm{f}}$ are precisely the cluster algebras of infinite rank.
\end{corollary}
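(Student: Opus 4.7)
The plan is to deduce this corollary as a formal consequence of the three main results established immediately beforehand: Theorem \ref{T:filteredclosed} (the category $\mCl$ has filtered colimits), Theorem \ref{T:colimit} (every infinite rank rooted cluster algebra arises as a filtered colimit of finite rank ones in $\mCl$), and Theorem \ref{T:compact} (the compact objects of $\mCl$ are exactly the finite rank rooted cluster algebras). By the standard characterisation, the ind-completion $\mathrm{Ind}(\mCl^{\mathrm{f}})$ is obtained by freely adjoining filtered colimits to $\mCl^{\mathrm{f}}$, and it embeds fully faithfully into any category $\cD$ admitting filtered colimits via the assignment $(\text{diagram} \mapsto \text{colimit in } \cD)$, provided that the image of $\mCl^{\mathrm{f}}$ lies among the compact objects of $\cD$. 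Taking $\cD = \mCl$ we satisfy both conditions.

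The first step is to observe that the inclusion $\mCl^{\mathrm{f}} \hookrightarrow \mCl$ extends canonically to a functor $\mathrm{Ind}(\mCl^{\mathrm{f}}) \to \mCl$ by sending a filtered diagram $F \colon \cF \to \mCl^{\mathrm{f}}$ to its filtered colimit in $\mCl$, which exists by Theorem \ref{T:filteredclosed} and was constructed explicitly as $\cA(\Sigma(F))$ in Theorem \ref{T:directed_closed}. Because finite rank rooted cluster algebras are compact in $\mCl$ by Theorem \ref{T:compact}, the universal property of ind-completion ensures this extension is fully faithful. The second step is essential surjectivity: given any rooted cluster algebra $\cA(\Sigma)$, if its rank is finite it is the (constant) colimit of itself, and if its rank is infinite, then by Theorem \ref{T:colimit} it is isomorphic in $\mCl$ to a filtered colimit of finite rank rooted cluster algebras, hence lies in the essential image.

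Combining these two steps yields the equivalence $\mathrm{Ind}(\mCl^{\mathrm{f}}) \simeq \mCl$. The corollary then follows, since the rooted cluster algebras (of arbitrary rank) are precisely the objects of $\mCl$, and those of infinite rank are precisely the non-compact ones, which must arise as non-constant filtered colimits of compact objects. The only subtle point in the argument is the fully faithfulness of the extension, which boils down to verifying the compactness hypothesis; this is handled by Theorem \ref{T:compact}. No new computation is required: the corollary is genuinely a formal consequence of the already-established closure, colimit, and compactness results, and this is why it appears as a corollary rather than a theorem.
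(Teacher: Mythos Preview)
Your proof is correct and takes essentially the same approach as the paper: both combine Theorems \ref{T:filteredclosed}, \ref{T:colimit}, and \ref{T:compact} with the standard characterisation of the ind-completion (the paper cites \cite[Corollary~6.3.5]{KashiwaraSchapira} for this, whereas you spell out the fully faithful extension and essential surjectivity explicitly). No substantive difference.
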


\begin{proof}
	We have a fully faithful embedding $\mCl^{\mathrm{f}} \to \mCl$, and $\mCl$ has filtered colimits by Theorem \ref{T:filteredclosed}. By Theorem \ref{T:compact} the finite rank cluster algebras are the compact objects in $\mCl$. By Theorem \ref{T:colimit} all objects in $\mCl$ are filtered colimits of compact objects. The claim follows by \cite[Corollary~6.3.5]{KashiwaraSchapira}.
\end{proof}

Motivated by Corollary \ref{C:ind-objects}, we will refer to cluster algebras of infinite rank as {\em ind-cluster algebras}. This fits seamlessly into the classical definition of a cluster algebra, which asserts that the initial cluster of a `true' cluster algebra is to be finite.

\section{The coordinate ring $\iring$ is an ind-cluster algebra}


Recall the definition of $\Gr$ as stated in Def \ref{def:Gr} in the introduction which follows the conventions in \cite{segal1985loop} and \cite{pressley1985loop}. Our main focus though shall be on its respective coordinate ring as first introduced in Sato's seminal papers \cite{sato1981soliton} and \cite{sato1983soliton} in connection with the solutions of the KP-hierarchy.

Fioresi and Hacon investigate in \cite{FH-Sato} a ring $\iring$, which they argue can be viewed as the `coordinate ring' for the `Sato Grassmannian', in the sense that there is a bijection between closed points of $\Proj(\iring)$ and points of $\Gr$ whose coordinates satisfy the union of all Pl\"ucker relations of the finite Grassmannians; see  \cite[Proposition~2.10]{FH-Sato}. This is the same ring as discussed in \cite{sato1983soliton} using the Pl\"ucker embedding; see also the discussion of the Pl\"ucker embedding of $\Gr$ in \cite[\S8,\S10]{segal1985loop} and \cite[\S7.5]{pressley1985loop} where additional convergence conditions on Pl\"ucker coordinates are imposed. 

We show in Section \ref{S:Sato-ind-cluster} that the ring $\iring$ has a natural structure of an ind-cluster algebra, induced by the classical finite rank Grassmannian cluster algebra structures introduced by Scott \cite{Scott}. One of the applications of our definition and description of the ind-cluster algebra structure of this coordinate ring is that it identifies among the infinite set of equations making up the KP-hierarchy 
natural sets of relations for the $\tau$-function, the solution of the KP-hierarchy, which generate all the remaining ones. The mutations of its clusters provide a natural combinatorial framework to encode the algebraic dependence among the infinite relations of the KP-hierarchy. In fact, the KP-equation itself can be seen as a mutation in the coordinate ring $\iring$.

\subsection{The coordinate ring of the Sato-Segal-Wilson Grassmannian}\label{S:Sato-ind-cluster}

\begin{figure}\label{fig:BFcorr}
\centering
\includegraphics[width=1.\textwidth]{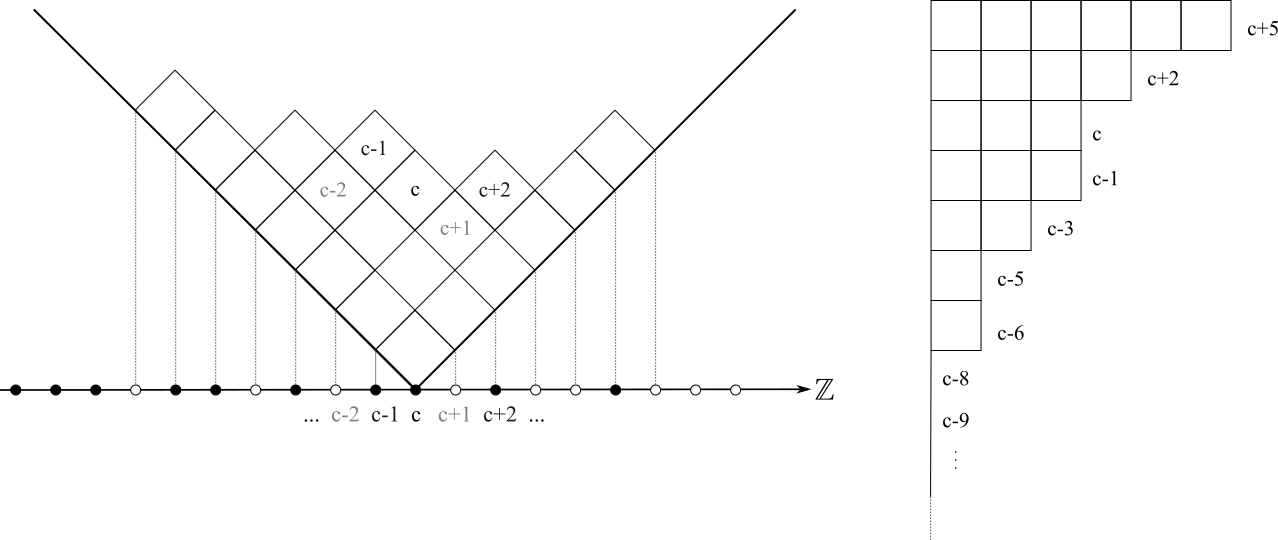} 
\caption{A graphical depiction of the bijection between Maya sequences and Young diagrams: each black go-stone in the left half of the Figure corresponds to a $45^\circ$ downward step and a white go-stone to a $45^\circ$ degree upward step when drawing the outline of a Young diagram. In the example shown the partition is $\lambda=(6,4,3,3,2,1)$ and the corresponding Maya sequence of charge $c$ is $a_\bullet=(c+5,c+2,c,c-1,c-3,c-5,c-6,c-8,c-9,\ldots)$, which are just the positions of the black go-stones on the left.}
\end{figure}


We recall the construction of $\iring$ from \cite{FH-Sato}. Denote by $\Gr_{m,n}$ the Grassmannian of $m$-dimensional subspaces in $\bC^{m+n}$, and by $\ring$ the homogeneous coordinate ring of $\Gr_{m,n}$ viewed as a projective variety under the Pl\"ucker embedding. The $\bC$-algebra $\ring$ is generated by the Pl\"ucker variables 
\[
	\{d_{j_1, \ldots, j_m} \mid -m \leq j_1 < \ldots < j_m \leq n-1\}\;.
\]
The latter are the minors obtained by taking the rows $-m,\ldots,-1$ and columns $j_1,\ldots,j_m$ in a $m\times (m+n)$ matrix which corresponds to a point in $\Gr_{m,n}$. It is convenient to extend the definition to any $m$-tuple $I$ from $[-m,n-1]=\{-m, \ldots, n-1\}$ by setting for any permutation $\sigma$ in the symmetric group $\mathfrak{S}_m$
\[
    d_{i_{\sigma(1)}, \ldots, i_{\sigma(m)}} = \begin{cases}
        \mathrm{sign}(\sigma) d_{i_1, \ldots, i_k} & \text{if } i_1 < \ldots < i_m\\
        0 & \text{ else}
        \end{cases}\;.
\]
In what follows we will sometimes use the interval notation $[a,b]=\{a,a+1,\ldots,b\}$ for subsets of consecutive integers to ease the notation. With these conventions in place we can formulate the Pl\"ucker relations in the usual manner: For any $(m-1)$-tuple $I = (i_1, \ldots, i_{m-1})$ with $-m\le i_1 < \ldots < i_{m-1}\le n-1$ and any $(m+1)$-tuple $J = (j_1, \ldots, j_{m+1})$ with $-m\le j_1 < \ldots < j_{m+1}\le n-1$ we have the quadratic relation
\begin{equation}\label{Pluecker0}
    \sum_{\ell = 1}^{m+1} (-1)^{\ell} d_{i_1, \ldots, i_{m-1},j_\ell}d_{j_1, \ldots, j_{\ell-1}, j_{\ell+1}, \ldots, j_{m+1}}=0\;.
\end{equation}
Consider the following directed system of $\bC$-algebras: for each $m' \geq m$ and $n' \geq n$ let  $r_{m,n,m',n'} \colon \ring  \to  \bC[\Gr_{m',n'}]$ be defined as the algebraic extension of
\begin{eqnarray}\label{eqn:r-maps}
	r_{m,n,m',n'} \colon d_{j_1, \ldots, j_m} \mapsto d_{-m', \ldots, -m-1, j_1, \ldots, j_m}.
\end{eqnarray}
In words, we simply identify the Pl\"ucker coordinates $d_{j_1, \ldots, j_m}$ of the `smaller Grassmannian' $\Gr_{m,n}$ with Pl\"ucker coordinates $d_{-m', \ldots, -m-1, j_1,\ldots j_m}$ of the larger one by pre-pending the integers in the interval $[-m',-m-1]$. Alternatively, we will label below the Pl\"ucker coordinates using Young diagrams which fit inside a rectangle of height $m$ and width $n$ and the above map simply embeds the same Young diagram into a larger $m'\times n'$ rectangle.
\begin{definition}[\cite{FH-Sato}] The coordinate ring $\iring$ of the Sato-Segal-Wilson Grassmannian $\Gr$ is defined as the colimit of the above system $(\ring,r_{m,n,m',n'})$ in the category of rings, with the natural inclusions denoted by $r_{m,n} \colon \ring \to \iring$.
\end{definition}
Following \cite{FH-Sato}, the $\bC$-algebra $\iring$ is generated by the set
	\[
		\{d_{a_\bullet} \mid a_\bullet \text{ is a Maya sequence of virtual cardinality $0$}\},
	\]
the elements of which are defined as follows: a {\em Maya sequence} is a strictly decreasing integer sequence $a_\bullet = (a_i)_{i \geq 1}$ for which there exists a $j \geq 1$ and $c\in\bZ$ such that $a_{k} = c-k$ for all $k \geq j$. The smallest such $j$ is the {\em rank $||a_\bullet||$ of $a_\bullet$}.  The integer $c$ is called the {\em virtual cardinality} in the mathematics literature \cite{segal1985loop} or the {\em charge} of $a_\bullet$ in the physics literature; see e.g. \cite{miwa2000solitons}. In what follows we always choose $c=0$ unless stated explicitly otherwise. 

A Maya sequence $a_\bullet = (a_i)_{i \geq 1}$ with $||a_\bullet || \leq m+1$ and $a_1 \leq  n-1$ corresponds to a Pl\"ucker coordinate for the finite Grassmannian $\Gr_{m,n}$
\begin{equation}\label{Maya2Pluecker}
	d_{a_{\leq m}} = d_{\ell_1, \ldots, \ell_m} \in \ring
\end{equation}
where we set $\ell_i = a_{m-i+1}$. If $a_\bullet$ is such a Maya sequence with $||a_\bullet || \leq m+1$ and $a_1 \leq  n-1$ we set
\[
	d_{a_\bullet} = r_{m,n}(d_{a_{\leq m}}).
\]

\begin{remark}\label{rmk:Maya2Young}\rm
(i) We recall that Maya sequences $a_\bullet$ of charge $c$ are in bijection with Young diagrams of partitions: consider the trivial sequence $a_i=c-i$ then we map the latter to the empty partition $\varnothing$. Given a partition $\lambda=(\lambda_1\ge\lambda_2\ge\ldots\lambda_\ell\ge 0)$ we can interpret the latter as an infinite sequence by appending infinitely many zeros at the end. Define the bijection by mapping $\lambda$ to a Maya sequence of charge $c$ via 
\begin{equation}\label{Young2Maya}
\lambda\mapsto a_\bullet(\lambda)=(\lambda_i-i+c)_{i\ge 1}\;.
\end{equation}
Under this bijection, the Maya sequences of charge $c$ satisfying $||a_\bullet || \leq m+1-c$ and $a_1 \leq  c+n-1$ are then precisely those partitions whose Young diagram fits into a bounding box of height $m$ and width $n$. In terms of the Young diagram of $\lambda$ the rank $||a_\bullet(\lambda)||$ is related to the height $\ell(\lambda)$ of the diagram via $\ell(\lambda)+1-c$, while $a_1$ is the given in terms of length of first row $\lambda_1$ via $a_1=c+\lambda_1-1$; see Figure \ref{fig:BFcorr} for an example of the bijection. In what follows we interchangeably use as labels Maya diagrams or partitions for the elements in $\iring$. We shall denote by $\Pi$ the set of all partitions $\lambda$.\medskip

\noindent (ii) In what follows it will also be convenient to describe the bijection between Maya sequences of charge 0 and partitions using the Frobenius notation $\lambda=(\alpha_1,\ldots,\alpha_\ell|\beta_1,\ldots,\beta_\ell)$ of partitions, where $\alpha_i=\lambda_i-i$ is the arm length (number of boxes to the right) and $\beta_i=\lambda'_i-i$ is the leg length (number of boxes below) of the $i$th box in the diagonal of the Young diagram of $\lambda$. The connection with the Maya diagram viewed as a subset $a_\bullet(\lambda)\subset\bZ$ is then given via the following set complements:
\begin{equation}\label{Frob2Maya}
\{\alpha_1,\ldots,\alpha_\ell\}=a_\bullet(\lambda)\setminus \bZ_{<0}\qquad
\text{and}\qquad
\{-\beta_1-1,\ldots,-\beta_\ell-1\}=\bZ_{<0}\setminus a_\bullet(\lambda)\;.
\end{equation}
Considering a general Maya sequence $a_\bullet$ (with $c$ not necessarily zero) the difference of cardinalities of the two finite sets $a_\bullet\setminus \bZ_{<0}$ and $\bZ_{<0}\setminus a_\bullet$ is called  the {\em virtual cardinality} $c$ of $a_\bullet$ in \cite{segal1985loop} because the associated subspace $H_{a_\bullet}\in\Gr(H)$ spanned by the sequence $\{z^{a_i+1}\}_{i\ge 1}$ has virtual dimension $c$.
\end{remark}

\subsection{The ind-cluster structure}\label{S:Cluster structure}

\begin{figure}\label{fig:finite_quiver}
\centering
\includegraphics[width=.9\textwidth]{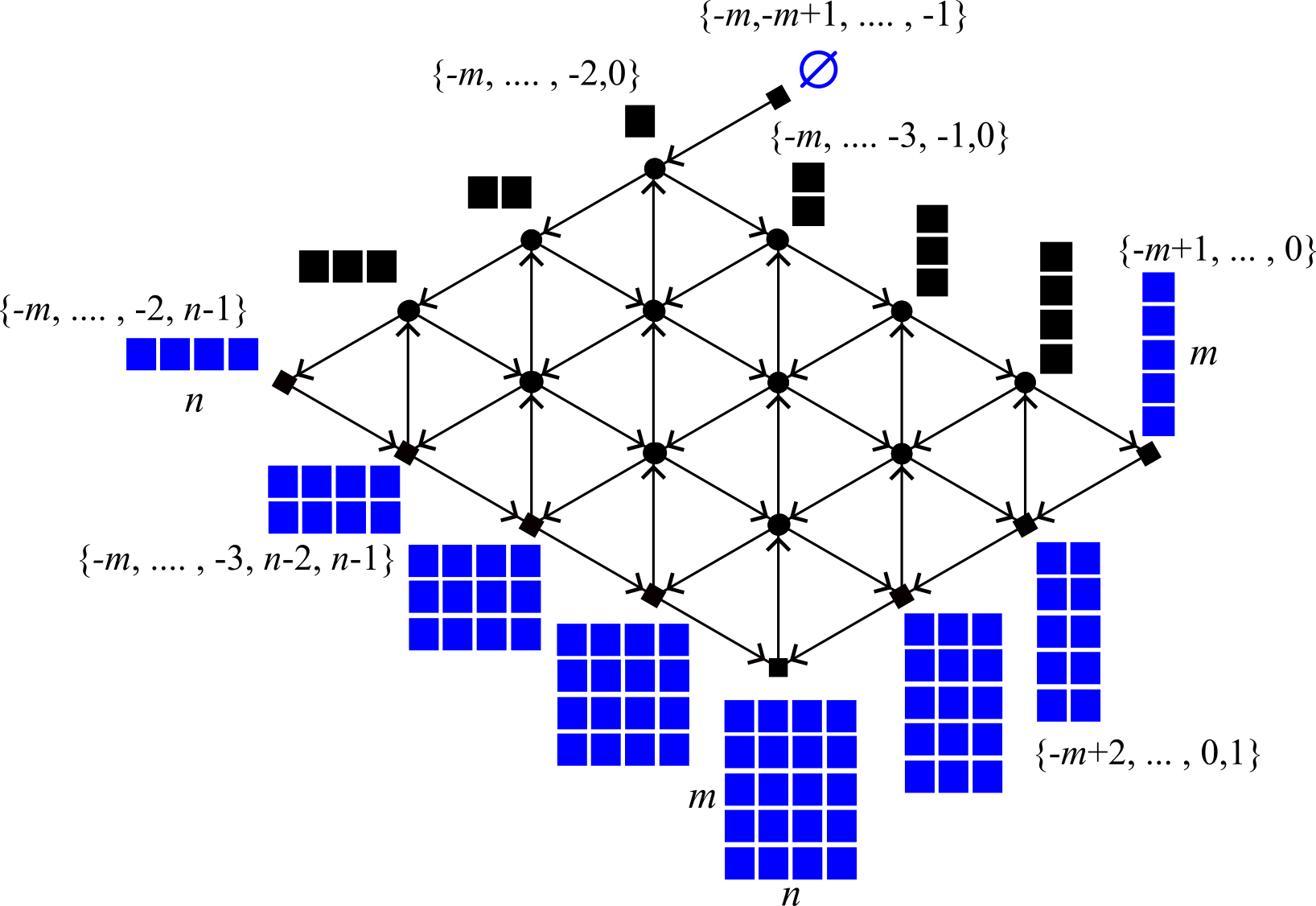} 
\caption{The quiver $Q_{m,n}$ for the coordinate ring $\ring$ of the (finite) Grassmannian $\Gr_{m,n}$. Here $n=4$ and $m=5$. The displayed sets of integers are the corresponding labels for the Pl\"ucker coordinates and the blue Young diagrams correspond to the frozen vertices.}
\end{figure}

The coordinate ring of the Sato Grassmannian has a natural cluster algebra structure induced by the classical cluster algebra structure in the finite-rank components $\ring$ introduced by Scott \cite{Scott}. More precisely, we have $\ring \cong \bZ[\Gr_{m,n}] \otimes_\bZ \bC$, where $\bZ[\Gr_{m,n}]$ is a cluster algebra with integer coefficients in line with Definition \ref{D:cluster algebra} which is generated, as a ring, by the Pl\"ucker coordinates $d_{j_1, \ldots, j_m}$, where $-m \leq j_1 < \ldots < j_m \leq n-1$, subject to the Pl\"ucker relations \eqref{Pluecker0}. 

For our construction the choice of the initial seed in the coordinate ring $\ring$, interpreted as rooted cluster algebra in $\mCl$, is crucial. We describe our choice of initial seed 
in terms of its ice quiver, cf.\ Remark \ref{R:quiver}. 

The {\em rectangle seed} $\Sigma_{m,n}$ of $\ring$ is obtained from the following ice quiver $Q_{m,n}$: Its vertices are labelled by rectangular Young diagrams $i\times j$ of height $0<i \leq m$ and width $0<j \leq n$, together with an additional vertex labelled by the empty Young diagram, denoted by the empty set $\varnothing$. The frozen vertices of $Q_{m,n}$ are given by the the following rectangular Young diagrams:
\begin{itemize}
    \item{The empty Young diagram $\varnothing$;}
    \item{the Young diagrams of maximal width, i.e. $i=m$ and $1 \leq j \leq n$;}
    \item{the Young diagrams of maximal height, i.e. $1 \leq i \leq m$ and $j=n$.}
\end{itemize}
We have the following exhaustive list of arrows in $Q_{m,n}$, where $1 \leq i \leq m$ and $1 \leq j \leq n$, where an arrow is defined if and only if both its source and its target is a well-defined vertex in $Q_{m,n}$:
\begin{itemize}
    \item{There is an arrow $\varnothing \to 1 \times 1$;}
    \item{There is an arrow $i \times j \to i \times (j+1)$;}
    \item{There is an arrow $i \times j \to (i+1) \times j$;}
     \item{There is an arrow $i \times j \to (i-1) \times (j-1)$.}
\end{itemize}
Figure \ref{fig:finite_quiver} illustrates the case $m = 5$ and $n=4$.
We use the natural bijection between partitions and Maya sequences from Remark \ref{rmk:Maya2Young} to identify vertices in $Q_{m,n}$ with elements of $\ring$: 
Each rectangluar Young diagram of height $\le m$ and width $\le n$ gives rise to a Maya sequence $a_\bullet$ (of charge $0$) with rank $||a_\bullet||\le m+1$ and $a_1\le n-1$. Thus, we can identify that a vertex with the Pl\"ucker coordinate $d_{a\le m}$ defined in \eqref{Maya2Pluecker}. Explicitly, the vertex $\varnothing$ corresponds to the Pl\"ucker coordinate $d_\varnothing=d_{-m, \ldots, -1}$, and the vertex $i \times j$ corresponds to the Pl\"ucker coordinate 
 \begin{equation}\label{dixj}
 	d_{i \times j} 
  = d_{-m, \ldots, -(i+1), j-i, \ldots, j-1}.
 \end{equation}

We set $\bZ[\Gr]$ to be the ring generated by the set
\[
    \{d_\lambda \mid \lambda \in \Pi\}.
\]
Thus we have $\iring \cong \bZ[\Gr] \otimes_\bZ \bC$, analogous to our notation for the finite rank components, where we have $\ring \cong \bZ[\Gr_{m,n}] \otimes_\bZ \bC$. Analogously to $Q_{m,n}$ we define the labelled ice quiver $Q_{\infty}$: Its non-frozen vertices are labelled by $d_\lambda$, where $\lambda$ ranges over all rectangular Young diagrams of height $i$ and width $j$, where $i,j \in \bZ_{>0}$, and it has a single frozen vertex labelled $d_\varnothing$, and arrows are defined as the union of the arrows in $Q_{m,n}$ for all $m,n>0$. We denote the seed associated to the ice quiver $Q_\infty$ by $\Sigma_{Q_\infty} = (\X_{Q_\infty}, \ex_{Q_\infty}, B_{Q_\infty})$.
\begin{theorem}\label{T:coordinate ring is colimit}
	The coordinate ring of the Sato-Segal-Wilson Grassmannian is a cluster algebra of infinite rank. More precisely, we have $\iring \cong \bZ[\Gr] \otimes_\bZ \bC$, where $\bZ[\Gr] \cong \cA(\Sigma_{Q_\infty})$ is a cluster algebra 
    with initial seed described by the labelled ice quiver $Q_\infty$.  
\end{theorem}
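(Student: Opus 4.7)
The plan is to realise $\bZ[\Gr]$ as the colimit, in $\mCl$, of the directed system $F \colon \cF \to \mCl$, where $\cF = \bZ_{>0} \times \bZ_{>0}$ with the product order, $F(m,n) = \cA(\Sigma_{m,n})$ is the rooted cluster algebra with initial rectangle seed, and the transition maps are the $r_{m,n,m',n'}$ of \eqref{eqn:r-maps}. Given such a realisation, Theorem~\ref{T:directed_closed} will identify $\colim F \cong \cA(\Sigma(F))$ in $\mCl$, and Proposition~\ref{P:forgetful_commutes} will yield $\bZ[\Gr] \cong \cA(\Sigma(F))$ in $\mathsf{Ring}$ (since $\bZ[\Gr]$ is by construction the colimit of the finite Grassmannian coordinate rings). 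The remaining task is to recognise the ind-seed $\Sigma(F)$ as similar to $\Sigma_{Q_\infty}$ in the sense of Definition~\ref{D:similar}, so that Proposition~\ref{P:similar_iso} delivers $\cA(\Sigma(F)) \cong \cA(\Sigma_{Q_\infty})$ in $\mCl$. Tensoring with $\bC$ then yields the statement for $\iring$.

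The first technical step is to show that each $r_{m,n,m',n'}$ is an inducible melting cluster morphism $\cA(\Sigma_{m,n}) \to \cA(\Sigma_{m',n'})$. Equation~\eqref{dixj} shows that $r_{m,n,m',n'}$ sends the initial cluster variable $d_{i \times j}$ of $\Sigma_{m,n}$ to $d_{i \times j}$ of $\Sigma_{m',n'}$ (both labelling the same Young diagram), which gives CM1. An exchangeable $d_{i \times j}$ in $\Sigma_{m,n}$ satisfies $0 < i < m \leq m'$ and $0 < j < n \leq n'$, hence remains exchangeable in $\Sigma_{m',n'}$, and no exchangeable generator is sent to $0$; this gives iMCM, while the frozen variables with $i = m$ or $j = n$ (other than $d_\varnothing$) are melted. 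Axiom CM2 is the principal difficulty, and it suffices to check it at a single mutation step by induction on the length of biadmissible sequences. For an exchangeable $d_{i \times j}$, the arrows incident to $d_{i \times j}$ in $Q_{m,n}$ (targets $d_{i \times (j+1)}$, $d_{(i+1) \times j}$, $d_{(i-1) \times (j-1)}$, and the analogous in-arrows) coincide with those in $Q_{m',n'}$, since enlarging the rectangle only adds new vertices and arrows incident to vertices formerly absent from $Q_{m,n}$. Consequently the exchange relation at $d_{i \times j}$ in $\Sigma_{m,n}$ maps under $r_{m,n,m',n'}$ to the exchange relation at $d_{i \times j}$ in $\Sigma_{m',n'}$.

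With the directed system $F$ in $\mCl$ at hand, Theorem~\ref{T:directed_closed} produces the ind-cluster algebra $\cA(\Sigma(F))$, and it remains to identify $\Sigma(F) = (\X,\ex,B)$ with $\Sigma_{Q_\infty}$. Lemma~\ref{L:well-defined-seed} together with the identifications above yields a bijection between $\X$ and $\X_{Q_\infty} = \{d_\varnothing\} \cup \{d_{i \times j} \mid i,j \geq 1\}$: each rectangle $d_{i \times j}$ appears in $\X_{m,n}$ from $(m,n) = (i,j)$ onward and becomes exchangeable from $(m,n) = (i+1,j+1)$ onward, while $d_\varnothing$ is frozen throughout. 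The exchange matrix of $\Sigma(F)$ stabilises for sufficiently large $(m,n)$ to the entries prescribed by the arrows of $Q_\infty$, so $\Sigma(F)$ is similar to $\Sigma_{Q_\infty}$. Applying Proposition~\ref{P:similar_iso} and the strategy of the first paragraph concludes the proof. The main obstacle is the CM2 verification for $r_{m,n,m',n'}$; it rests entirely on the uniform combinatorial description of the rectangle quivers $Q_{m,n}$ under the embedding $(m,n) \leq (m',n')$, together with the observation that melting the frozen boundary does not disturb the local arrow structure at any pre-existing exchangeable vertex.
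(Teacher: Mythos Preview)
Your proposal is correct and follows the same route as the paper: realise $\bZ[\Gr]$ as the directed colimit in $\mCl$ of the rectangle-rooted $\cA(\Sigma_{m,n})$ along the maps $r_{m,n,m',n'}$, invoke Theorem~\ref{T:directed_closed} to identify the colimit with $\cA(\Sigma(F))$, use Proposition~\ref{P:forgetful_commutes} to match with the ring colimit $\bZ[\Gr]$, and read off that $\Sigma(F)$ is (similar to) $\Sigma_{Q_\infty}$.

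The only substantive difference is that the paper does not verify CM2 by hand but simply cites \cite[Lemma~4.5]{Gcolimits} for the fact that the $r_{m,n,m',n'}$ are melting cluster morphisms with respect to the rectangle rootings. Your direct sketch is fine in outline, but the sentence ``it suffices to check it at a single mutation step by induction'' hides the real work: after mutating at $d_{i\times j}$ you are no longer comparing $Q_{m,n}$ and $Q_{m',n'}$ but their mutations, and you must know that the ``full subquiver on neighbourhoods of exchangeables'' property persists under mutation. This is true (mutation only alters arrows touching the mutated vertex and its neighbours, all of which lie in the image), and it is exactly what \cite[Lemma~4.5]{Gcolimits} packages; your closing paragraph gestures at it but does not state it. If you keep the direct argument, make that invariance explicit.
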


\begin{proof}
By \cite[Lemma~4.5]{Gcolimits} the maps $r_{m,n,m',n'} \colon \bZ[\Gr_{m,n}] \to \bZ[\Gr_{m',n'}]$ induced by the assignment rule (\ref{eqn:r-maps}) are melting cluster morphisms with respect to the rootings $\Sigma_{Q_{m,n}}$ for $\bZ[\Gr_{m,n}]$. They describe a filtered system with colimit $\tilde{\cA}$ in $\mCl$, which by Theorem \ref{T:directed_closed} is the rooted cluster algebra with initial seed induced by the labelled quiver $Q_\infty$. On the other hand, the colimit of the same directed system in the category of rings is given by $\bZ[\Gr]$. The statement now follows from Proposition \ref{P:forgetful_commutes}.
\end{proof}

    The Pl\"ucker relations hold in $\iring$ in the following sense. Let $r_{m,n} \colon \ring \to \iring$ denote the canonical maps. For $d_{j_1, \ldots, j_m} \in \ring$ we have
    \[
        d_{(\ldots,-m-1,j_1,\ldots,j_m)}= r_{m,n}(d_{j_1 \ldots j_m}).
    \]
    Note that the elements of the form $d_{(\ldots,m-1,j_1, \ldots, j_m)}\in\iring$ are labelled by the same partition $\lambda(J)$ as their finite counterparts $d_{j_1,\ldots,j_m}\in\ring$. We call them the {\em Pl\"ucker variables of $\iring$} and, when convenient, simply write $d_\lambda$ where $\lambda$ is any partition. 

Let $m,n>0$ be arbitrary but fixed. Suppose $(\lambda^{(1)},\ldots,\lambda^{(\ell)})$ is a tuple of partitions $\lambda^{(i)}$ whose Young diagrams fit inside the rectangle of height $m$ and width $n$ and, in addition, satisfy in $\ring$ the equation  $R(d_{\lambda^{(1)}},\ldots,d_{\lambda^{(\ell)}})=0$ for some polynomial $R\in\bC[d_\lambda~|~\lambda\subset (n^m)]$ where $(n^m)$ denotes the rectangular partition of height $m$ and width $n$. Then the same identity must also hold in $\iring$, because 
\[
r_{m,n}(R(d_{\lambda^{(1)}},\ldots,d_{\lambda^{(\ell)}}))=R(r_{m,n}(d_{\lambda^{(1)}}),\ldots,r_{m,n}(d_{\lambda^{(\ell)}}))=0\;.
\]
In particular, for any $-m \leq i_1 < \ldots < i_{m-1}$ and $-m \leq j_1 < \ldots < j_{m+1}$ the {\em Pl\"ucker relations}
\begin{equation}\label{Plucker}
        \sum_{\ell = 1}^{m+1} (-1)^\ell d_{(\ldots,-m-1,i_1,\ldots,i_{m-1},j_\ell)}d_{(\ldots,-m-1,j_1,\ldots,j_{\ell-1},j_{\ell+1},\ldots,j_{m+1})}=0
    \end{equation}
hold in $\iring$. Moreover, the Pl\"ucker relations determine {\em all} the relations that the Pl\"ucker variables of $\bC[\Gr]$ satisfy (\cite[Theorem~2.8]{FH-Sato}). All Pl\"ucker relations on $\bC[\Gr]$ are obtained via finite mutations on the vertices of $Q_{\infty}$. We will provide concrete examples below.

Another immediate consequence of having identified $\bZ[\Gr]$ as an ind-cluster algebra is an application of the so-called `Laurent phenomenon' and positivity of cluster algebras:
\begin{corollary}\label{cor:Laurent}
    (i) Every Pl\"ucker variable $d_\lambda$ is a Laurent polynomial $F_\lambda$ in the variables $d_{i\times j}$ (i.e. the ones labelled by rectangular Young diagrams) of the labelled ice quiver $Q_\infty$ with non-negative integer coefficients.
\medskip

\noindent (ii) Let $m\times n$ be the minimal bounding box containing the Young diagram of $\lambda$. Then the Laurent polynomial $F_\lambda$ only contains Pl\"ucker variables $d_{i\times j}$ with $1\le i\le m$ and $1\le j\le n$.
\end{corollary}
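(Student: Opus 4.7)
The plan is to reduce both parts to Scott's cluster structure on the finite Grassmannian coordinate rings $\bZ[\Gr_{m,n}]$ via the melting cluster morphisms $r_{m,n}\colon\bZ[\Gr_{m,n}]\to\bZ[\Gr]$. By the proof of Theorem~\ref{T:coordinate ring is colimit}, these are morphisms in $\mCl$ realising $\bZ[\Gr]$ as a colimit of the $\bZ[\Gr_{m,n}]$, and in particular they intertwine the rectangle seeds $\Sigma_{Q_{m,n}}$ and $\Sigma_{Q_\infty}$. The key structural feature I will exploit is that $r_{m,n}$ sends every initial cluster variable of $\Sigma_{Q_{m,n}}$ to the equally-labelled initial cluster variable of $\Sigma_{Q_\infty}$, with no specialisation to integers, so Laurent expansions transfer cleanly between the finite and the infinite setting.

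First I would verify that every Pl\"ucker variable $d_\lambda\in\bZ[\Gr]$ is a cluster variable of the ind-cluster algebra $\cA(\Sigma_{Q_\infty})$. Choose $m,n$ so that the Young diagram of $\lambda$ fits inside the $m\times n$ rectangle (the minimal bounding box suffices). By Scott's theorem, the corresponding Pl\"ucker coordinate $d_{J(\lambda)}\in\bZ[\Gr_{m,n}]$ is a cluster variable reached from $\Sigma_{Q_{m,n}}$ via some finite admissible sequence $\underline{x}$. Since $r_{m,n}$ specialises no initial variable to an integer and the relevant initial variables are $d_{i\times j}$ (which in $\Sigma_{Q_\infty}$ are exchangeable), the sequence $\underline{x}$ is $r_{m,n}$-biadmissible, and axiom CM2 identifies $d_\lambda=r_{m,n}(d_{J(\lambda)})$ as the cluster variable of $\cA(\Sigma_{Q_\infty})$ produced by the image sequence $r_{m,n}(\underline{x})$.

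With this in hand, fix $m\times n$ to be the minimal bounding box of $\lambda$. The Laurent phenomenon together with positivity of cluster algebras (applicable since $Q_{m,n}$ is skew-symmetric) expresses $d_{J(\lambda)}$ as a Laurent polynomial $F_\lambda$ with non-negative integer coefficients in the initial cluster $\{d_{i\times j}:1\le i\le m,\,1\le j\le n\}\cup\{d_\varnothing\}$ of $\Sigma_{Q_{m,n}}$. Applying the ring homomorphism $r_{m,n}$ to this identity, and using that $r_{m,n}(d_{i\times j})=d_{i\times j}$ and $r_{m,n}(d_\varnothing)=d_\varnothing$, yields the very same Laurent polynomial as an identity in $\bZ[\Gr]$, now involving only the variables asserted in (ii). Since the initial cluster of $\Sigma_{Q_\infty}$ is algebraically independent, uniqueness of Laurent expansion forces this to be the Laurent expansion of $d_\lambda$ predicted by the Laurent phenomenon for $\cA(\Sigma_{Q_\infty})$, proving (i) and (ii) simultaneously.

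The only genuine subtlety I foresee is the biadmissibility check above, that is, ensuring the mutation sequence in $\bZ[\Gr_{m,n}]$ realising $d_{J(\lambda)}$ actually lifts to a mutation sequence in $\cA(\Sigma_{Q_\infty})$ realising $d_\lambda$. This will follow from the explicit form of $r_{m,n}$ on the initial seed together with axiom iMCM: no initial variable is sent to an integer and mutations commute with $r_{m,n}$ along biadmissible sequences, so exchangeability is preserved throughout $\underline{x}$ by the defining properties of $\mCl$. Positivity is then inherited for free from the finite-rank case, so no separate infinite-rank positivity result is required.
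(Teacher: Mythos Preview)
Your proposal is correct and essentially matches the paper's intended argument: the paper states the corollary as an ``immediate consequence of having identified $\bZ[\Gr]$ as an ind-cluster algebra'' via the Laurent phenomenon and positivity, and gives no further proof. Your write-up fleshes this out carefully, and in particular your use of the minimal bounding box $m\times n$ together with the transfer along $r_{m,n}$ is exactly the right way to obtain the bound in~(ii), which the paper leaves entirely implicit; the biadmissibility concern you flag is handled by \cite[Lemma~4.5]{Gcolimits} (invoked in the proof of Theorem~\ref{T:coordinate ring is colimit}) and by Proposition~\ref{P:cone}.
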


Recall that for each fixed $m,n$ and $-m\le i_k,j_k\le n-1$ the corresponding subset of equations \eqref{Plucker} describe the image of $\Gr_{m,n}$ in $\Proj(\bigwedge^m\bC^{m+n})$ under the Pl\"ucker embedding. Only 
\[\dim\Proj(\bigwedge\nolimits^m\bC^{m+n}) - \dim \Gr_{m,n}=\binom{m+n}{m}-mn-1\]
of these Pl\"ucker relations \eqref{Plucker} are algebraically independent. Having identified $\iring$ as an ind-cluster algebra allows us to naturally identify maximal sets of algebraically independent Pl\"ucker variables from which one can generate via mutations all other Pl\"ucker variables and all Pl\"ucker relations. 
\begin{corollary}\label{cor:indep}
    The Pl\"ucker variables $d_\lambda$ contained in any cluster which is obtained from $Q_\infty$ via finite mutations are algebraically independent.
\end{corollary}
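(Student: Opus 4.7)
The plan is to reduce the statement to the classical finite-rank case, where algebraic independence of the cluster variables in any single cluster is due to Fomin and Zelevinsky \cite{FZI}. Since algebraic independence of an infinite set amounts to algebraic independence of each of its finite subsets, I fix finitely many Pl\"ucker variables $\tilde{y}_1,\ldots,\tilde{y}_N$ in a cluster $\tilde{\X}=\mu_{\underline{x}}(\X_{Q_\infty})$ obtained from $\X_{Q_\infty}$ by a finite $\Sigma_{Q_\infty}$-admissible sequence $\underline{x}$, together with a polynomial $P\in\bC[t_1,\ldots,t_N]$ satisfying $P(\tilde{y}_1,\ldots,\tilde{y}_N)=0$ in $\iring$. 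The aim is to conclude $P=0$.

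The first step is to lift the data to a single finite-rank $\ring$. By Theorem~\ref{T:coordinate ring is colimit}, $\iring\cong\cA(\Sigma_{Q_\infty})$ is the filtered colimit of the $\ring\cong\cA(\Sigma_{Q_{m,n}})$ in $\mCl$. Corollary~\ref{C:big-admissible} then produces a pair $(m,n)$ for which $\underline{x}$ has a unique lift to a $\Sigma_{Q_{m,n}}$-admissible sequence $\underline{x}^{(m,n)}$ with $r_{m,n}(\underline{x}^{(m,n)})=\underline{x}$; enlarging $(m,n)$ if necessary, each $\tilde{y}_i$ is the image under $r_{m,n}$ of a unique element $\tilde{y}_i^{(m,n)}$ of the mutated cluster $\tilde{\X}^{(m,n)}:=\mu_{\underline{x}^{(m,n)}}(\X_{Q_{m,n}})$ in $\ring$.

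The second step is to transport the polynomial relation down to a finite-rank coordinate ring. By Proposition~\ref{P:forgetful_commutes}, $\iring$ is the filtered colimit of the $\ring$ in the category of rings as well. A filtered colimit of rings is exact, so an element becomes zero in the colimit precisely when some transition map already kills it. Applied to $P(\tilde{y}_1^{(m,n)},\ldots,\tilde{y}_N^{(m,n)})\in\ring$, this yields some $(m',n')\geq(m,n)$ such that
\[
	P\bigl(\tilde{y}_1^{(m',n')},\ldots,\tilde{y}_N^{(m',n')}\bigr)=0 \quad\text{in }\bC[\Gr_{m',n'}],
\]
where $\tilde{y}_i^{(m',n')}:=r_{m,n,m',n'}(\tilde{y}_i^{(m,n)})$. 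Because $r_{m,n,m',n'}$ is an inducible melting cluster morphism, Proposition~\ref{P:cone}(4) ensures that each $\tilde{y}_i^{(m',n')}$ is a cluster variable in the single cluster $\mu_{\underline{x}^{(m',n')}}(\X_{Q_{m',n'}})$ of the finite-rank cluster algebra $\bC[\Gr_{m',n'}]$; moreover these cluster variables are pairwise distinct, since $r_{m',n'}(\tilde{y}_i^{(m',n')})=\tilde{y}_i$ and the $\tilde{y}_i$ are distinct in $\iring$. The classical result that any cluster of a finite-rank cluster algebra consists of algebraically independent elements of the ambient field then forces $P=0$.

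The main technical subtlety is ensuring that admissible sequences, mutated cluster variables, and their distinctness behave coherently under the transition maps of the colimit. All of this is handled by the structural results developed in Section~\ref{S:construction}, namely Proposition~\ref{P:cone}, Corollary~\ref{C:big-admissible}, together with the preservation of filtered colimits under the forgetful functor to rings recorded in Proposition~\ref{P:forgetful_commutes}.
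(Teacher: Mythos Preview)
Your argument is correct, but it takes a more elaborate route than the paper. The paper treats Corollary~\ref{cor:indep} as an immediate consequence of Theorem~\ref{T:coordinate ring is colimit}: once $\bZ[\Gr]$ is identified with the cluster algebra $\cA(\Sigma_{Q_\infty})$, Definition~\ref{D:cluster algebra} realises it as a subring of the rational function field $\bQ(\X_{Q_\infty})$, and any cluster obtained by a finite mutation sequence is, by the very definition of mutation, again a transcendence basis of this field---so algebraic independence is automatic without ever leaving the infinite-rank object. Your approach instead descends to the finite-rank pieces via the colimit machinery (Corollary~\ref{C:big-admissible}, Proposition~\ref{P:forgetful_commutes}) and then invokes the classical finite-rank statement. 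This is perfectly valid and has the virtue of making explicit how the finite theory controls the infinite one, but it is heavier than needed. One small citation issue: when you pass from $(m,n)$ to $(m',n')$ and claim the $\tilde{y}_i^{(m',n')}$ lie in the mutated cluster, the relevant input is axiom CM2 for the transition map $r_{m,n,m',n'}$ together with the compatibility in Corollary~\ref{C:big-admissible}, rather than Proposition~\ref{P:cone}(4), which concerns the colimit maps $f_i$.
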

From the case of $\ring$ for finite Grassmannians it is known \cite{oh2015weak} that every maximal set of {\em weakly separated} Pl\"ucker variables is a maximal set of algebraically independent Pl\"ucker variables. We extend this definition from finite to infinite sets to accommodate the labelling for $\iring$. 
\begin{definition}
        Let $I = (-\infty, -M) \cup \{\ell_1, \ldots, \ell_M\}$ and $J = (-\infty, -M') \cup \{\ell'_1, \ldots,\ell'_{M'}\}$, such that $M \leq M'$. We say that $I$ and $J$ are {\em weakly separated} if $\{\ell'_1, \ldots, \ell'_{M'}\}$ and $(-M', \ldots, -(M+1)) \cup \{\ell_1, \ldots, \ell_M\}$ are weakly separated in $\{-M', \ldots, \max\{\ell_M, \ell'_{M'}\}\}$. 
\end{definition}
For the convenience of the reader we reformulate the last criterion in terms of partitions. Given two partitions $\lambda,\mu$ denote by $A_\lambda,A_\mu$ the sets of their arm lengths (positions of the black go-stones $>-1$) and by $B_{\lambda},B_\mu$ the sets of their negative leg lengths minus one (the positions of the white go-stones $\le-1$) as defined previously. Then we have that
        \[
        I_\lambda\setminus I_\mu=(A_\lambda\setminus A_\mu)\;\bigsqcup\; (B_\mu\setminus B_\lambda)
        \]
        where $A_\lambda\setminus A_\mu\subset\bZ_{>0}$ and $B_\mu\setminus B_\lambda\subset \bZ_{\le 0}$.
\begin{figure}
\centering
\includegraphics[width=.9\textwidth]{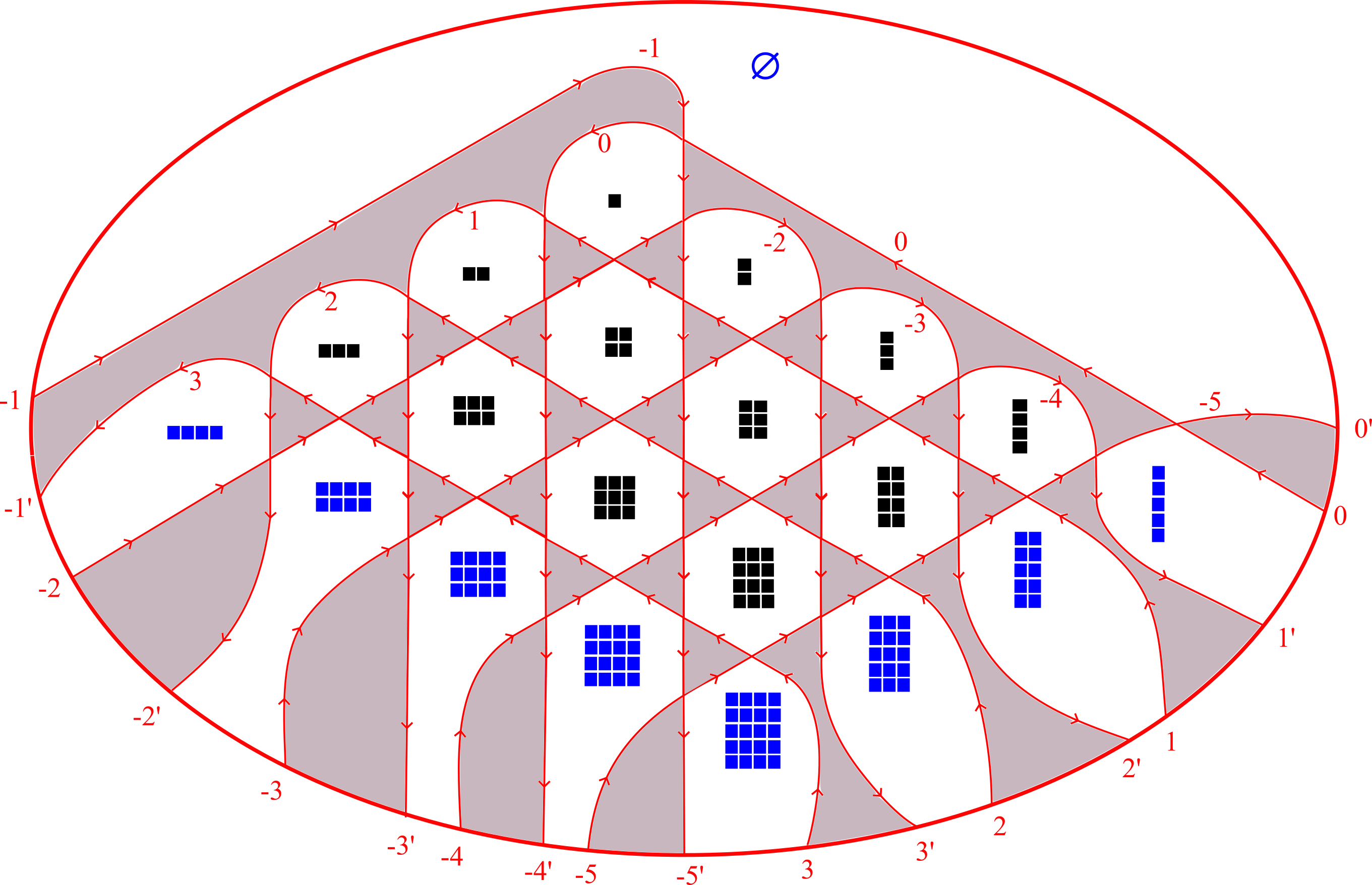} 
\caption{The rectangle-quiver $Q_{m,n}$ for the finite Grassmannian from Figure \ref{fig:finite_quiver} corresponds to the Postnikov diagram shown here.}
\label{fig:postnikovfinite}
\end{figure}

\begin{figure}
\centering
\includegraphics[width=.8\textwidth]{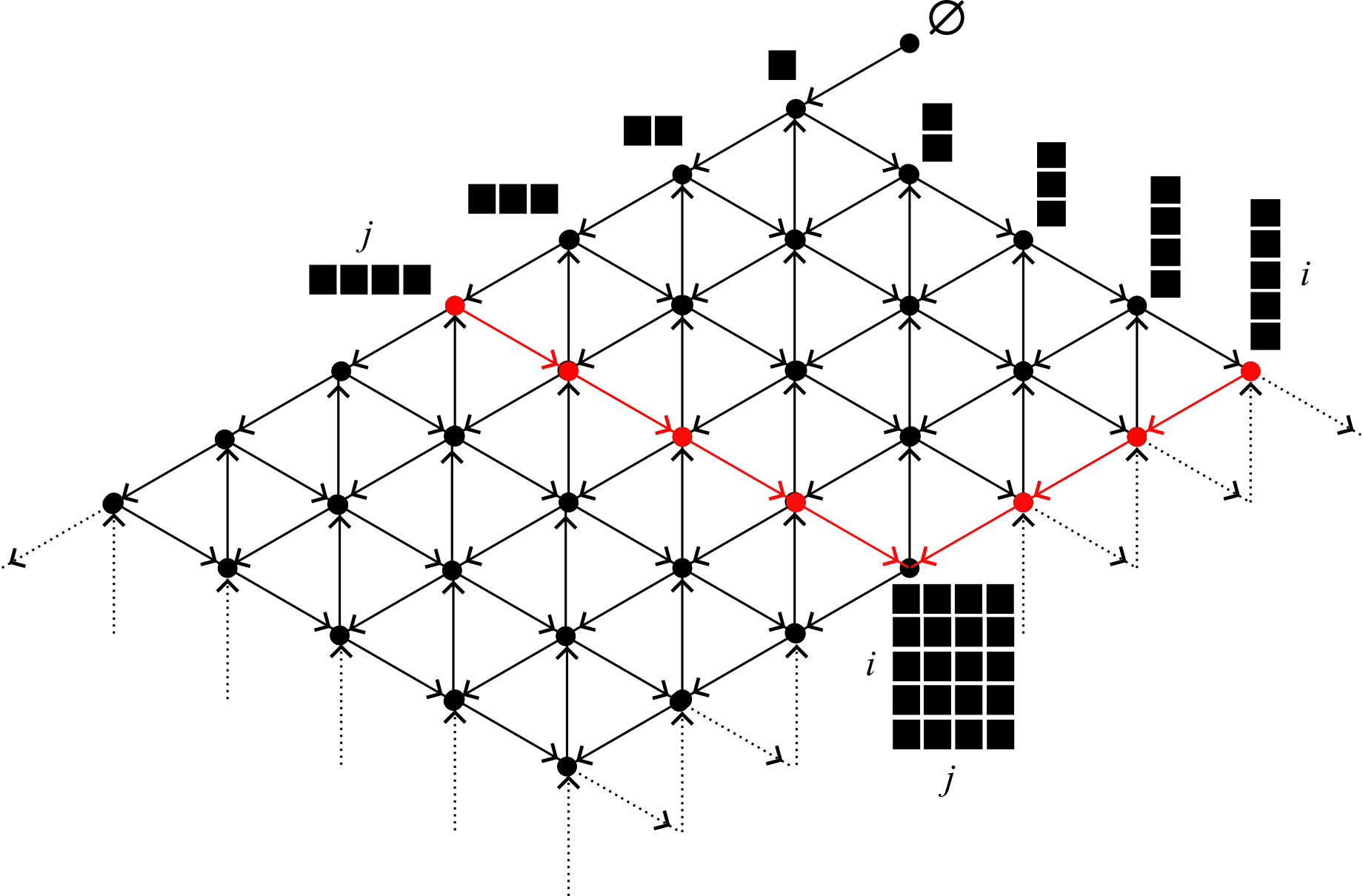} 
\caption{The (infinite) quiver $Q_\infty$ for the ind-cluster algebra $\iring$. The vertex in the $i$th row and $j$th column is labelled by the rectangular partition of width $i$ and height $j$.}
\label{fig:quiver}
\end{figure}

While the initial seed quiver $Q_\infty$ is natural in light of the colimit we have considered, it has the disadvantage that not every mutation on an exchangeable vertex leads to a Pl\"ucker relation of the form \eqref{Plucker} and may result in a cluster variable that is a Laurent polynomial of Pl\"ucker variables instead of a single new Pl\"ucker variable. This prompts the following definition.

\begin{definition}
    A cluster of $\iring \cong \cA(\Sigma_{Q_\infty}) \otimes_\bZ \bC$ whose elements are all Pl\"ucker variables is called a {\em Pl\"ucker cluster}. A Pl\"ucker seed is a seed of $\iring$ whose cluster is a Pl\"ucker cluster.
\end{definition}

The initial cluster of $\iring$ with the canonical rooting $\Sigma_{Q_\infty}$ is a Pl\"ucker cluster and we note that mutation of the Pl\"ucker variables $d_{i\times j}$ in \eqref{dixj} results in a three-term quadratic relation for $i=1$ or $j=1$ (the case of a single row or column) of the type \eqref{Plucker}, but that is not true in general when $i,j>1$ where one obtains equations of higher order.

 In the next section we give explicit examples of how to obtain via finite mutations an infinite sequence of quivers containing large subsets of exchangeable vertices where mutation results in three-term (or short) Pl\"ucker relations of type \eqref{Plucker}.

\begin{remark}\rm
We recall some reported solutions to the Pl\"ucker relations \eqref{Plucker} in terms of determinant formulae. While these have been stated in the context of actual Pl\"ucker coordinates for points on the Sato-Segal-Wilson Grassmannian they should hold true in the coordinate ring $\iring$. Provided that $d_\varnothing$ is invertible, the following Giambelli type formula for Pl\"ucker variables has been reported in the literature (see e.g. \cite[Prop. 2.1 and Cor. 2.1]{harnad2011schur}) 
    \begin{equation}\label{Giambelli}
    d_\lambda=\det(d_{(\alpha_i|\beta_j)})_{1\le i,j\le k}\,,
    \end{equation}
    where $\lambda=(\alpha_1,\ldots,\alpha_k|\beta_1,\ldots,\beta_k)$ in Frobenius notation and the $d_{(\alpha_i|\beta_j)}$ are the Pl\"ucker variables for hook partitions. For a generalised determinant formula when $d_\varnothing=0$, see e.g. \cite[Thm 1.1]{nakayashiki2017expansion}.
    
    Note that the Pl\"ucker variables appearing on the right hand side of \eqref{Giambelli} do in general not 
    belong to the same Pl\"ucker cluster. For instance, take $\lambda=(3,2)$ which in Frobenius notation is $\lambda=(2,0|1,0)$. Then \eqref{Giambelli} involves the Pl\"ucker variables $d_{(2|1)}$ and $d_{(0|0)}$. As explained above the corresponding Pl\"ucker variables $d_\lambda$ and $d_\mu$ can belong to the same cluster if and only if their corresponding sets (Maya diagrams) $a_\bullet(\lambda)$ and $a_\bullet(\mu)$ are weakly separated. We find for $\lambda=(2|1)$ and $\mu=(0|0)$ that $a_\bullet(\lambda)\setminus a_\bullet(\mu)=\{2,-1\}$ and $a_\bullet(\mu)\setminus a_\bullet(\lambda)=\{0,-2\}$ with $-2<-1<0<2$.
\end{remark}

\begin{proposition}
    Every Pl\"ucker cluster of the ind-cluster algebra $\cA(\Sigma_{Q_\infty})$ is a maximal weakly separated collection of Pl\"ucker variables. However, there are maximal weakly separated collections of Pl\"ucker variables which are not clusters of $\cA(\Sigma_{Q_\infty})$.
\end{proposition}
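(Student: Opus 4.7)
The plan is to handle the two claims separately: for the first, I reduce to the finite-rank case using the colimit description of clusters from Corollary~\ref{C:big-admissible} and Remark~\ref{R:cluster_colimits}; for the second, I construct an explicit counterexample exploiting the fact that every cluster of $\iring$ comes from only finitely many mutations.

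For the first statement, let $\tilde{\X}$ be a Pl\"ucker cluster of $\cA(\Sigma_{Q_\infty})$. By Corollary~\ref{C:big-admissible} there exist $(m_0,n_0)$ and a $\Sigma_{Q_{m_0,n_0}}$-admissible sequence $\underline{x}$ yielding $\tilde{\X}$ from $\Sigma_{Q_\infty}$, with induced clusters $\tilde{\X}_{m,n} = \mu_{\underline{x}_{m,n}}(\X_{Q_{m,n}})$ of $\bZ[\Gr_{m,n}]$ whose colimit as a directed system of sets (in the sense of Remark~\ref{R:cluster_colimits}) equals $\tilde{\X}$. The colimit maps $r_{m,n} \colon \bZ[\Gr_{m,n}] \to \bZ[\Gr]$ are injective ring homomorphisms sending Pl\"ucker generators to Pl\"ucker generators; since $\tilde{\X}$ consists only of Pl\"ucker variables, each $\tilde{\X}_{m,n}$ must also be a Pl\"ucker cluster of $\bZ[\Gr_{m,n}]$. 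By~\cite{oh2015weak} applied to Scott's cluster structure~\cite{Scott}, $\tilde{\X}_{m,n}$ is then a maximal weakly separated collection in the ambient finite index set. For pairwise weak separation in $\iring$: any two elements of $\tilde{\X}$ lie in some $\tilde{\X}_{m,n}$, and the weak-separation definition in $\iring$ reduces to weak separation in this finite set. For maximality: any Pl\"ucker variable $d_\nu \notin \tilde{\X}$ fits in some $\Gr_{m,n}$, so $d_\nu \notin \tilde{\X}_{m,n}$, and finite-case maximality yields a witness in $\tilde{\X}_{m,n} \subseteq \tilde{\X}$.

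For the second statement, observe that every cluster of $\iring$ differs from the initial rectangular cluster in only finitely many variables, so it suffices to exhibit a maximal weakly separated collection differing from the initial cluster in infinitely many entries. A direct three-term Pl\"ucker computation shows that mutation of $d_{1 \times j}$ in $Q_\infty$ yields the Pl\"ucker variable $d_{(j+1,j)}$. Let $\tilde{\X}_K$ denote the Pl\"ucker cluster of $\iring$ obtained by successively mutating at $d_{1 \times 1}, d_{1 \times 3}, \ldots, d_{1 \times (2K-1)}$; one checks directly on the quiver $Q_\infty$ that the exchange data at each $d_{1 \times (2k+1)}$ is unaffected by prior mutations at $d_{1 \times (2k'-1)}$ with $k' \leq k$, so the successive mutations produce the expected replacements $d_{(2k,2k-1)}$ for $1 \leq k \leq K$. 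Define
\[
\mathcal{W} = \{d_\varnothing\} \cup \{d_{(2k,2k-1)} : k \geq 1\} \cup \{d_{i \times j} : (i,j) \neq (1,2k-1) \text{ for all } k \geq 1\}.
\]
Any two elements of $\mathcal{W}$ lie in some $\tilde{\X}_K$ and so are weakly separated by the first part. For maximality, given $d_\nu \notin \mathcal{W}$: if $\nu = (1,2k-1)$, a direct Maya-sequence calculation (the symmetric differences being $\{2k-2,-2\}$ and $\{2k-1,2k-3\}$, which interleave) shows $d_\nu$ is not weakly separated from $d_{(2k,2k-1)} \in \mathcal{W}$; otherwise choose $(m,n)$ large enough that both $\nu$ and all relevant replacements $d_{(2k,2k-1)}$ fit in $\Gr_{m,n}$, and pick $K$ so that the restriction of $\tilde{\X}_K$ to $\bZ[\Gr_{m,n}]$ agrees with $\mathcal{W}$ on this range. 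Then $d_\nu$ is absent from this finite Pl\"ucker cluster, and the finite-case witness of non-separation lies in $\mathcal{W}$. Since $\mathcal{W}$ differs from the initial cluster in infinitely many entries, it is not a cluster of $\cA(\Sigma_{Q_\infty})$.

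The main obstacle I foresee is the maximality verification of $\mathcal{W}$: since $\mathcal{W}$ does not coincide with the restriction of any single Pl\"ucker cluster of any fixed $\bZ[\Gr_{m,n}]$, the finite-case witnesses provided by~\cite{oh2015weak} could in principle lie among the discarded rectangles $d_{1 \times (2k-1)}$ rather than in $\mathcal{W}$. The resolution is to calibrate $(m,n)$ and $K$ compatibly so that the discarded rectangles lying within $\Gr_{m,n}$ are precisely those already replaced by $d_{(2k,2k-1)}$ within $\tilde{\X}_K$, which can be arranged by enlarging $n$ if necessary.
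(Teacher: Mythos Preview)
Your argument for the first claim is essentially identical to the paper's: both reduce to the finite-rank result of Oh--Postnikov--Speyer via the colimit description of clusters from Corollary~\ref{C:big-admissible} and Remark~\ref{R:cluster_colimits}.

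For the second claim you take a genuinely different route. The paper argues non-constructively: it exhibits the weakly separated family $\{d_{(\ell+1,1)}\mid \ell\ge 1\}$, completes it (by Zorn) to a maximal weakly separated collection $W$, and observes that each $d_{(\ell+1,1)}$ fails to be weakly separated from the single-row variable $d_{(\ell)}$ in the initial cluster, so infinitely many initial variables would have to be replaced to reach $W$---impossible by finite mutation. Your approach instead builds an \emph{explicit} maximal collection $\mathcal W$ as the set-theoretic limit of the Pl\"ucker clusters $\tilde\X_K$ obtained by mutating at $d_{1\times 1},d_{1\times 3},\ldots,d_{1\times(2K-1)}$. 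This is correct and has the virtue of being fully concrete, but it costs you the extra verifications (independence of the mutations, identification of the new variable as $d_{(j+1,j)}$, and the maximality argument). The obstacle you flag is real, and your resolution is right in spirit but should be made precise: choose $m\ge 2$ and $n$ \emph{even} with $\nu\subseteq m\times n$, and set $K=n/2$. Then every odd single-row vertex inside $Q_{m,n}$ is exchangeable and gets replaced, so the resulting finite Pl\"ucker cluster is contained in $\mathcal W$ and supplies a witness there. Simply ``enlarging $n$'' is not quite enough---parity is what matters. The paper's approach avoids this bookkeeping entirely; yours gives an explicit example rather than an existence statement.
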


\begin{proof}
    Let $\tilde{\X}$ be a Pl\"ucker cluster of $\cA(\Sigma_{Q_\infty})$, which has initial cluster $\X= \X_{Q_\infty}$. Then there exists a finite $\Sigma_{Q_\infty}$-admissible sequence $\underline{x}$ such that $\mu_{\underline{x}}(\X) = \tilde{\X}$. Since $\cA(\Sigma_{Q_\infty})$ is a directed colimit in $\mCl$ (cf.\ Theorem \ref{T:coordinate ring is colimit}) we can apply Corollary \ref{C:big-admissible} and see that there exists an $i \in \cF$ and for all $j \geq i$ a unique $\Sigma_i$-admissible sequence $\underline{x}_j$ such that $f_j(\underline{x}_j) = \underline{x}$. We set $\tilde{\X}_j = \{x \in \mu_{\underline{x}_j}(\X_j) \mid f_{jk}(x) \in \X_k \text{ for all $k \geq j$}\} \subseteq \mu_{\underline{x}_j}(\X_j)$ and by Remark \ref{R:cluster_colimits} obtain that $\tilde{\X} = \bigcup_{j \geq i}(\tilde{\X}_j) / \sim$, where $y_j \sim y_k$ for $y_j \in \tilde{\X}_j$ and $y_k \in \tilde{\X}_k$ if and only if there exists an $\ell \geq j,k$ such that $f_{j\ell}(y_j) = f_{k\ell}(y_k)$. If the set $\tilde{\X}$ was not weakly separated, then the set $\tilde{\X}_j$ would not be weakly separated for some big enough $j$; a contradiction to $\tilde{\X}_j \subseteq \mu_{\underline{x}_j}(\X_j)$ \cite{oh2015weak}. Moreover, if $d_\lambda \in \iring$ is a Pl\"ucker variable such that $\tilde{\X} \cup d_\lambda$ is weakly separated, for big enough $j$ we have $d_\lambda = f_j(d_{\lambda_j})$ for a Pl\"ucker coordinate $d_{\lambda_j}$ in $\cA(\Sigma_j)$, and $\tilde{\X}_j \cup d_{\lambda_j}$ is weakly separated. Hence $d_\lambda \in \tilde{\X}_j$ by \cite{oh2015weak} and hence $d_\lambda \in \tilde{\X}$. Therefore, $\tilde{\X}$ is a maximal weakly separated collection of Pl\"ucker variables.

    Conversely, consider the weakly separated set of Pl\"ucker coordinates $\{b_\ell = d_{(-\infty,-3],-1,\ell} \mid \ell \geq 1\}$. This corresponds to a Young diagram which is a row of length $\ell+1$ followed by a row of length $1$. This can be completed to some maximal weakly separated set $W$. This is not a cluster: The infinite set $R = \{r_\ell = d_{(-\infty,-2],\ell-1} \mid \ell \geq 1\}$ is a subset of the initial cluster $\X = \X_{Q_\infty}$ of $\cA(\Sigma_{Q_\infty})$. For every $\ell \geq 1$ the pair $\{r_\ell,b_\ell\}$ is not weakly separated. Hence, the set $W$ cannot be reached from the initial cluster $\X$ by a finite sequence of mutations, as infinitely many elements of $R \subseteq \X$ would need to be replaced to make room for $W$. 
\end{proof}

\begin{remark}\rm
    There might be situations where we would want to consider all maximal weakly separated collections of Pl\"ucker variables as clusters for an appropriate ring associated to an infinite Grassmannian with a cluster-like structure. This will be addressed in a forthcoming paper on pro-cluster algebras \cite{gratz2025}.
\end{remark}

\subsection{Combinatorial description in terms of Postnikov diagrams}\label{S:combinatorial_description}

One of the advantages of describing the coordinate ring $\ring$ of a finite Grassmannian as a cluster algebra is that its algebraic relations can be iteratively generated from the chosen initial seed $\Sigma$ by successive mutations for which combinatorial descriptions are available. For example, Scott employed in \cite{Scott} diagrams introduced by Postnikov in \cite{Postnikov} to describe the Pl\"ucker clusters and the mutation procedure for $\ring$ and it is known that a general Pl\"ucker relation \eqref{Pluecker0} can be obtained via mutation from those which only involve three terms. We briefly recall the main definitions and results for finite Grassmannians before adapting them to the case of $\iring$. 

As before let $m,n>0$ be arbitrary but fixed. Denote by $\pi_{m,n}$ the following Grassmannian permutation of the set $\{-m,-m+1,\ldots,n-2,n-1\}$,
\[
\pi_{m,n}=\left(\begin{array}{ccccccc}
-m &-m+1&\ldots& n-m-1& n-m&\ldots& n-1\\
0'&1'&\ldots&(n-1)'&-m'&\ldots &-1'
\end{array}\right)
\]
and fix a convex $2(m+n)$-gon $P_{m,n}$ with vertices labelled $-m,-m',\ldots,n-1,(n-1)'$ clockwise.
\begin{definition}[Postnikov diagrams \cite{Postnikov}]
A $\pi_{m,n}$-diagram (or Postnikov diagram) is a tuple $(p_1,\ldots,p_{m+n})$ of oriented paths inside $P_{m,n}$, where each $p_i$ connects the source $i$ with its target $\pi_{m,n}(i)'$ subject to the following conditions:
\begin{itemize}
    \item[(P1)] the paths do not self-intersect;
    \item[(P2)] all intersections between different paths are transversal;
    \item[(P3)] running along a path $p_i$ from source to target, there is always an even number of intersections which alternate in their orientation: $p_i$ is intersected first from the left, then from the right, then from the left etc. 
    \item[(P4)] for any pair of mutually distinct paths the configuration shown in Figure \ref{fig:forbidden} is not allowed.
\end{itemize}
\end{definition}

Note that our labelling of the boundary of a Postnikov diagram\footnote{We also point out that Postnikov diagrams have been introduced for arbitrary permutations, not just the Grassmannian permutation $\pi_{m,n}$ we consider, but as it is only the latter which we need for describing the cluster algebra structure of $\ring$ we omit the general case.} is by integers $-m', -m, \ldots, (n-1), (n-1)'$, while \cite{Scott} uses labels $1, 1', \ldots, (m+n), (m+n)'$; see Figure \ref{fig:postnikovfinite}.

We collect some well-known additional notions and facts regarding $\pi_{m,n}$-diagrams from \cite{Postnikov} and \cite{Scott}:
\begin{itemize}
    \item[(P5)] The interior of $P_{m,n}$ is divided into two types of disjoint cells by the paths of a $\pi_{m,n}$-diagram: there are {\em odd cells} which have an oriented boundary and {\em even cells} which do not. No two cells of the same type share a common edge.
    \item[(P6)] There exists a bijection between $\pi_{m,n}$-diagrams $\Gamma$ and the labelled ice quivers $Q$ representing the Pl\"ucker clusters of $\ring$: Each even cell of a fixed diagram $\Gamma$ is mapped to a vertex of $Q$ with the frozen vertices corresponding to the even cells which share an edge with the boundary of $P_{m.n}$. Two vertices $v,v'\in Q$ are connected by an arrow $v\to v'$ if the corresponding even cells $c,c'\in\Gamma$ touch in one point and the direction of the arrow is aligned with the orientation of the intersecting paths in $\Gamma$. For example, in the $\pi_{m,n}$-diagram of Figure \ref{fig:postnikovfinite} we obtain for the even cells containing the empty partition $\varnothing$ and $\lambda=(1)$ the arrow $\varnothing\to\Box$. The labelling of the even cells, and thus the bijection, is fixed as follows: The Maya sequence corresponding to the Young diagram contains the path label $i$ if the path either runs beneath the cell from west to east or to the right of it from south to north. Doing this consistently for the $\pi_{m,n}$-diagram of Figure \ref{fig:postnikovfinite} the reader will obtain the ice quiver $Q_{mn}$ in Figure \ref{fig:finite_quiver} except for some additional arrows connecting the frozen vertices (which do not play a role in the cluster algebra structure as the frozen vertices are exempt from mutation).
    \item[(P7)] Any two $\pi_{m,n}$-diagrams can be obtained from each other through a finite sequence of `local moves' called {\em geometric exchange}. This move, which is only defined for even cells that form a `quadrilateral' (i.e. their boundary contains precisely four intersection points of paths) is depicted in Figure \ref{fig:geometricex}. The bijection between $\pi_{m,n}$-diagrams and quivers from (P2) is compatible with this `mutation' on quadrilateral cells, i.e. if the diagrams $\Gamma$ and $\Gamma'$ are related by a sequence of geometric exchanges at cells $c_1,\ldots,c_\ell$, then the corresponding quivers $Q(\Gamma)$ and $Q(\Gamma')$ are related by quiver mutation on the corresponding vertices $v_1,\ldots,v_\ell$.
\end{itemize}

Using the results for Postnikov diagrams for finite Grassmannians we now extend the discussion to the case of the ind-cluster algebra $\iring$. There is an obvious embedding of the rectangle-quivers $Q_{m,n}$ into the quiver $Q_\infty$ by identifying vertices which are labelled by the same (rectangular) partition, recall that $d_\lambda=r_{m,n}(d_\lambda)$ for any $\lambda\subset (n^m)$. Using the bijection from (P6) one easily verifies that the `infinite Postnikov diagram' $\Gamma_\infty$ shown in Figure \ref{fig:postnikov} -- when following the same rules for labelling even cells and drawing arrows as in the finite rank case -- leads to $Q(\Gamma_{\infty})=Q_\infty$. 

In drawing the infinite Postnikov diagram $\Gamma_\infty$ we have chosen the following conventions: Consider the circle $S^1$ with boundary points given by $\bZ \sqcup \bZ'$, that is, they are consecutively labelled, in a clockwise direction, by $ \ldots, -3, -2, -1, 0, 1, 2, ,3, \ldots$ followed by $\ldots, -3', -2', -1', 0', 1', 2', 3', \ldots$, cf.\ Figure \ref{fig:postnikov}. There are four accumulation points of boundary points, which do not count as boundary points themselves, and which we mark in the figure by $-\infty', \infty', -\infty, \infty$. We denote this ``$\infty$-gon'' by $P_\infty$. The diagram $\Gamma_\infty$ is a doubly-infinite sequence $(\ldots,p_{-1},p_0,p_1,\ldots)$ of oriented paths $p_i$ inside $P_\infty$ that connect the boundary point $i$ with $i'$ and satisfy (P1), (P2) and (P4). Moreover, we have instead of (P3) that each path is intersected in an alternating fashion, swapping between left and right; see Figure \ref{fig:postnikov}. One verifies from the construction that the paths still divide the interior of $P_\infty$ into even and odd cells; see (P7). 

\begin{definition}
We define a {\em $\Gamma_\infty$-Postnikov diagram} to be any diagram obtained from $\Gamma_\infty$ by finite sequences of geometric exchanges on quadrilateral even cells. 
\end{definition}
This definition ensures that any infinite Postnikov diagram $\Gamma$ only differs in a finite region from $\Gamma_\infty$. That is, there always exists an integer $k>1$ such that (1) the paths $p_{-k}$ and $p_{k-1}$ intersect below the Young diagram $\lambda=(k^k)$ and (2) below these paths all vertices are rectangular Young diagrams and the Postnikov diagram $\Gamma$ matches $\Gamma_\infty$ for all paths $p_j$ with $j\ge k-1$ or $j\le -k$, while for $-k<j<k-1$ the paths $p_j\in\Gamma$ follow the same arrangement as in $\Gamma_\infty$ after intersecting $p_{k-1}$ (if $j>0$) or $p_{-k}$ (for $j\le 0$). 

\begin{figure}
\centering
\includegraphics[width=.5\textwidth]{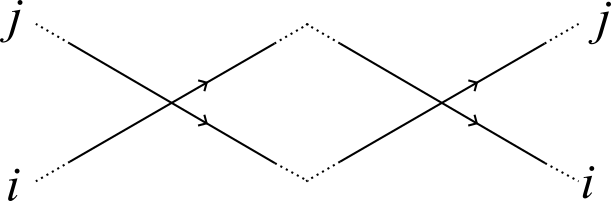} 
\caption{The displayed double crossing of two paths, here labelled $i$ and $j$, is not allowed in a Postnikov diagram.}
\label{fig:forbidden}
\end{figure}

\begin{figure}
\centering
\includegraphics[width=.8\textwidth]{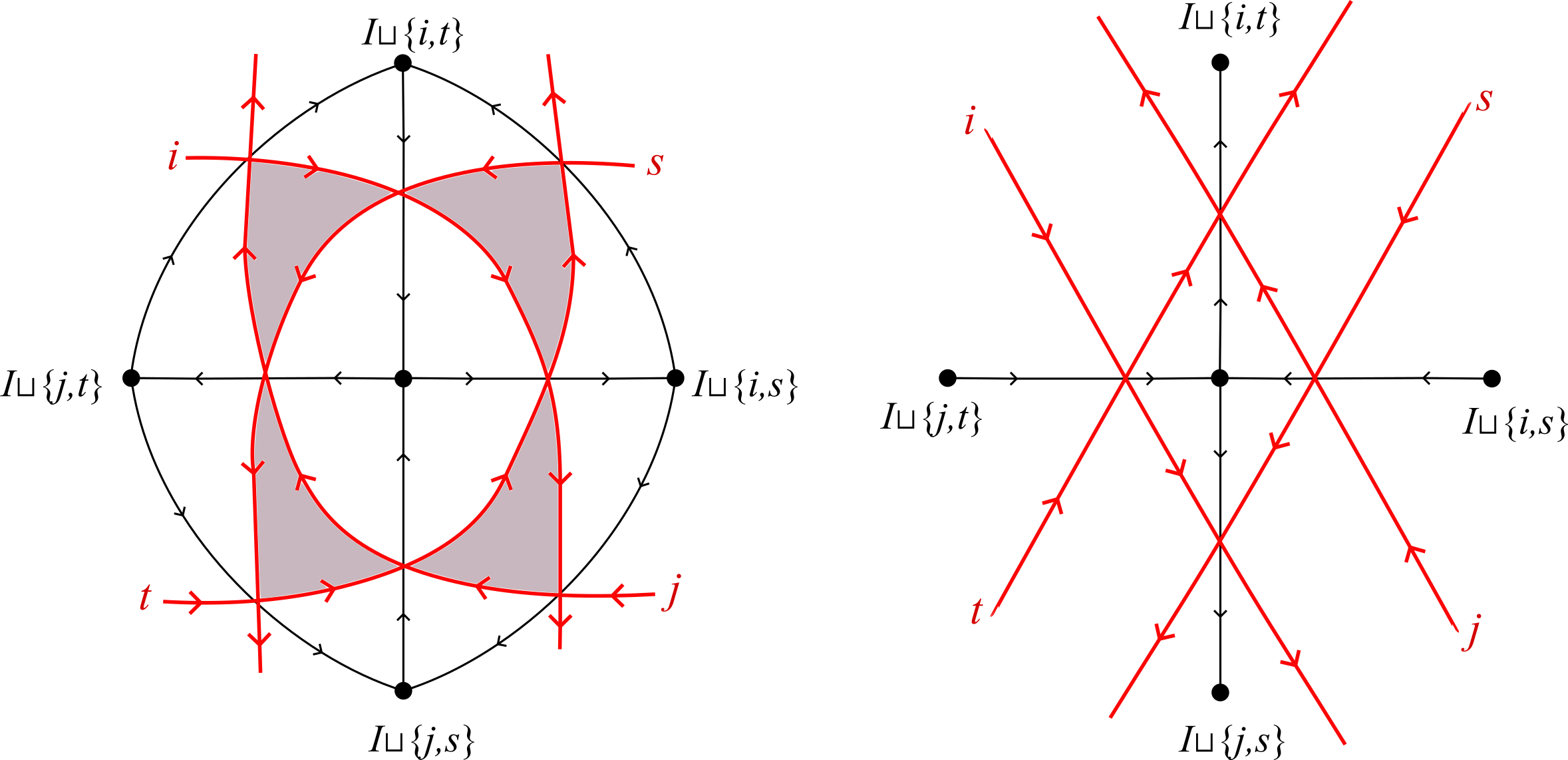} 
\caption{The `geometric exchange relation' for Postnikov diagrams (red) which describes the quiver mutation at the centre vertex (black).}
\label{fig:geometricex}
\end{figure}

\begin{example}\rm
The infinite Postnikov diagram shown in Figure \ref{fig:postnikov2quads4} matches $\Gamma_\infty$ below the paths $p_{4}$ and $p_{-5}$.
\end{example}

This example motivates that a Pl\"ucker cluster of $\iring$ looks `locally' like a Pl\"ucker cluster of some $\ring$ with $m,n$ sufficiently large.  

\begin{proposition}
    The set of $\Gamma_\infty$-Postnikov diagrams is in bijection with the set of Pl\"ucker clusters of $\iring$.
\end{proposition}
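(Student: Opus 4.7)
The plan is to use the directed colimit structure of $\iring$ established in Theorem~\ref{T:coordinate ring is colimit}, together with the finite-rank correspondence (P6)--(P7) between Pl\"ucker clusters of $\ring$ and $\pi_{m,n}$-diagrams, to reduce everything to the finite case. Both sides of the desired bijection are ``based'' at a canonical element: $\Gamma_\infty$-Postnikov diagrams are by definition generated from $\Gamma_\infty$ by finite sequences of geometric exchanges at quadrilateral even cells, and Pl\"ucker clusters of $\iring \cong \cA(\Sigma_{Q_\infty})$ are generated from $\X_{Q_\infty}$ by finite $\Sigma_{Q_\infty}$-admissible mutation sequences (Corollary~\ref{C:big-admissible}). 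The bijection will identify these two notions of ``local move.''

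For the forward map, a $\Gamma_\infty$-Postnikov diagram $\Gamma$ is, by definition, obtained from $\Gamma_\infty$ by a finite sequence of geometric exchanges $(c_1, \ldots, c_\ell)$, all taking place in a bounded region of $P_\infty$. Choose $m,n$ large enough that this region is contained in the $m \times n$-rectangular portion of $\Gamma_\infty$ whose associated quiver is $Q_{m,n}$. Then $(c_1, \ldots, c_\ell)$ is a sequence of geometric exchanges on the finite $\pi_{m,n}$-diagram obtained by truncating $\Gamma_\infty$, which via (P6)--(P7) translates into a mutation sequence on $Q_{m,n}$ producing a Pl\"ucker cluster of $\ring$; applying $r_{m,n}$ yields a Pl\"ucker cluster of $\iring$. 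For the reverse map, given a Pl\"ucker cluster $\tilde{\X}$ of $\iring$, Corollary~\ref{C:big-admissible} supplies a finite $\Sigma_{Q_\infty}$-admissible sequence $\underline{x}$ with $\mu_{\underline{x}}(\X_{Q_\infty}) = \tilde{\X}$; moreover, for $m,n$ sufficiently large (and using Corollary~\ref{cor:Laurent}(ii) to bound the supports of the Pl\"ucker variables appearing in $\tilde{\X}$), the sequence $\underline{x}$ lifts to a $\Sigma_{Q_{m,n}}$-admissible sequence $\underline{x}_{m,n}$ whose image under $r_{m,n}$ recovers $\tilde{\X}$. The cluster $\mu_{\underline{x}_{m,n}}(\X_{Q_{m,n}})$ is then itself a Pl\"ucker cluster of $\ring$, corresponding via (P6) to a $\pi_{m,n}$-diagram; gluing this back into $\Gamma_\infty$ outside the $m \times n$ region yields a $\Gamma_\infty$-Postnikov diagram.

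The main obstacle is ensuring that the mutation sequences in play can be chosen to pass only through Pl\"ucker clusters, equivalently, that each individual mutation corresponds to a geometric exchange on a quadrilateral cell rather than producing a non-Pl\"ucker Laurent polynomial. In the finite case this is the content of Scott's result \cite{Scott}: any two Pl\"ucker clusters of $\ring$ are connected by a sequence of short (three-term) Pl\"ucker mutations, which are precisely the mutations at quadrilateral cell-vertices. One may therefore replace $\underline{x}_{m,n}$ with such a good sequence reaching the same target cluster, whence the Postnikov diagram construction is well-defined. Verifying that the two maps are mutually inverse is then a routine induction on the length of the mutation/exchange sequence, using (P7) at each step and the compatibility of the Maya-sequence labelling of even cells under the inclusions $Q_{m,n} \hookrightarrow Q_{m',n'} \hookrightarrow Q_\infty$ to control the passage between the finite and infinite settings.
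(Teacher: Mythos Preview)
Your proposal is correct and follows essentially the same approach as the paper: both reduce to the finite case via the colimit structure and invoke Scott's result that any two Pl\"ucker clusters of $\ring$ are connected by geometric exchanges. The paper's proof is terser, simply noting that any Pl\"ucker cluster differs from $\X_{Q_\infty}$ in only finitely many variables and then appealing directly to \cite[Prop.~6]{Scott}, whereas you spell out the forward and reverse constructions explicitly; but the substance is the same.
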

   
\begin{proof}
    This is now an immediate consequence of our construction: The Pl\"ucker clusters differ from the initial cluster $\X_{Q_\infty}$ of $\Sigma_{Q_\infty}$ only in finitely many variables. Hence, 
    by \cite[Prop. 6]{Scott}, any Pl\"ucker cluster $\tilde{\X}$ can be reached from $\X_{Q_\infty}$ by a finite mutation sequence, corresponding to a finite sequence of geometric exchanges (see Figure \ref{fig:geometricex}) of $\Gamma_\infty$.
\end{proof}

\begin{figure}
\centering
\includegraphics[width=.9\textwidth]{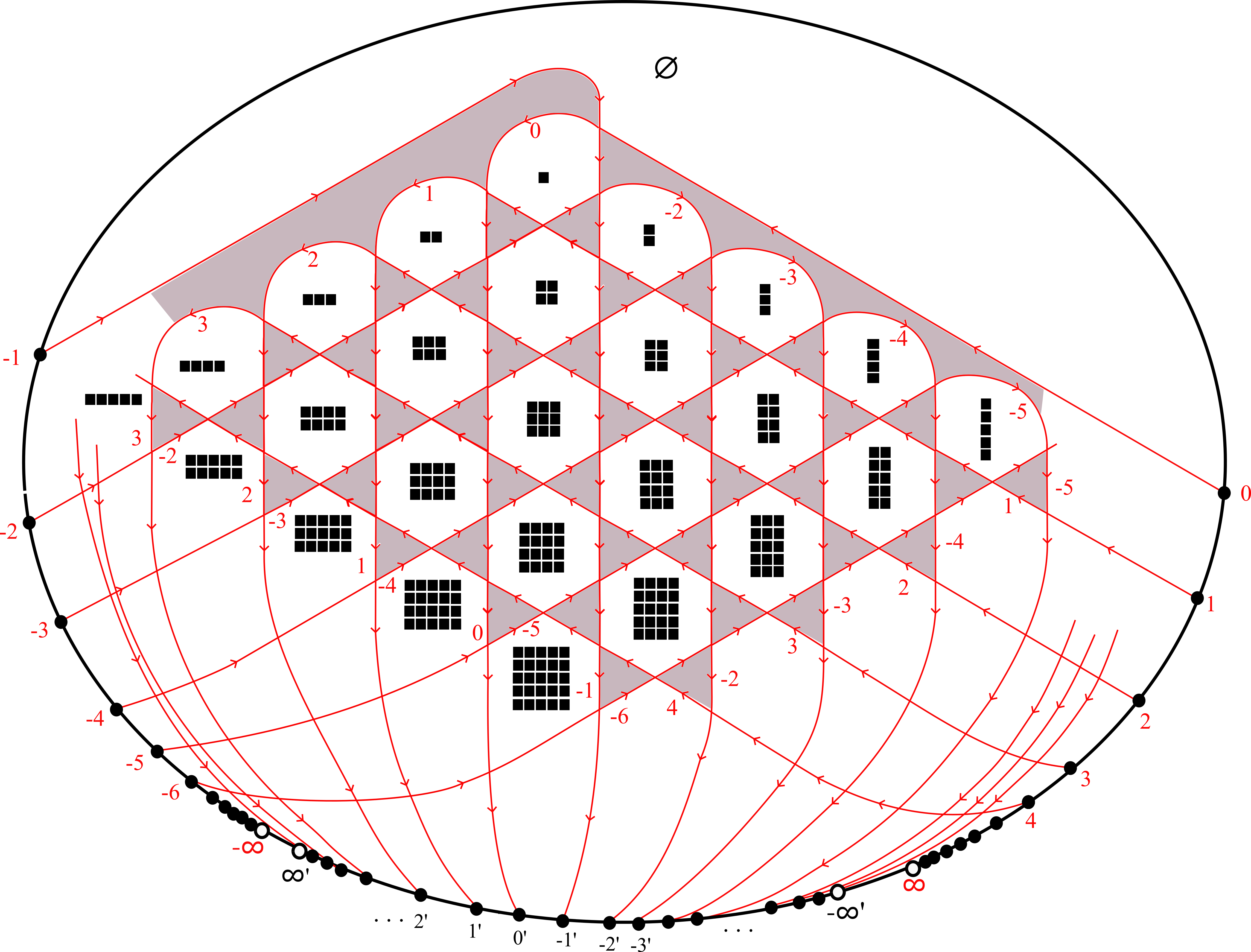} 
\caption{The infinite Postnikov diagram $\Gamma_\infty$ (shown in red) corresponding to the quiver $Q_\infty$ which fixes the initial seed of the ind-cluster algebra $\iring$. The hollow accumulation points at $\pm\infty$ and $\pm\infty'$ are {\em not marked boundary points} of $P_\infty$, i.e. they do not have paths of $\Gamma_\infty$ ending in them.}
\label{fig:postnikov}
\end{figure}

Applying repeatedly the `geometric exchange' relation from Figure \ref{fig:geometricex} to the infinite Postnikov diagram (red) in Figure \ref{fig:postnikov} (or the corresponding quiver mutation) starting with the quadrilateral cells at the boundary in $\Gamma_\infty$ one obtains via finite mutations so-called short (three-term) Pl\"ucker relations.
\begin{example}\rm
 Set $m>2$ in \eqref{Plucker}. Then the resulting Pl\"ucker relation will in general contain $m+1$ terms. For instance, choose $m=3$ then \eqref{Plucker} yields (among others) the relation
    \begin{multline*}
    d_{(\ldots,-4,0,1,2)}d_{(\ldots,-4,-3,-2,-1)}-d_{(\ldots,-4,-1,1,2)}d_{(\ldots,-4,-3,-2,0)}\\
    + d_{(\ldots,-4,-2,1,2)}d_{(\ldots,-4,-3,-1,0)}- d_{(\ldots,-4,-3,1,2)}d_{(\ldots,-4,-2,-1,0)}=0
    \end{multline*}
    or, equivalently, in terms of Young diagrams,
     \[
    d_{(3,3,3)}d_{\varnothing}-d_{(3,3,2)}d_{(1)}+ d_{(3,3,1)}d_{(1,1)}- d_{(3,3)}d_{(1,1,1)}=0\;.
    \]
    In contrast, the geometric exchange relation from Figure \ref{fig:geometricex} always gives three-term relations, even for Pl\"ucker coordinates labelled by partitions $\lambda$ with $\ell(\lambda)>2$. For example, by repeatedly applying the geometric exchange relation on the `diagonal' of the ice quiver $Q_\infty$ (see Figures \ref{fig:quiver} and \ref{fig:postnikov}), whose vertices consist entirely of rectangular Young diagrams, we find for any $k\ge 1$ that
    \begin{equation}
    d_{(k^k)}d_{((k+1)^k,k)}=d_{((k+1)^k)}d_{(k^{k+1})}+d_{(k^{k-1},k-1)}d_{((k+1)^{k+1})},
    \end{equation}
    see Figure \ref{fig:genKP}. The case $k=0$, which corresponds to the KP-equation, is shown in Figure \ref{fig:postnikov2KPsimple}.
\end{example}

\begin{figure}
\centering
\includegraphics[width=.9\textwidth]{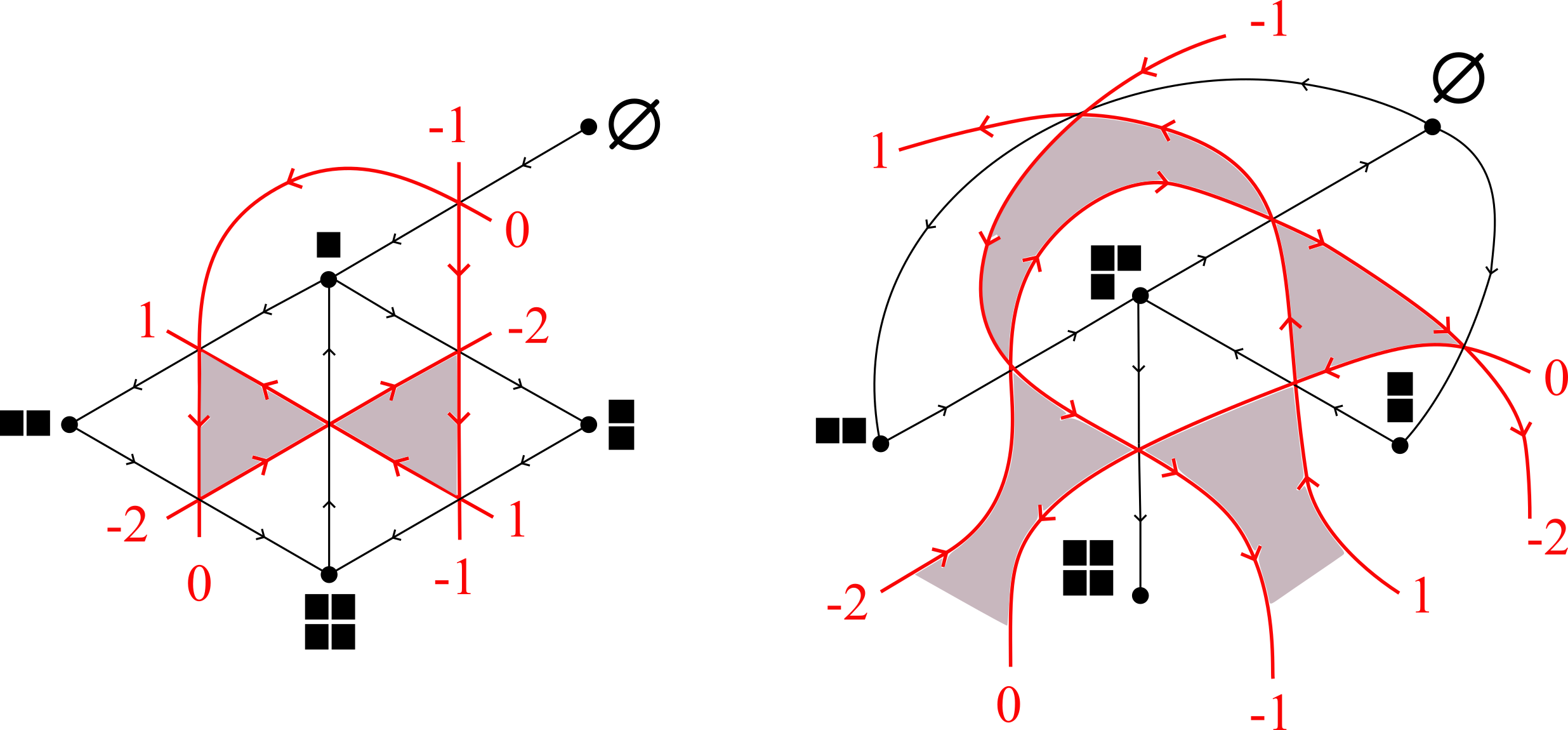} 
\caption{The KP equation as quiver mutation: applying the geometric exchange relation to the first vertex on the diagonal in Figure \ref{fig:postnikov} yields a Pl\"ucker relation which corresponds to the KP equation for the $\tau$-function.}
\label{fig:postnikov2KPsimple}
\end{figure}

\begin{figure}
\centering
\includegraphics[width=.9\textwidth]{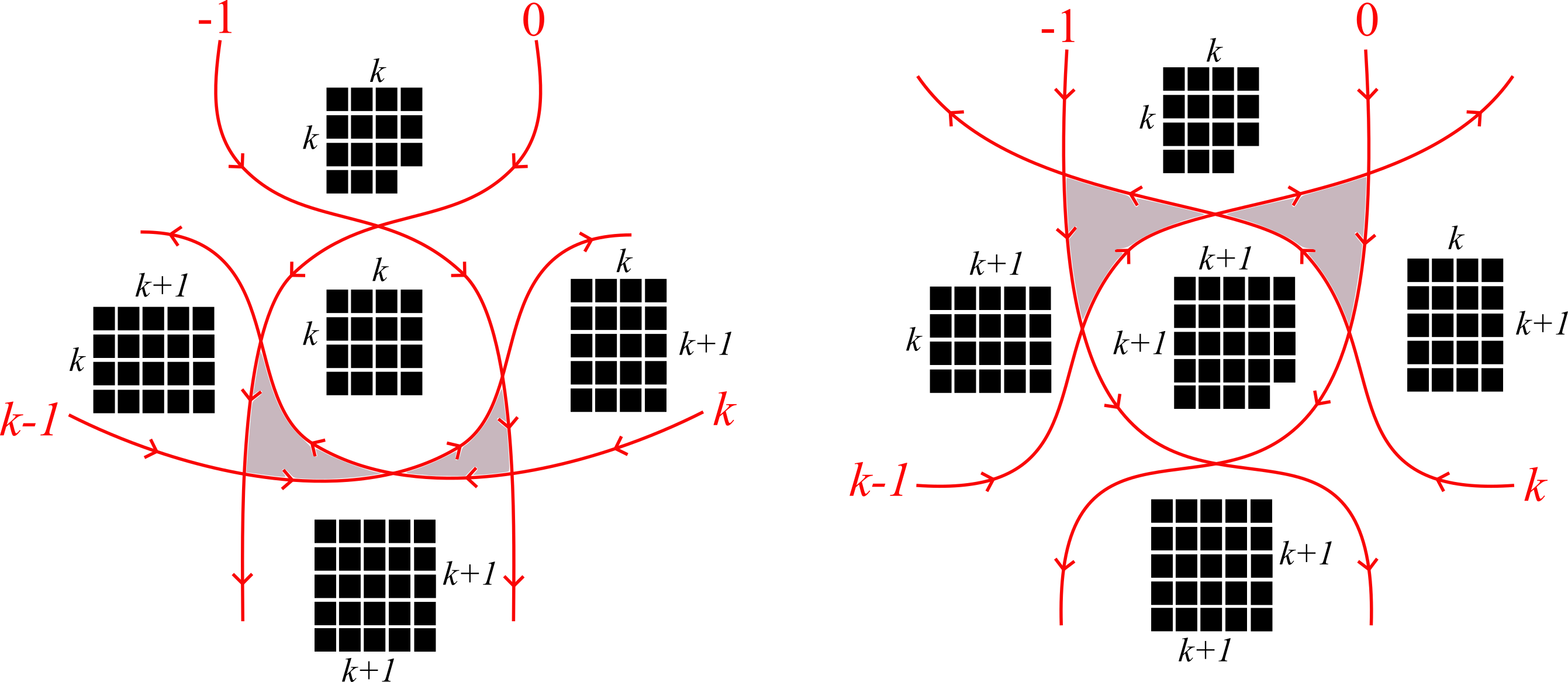} 
\caption{Let $k\ge 1$. Applying repeated the geometric exchange relation on the diagonal of the ice quiver $Q_{\infty}$ (respectively the corresponding infinite Postnikov diagram) one proves the above relation which results in a non-trivial 3-term PDE for the $\tau$-function of the KP-hierarchy.
The case $k=0$ corresponds to the KP-equation shown in Figure \ref{fig:postnikov2KPsimple}.}
\label{fig:genKP}
\end{figure}
    
We now construct an infinite sequence of new diagrams $\Gamma_m$ which contain a large subregion whose size is controlled by $m$ and consists only of quadrilateral cells. 

\begin{example}\label{ex:quad quiver}
\rm
Fix any integer $m>2$. Using repeatedly the geometric exchange relation from Figure \ref{fig:geometricex}, move the $0$-line in the Postnikov diagram from Figure \ref{fig:postnikov} down to the Young diagram consisting of a single column of height $m$ and the $(-1)$-line left to the Young diagram of a single row of width $m$; see Figure \ref{fig:postnikov2quads4} for an example where $m=4$. The resulting quiver $Q_\infty^{(m)}$ contains a finite sub-quiver $Q(m)$ for $\bC[\Gr_{m,m}]$ which only contains vertices of valency four; see Figures \ref{fig:postnikov2quads4} and \ref{fig:quad_quiver}. We now describe $Q(m)\subset Q_\infty^{(m)}$ explicitly: 
\begin{itemize}
\item The vertices of $Q(m)$ are labelled by partitions $\lambda$ which in Frobenius notation are all of the form
\[
\lambda(a,b;k)=(a,a-1\ldots,a-k|b,b-1,\ldots,b-k),\qquad 0\le k\le \min(a,b),\quad 0\le a,b\le m-1\;,
\]
where $a-i$ denotes the arm length (boxes to the right) and $b-i$ the leg length (number of boxes below) for the $i$th square on the diagonal of the Young diagram of $\lambda$. The corresponding Maya diagrams (interpreted as infinite sets) are given by the two relations
\[
a_\bullet\cap\bZ_{\ge 0}=\{a,a-1,\dots,a-k\}\qquad\text{and}\qquad
\bZ_{<0}\setminus a_\bullet=\{-b-1,-b,\ldots,-b-1+k\}\;.
\]
In other words, we place in Figure \ref{fig:BFcorr} black go-stones at positions $\{a,a-1,\dots,a-k\}$ and white go-stones at positions $\{-b-1,-b,\ldots,-b-1+k\}$.

\item If there is a vertex labelled by $\lambda(a,b;k)$ then there is also a vertex labelled by the conjugate partition $\lambda(a,b;k)'=\lambda(b,a;k)$. In other words, the set of vertex labels is invariant under taking the transposes of Young diagrams. 

\item The frozen vertices are labelled by the partitions $\varnothing$, $\lambda(m-1,m-1;m-1)=(m)^m$ as well as 
\[
 \lambda(m-1,m-2;m-2)=(m)^{m-1},\ldots, \lambda(m-1,0;0)=(m)
\]
and, taking conjugates,
\[
\lambda(m-2,m-1;m-2)=(m-1)^{m},\ldots, \lambda(0,m-1;0)=(1^m)\;.
\]
We arrange the frozen vertices such that they form the lower half of a square as shown in Figure \ref{fig:quad_quiver}. We refer to the vertices on the straight line connecting the frozen vertices $\varnothing$ and $\lambda(m-1,m-1;m-1)$ as the `main diagonal' and to the vertices on the lines above as the first, second, etc. diagonal. The vertex labels on the line below the main diagonal are obtained from the ones above via taking the conjugate partition. 

\item The quiver $Q(m)$ is `reflection symmetric' with respect to the main diagonal when sending $\lambda(a,b;k)$ to its conjugate $\lambda(b,a;k)$. In particular, the vertices on the main diagonal are invariant under conjugation, i.e. they are of the form $\lambda(a,a;k)$ with $\lfloor m/2\rfloor\le a\le m-1$ and $0\le k\le m-1$. The vertices in the $r$th diagonal are of the form $\lambda(a,a-r;k)$ with $\lfloor (m+r)/2\rfloor\le a\le m-1$ and $0\le  k\le m-1-r$. Place the vertices labelled by $\lambda(a,b;k)$ in the $k$th row of the square if $a> b$ and in the $k$th column if $a<b$. 

\item There are two types of vertex-arrow configurations which are shown in Figure \ref{fig:quad_vertices}. Which type occurs depends on the diagonal $r$ and the integer $k$ in $\lambda(a,a-r;k)$: if $r+k$ is odd then the configuration on the left in Figure \ref{fig:quad_vertices} occurs, if $r+k$ is even then the one on the right. Because the quiver is reflection symmetric on the diagonal this fixes $Q(m)$ uniquely.

\end{itemize}

Using the sub-quiver $Q(m)$ from Figure \ref{fig:quad_quiver} for any $m\ge 2$ we can obtain an infinite set of three-term or `short' Pl\"ucker relations for hook partitions using the geometric exchange relation: for any $m>2$ and $1\le a,b\le m-1$ we have that (using Frobenius notation for the partitions involved)
    \begin{equation}\label{hookPlucker}
    d_{(a|b)}d_{(a-1|b-1)}=d_\varnothing d_{(a,a-1|b,b-1)}+d_{(a-1|b)}d_{(a|b-1)}\;.
    \end{equation}
If we assume that \eqref{Giambelli} holds then this set of relations is a direct consequence of the Desnanot-Jacobi identity for the determinant $d_{(a,a-1|b,b-1)}$. Furthermore, when setting $a=b=1$ the relation reduces to the one from Figure \ref{fig:postnikov2KPsimple}. As we will discuss below these relations result in partial differential equations for the $\tau$-functions of the KP-hierarchy.
\end{example}

\begin{figure}
\centering
\includegraphics[width=.9\textwidth]{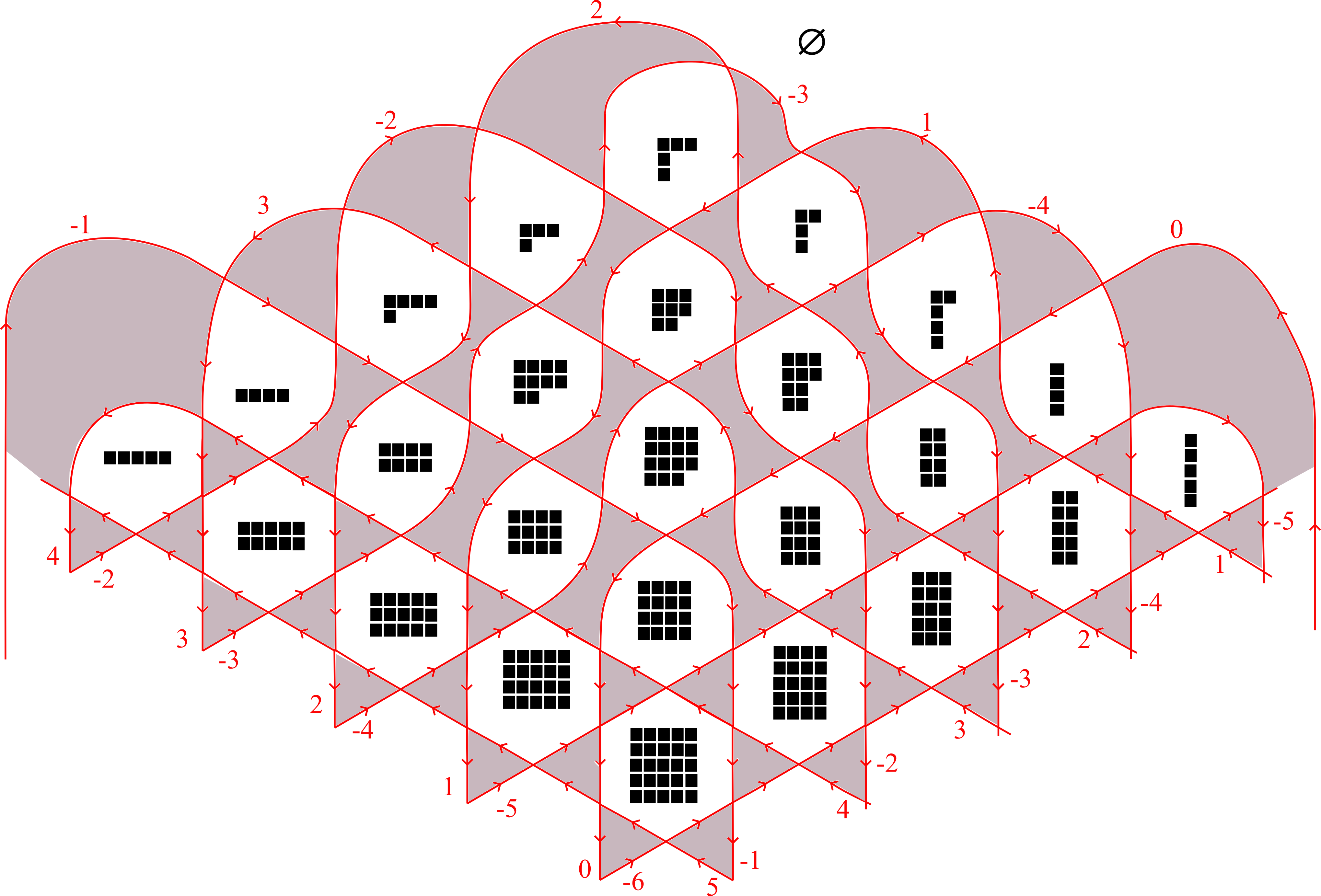} 
\caption{The (infinite) Postnikov diagram \ref{fig:postnikov} after the mutation sequence 
\[
\left((1),(1,1),(2),(2,2),(1,1,1),(3),(2,2,2),(3,3),(3,3,3),(2,1)
\right)\;.
\]
It contains a subdiagram for the Grassmannian $\Gr_{4,4}$ that only consists of quadrilateral cells.
}
\label{fig:postnikov2quads4}
\end{figure}

\begin{figure}
\centering
\includegraphics[width=.7\textwidth]{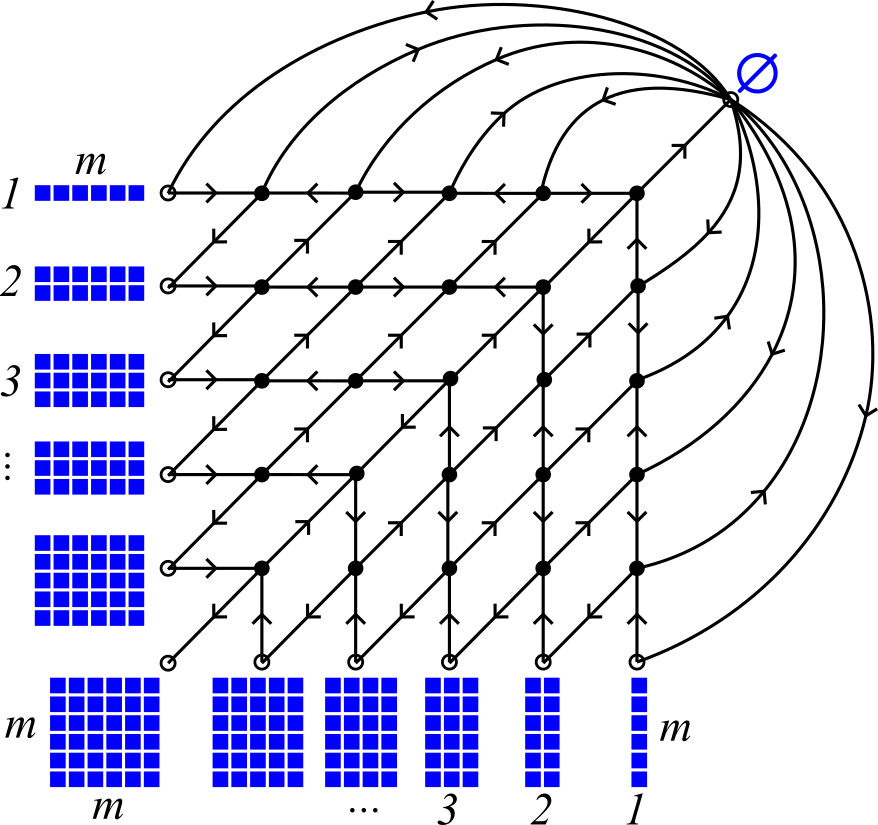}
\caption{A quiver which only consists of four-valent vertices. Setting $m=4$ one obtains via the bijection described under (P6) the Postnikov diagram from Figure \ref{fig:postnikov2quads4}.}
\label{fig:quad_quiver}
\end{figure}

\begin{figure}\label{fig:quad_vertices}
\centering
\includegraphics[width=1\textwidth]{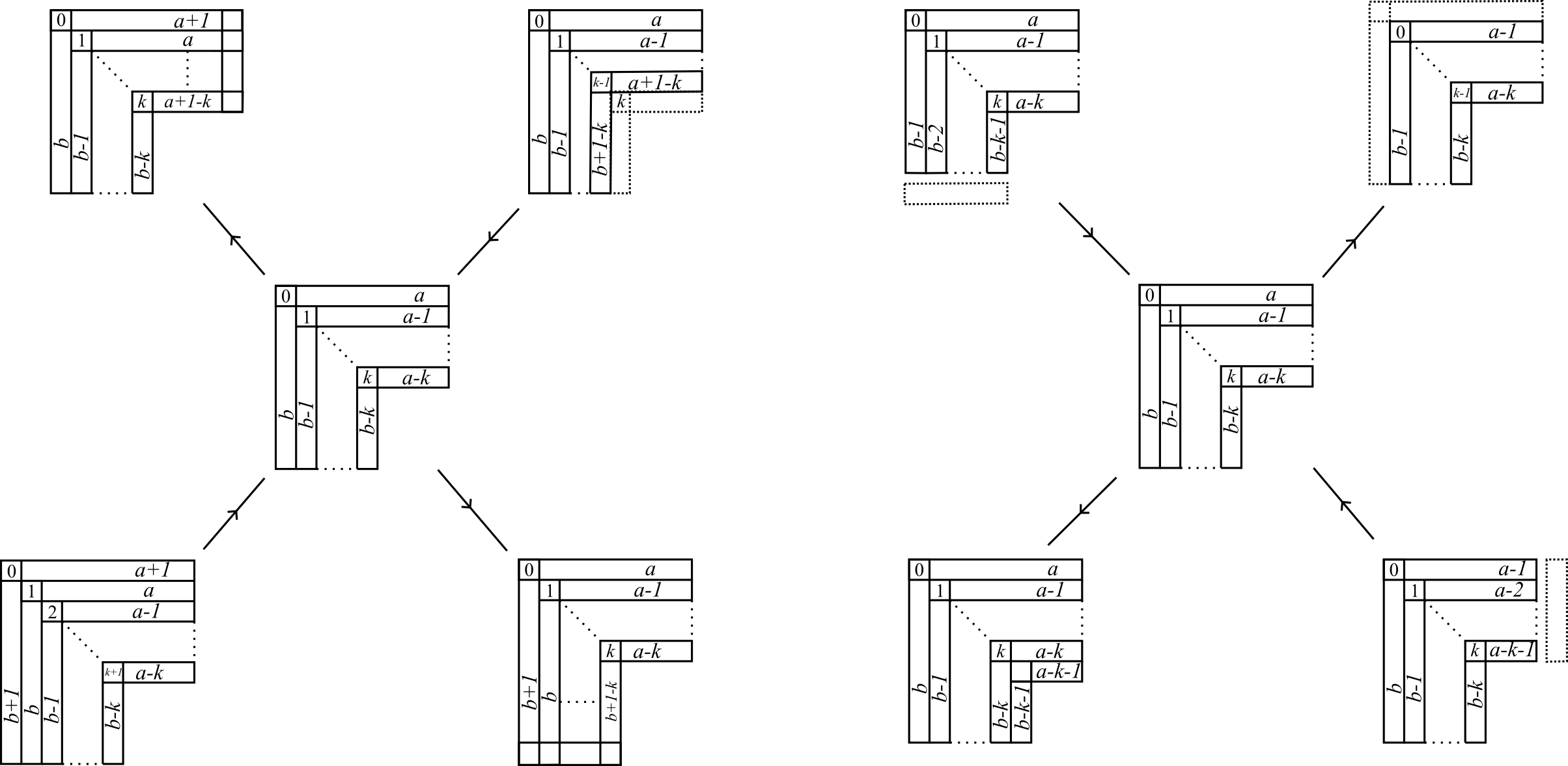}
\caption{The two types of vertex-arrow configurations which appear in the quiver from Figure \ref{fig:quad_quiver}. Vertices in the $r$th diagonal have the configuration on the left if $r+k$ is odd and the one on the right if $r+k$ is even. On the left the neighbouring vertices are obtained by  adding a maximal hook or removing an inner hook, $\lambda(a+1,b+1;k+1)\to\lambda(a,b;k)\leftarrow\lambda(a,b,;k-1)$, or increasing all the arm lengths or all the leg lengths, $\lambda(a+1,b;k)\leftarrow\lambda(a,b;k)\to\lambda(a,b+1;k)$. On the right the neighbouring vertices differ by adding an inner hook or removing an outer hook, $\lambda(a,b;k+1)\leftarrow\lambda(a,b;k)\rightarrow\lambda(a-1,b-1;k-1)$ or decreasing all leg lengths or arm lengths by one, $\lambda(a,b-1;k)\to\lambda(a,b;k)\leftarrow\lambda(a-1,b;k)$.}
\end{figure}

\section{The connection with the KP-hierarchy}
\label{S:KP hierarchy}

In this section, we briefly recall some known facts about the connection of the coordinate ring $\iring$ with $\tau$-functions, the solutions of the KP hierarchy, in order to make the connection between the latter and the ind-cluster algebra $\iring$ explicit. 

\subsection{Schur functions and $\tau$-functions of the KP-hierarchy}
We start by recalling the definition of Schur functions which form an important $\bZ$-basis in the ring of symmetric functions $\Lambda$; see \cite[Chap. I]{Macdonald:symmetric}. The latter is the projective limit of the rings of symmetric polynomials $\Lambda_n=\bZ[x_1,x_2,\ldots,x_n]^{S_n}$ in $n$ variables in the category of graded rings. That is, setting $\Lambda_n=\bigoplus_{k\ge 0}\Lambda^k_n$, where $\Lambda^k_n$ is the degree $k$ subspace, we consider the projective system $\pi_{mn}:f(x_1,\ldots,x_m)\mapsto f(x_1,\ldots,x_n,0,\ldots,0)$ for $m>n$. Then we have that 
\[
\Lambda=\bigoplus_{k\ge 0}\Lambda^k,\qquad\Lambda^k=\lim\limits_{\longleftarrow}\Lambda_n^k
\]
where $\Lambda$ is a free $\bZ$-module with basis $\{m_\lambda~:~\lambda\in\Pi\}$, the monomial symmetric functions, and where $\Pi$ is the set of all partitions of integers. Let $\Bbbk$ be any unital associative and commutative ring, then we shall adopt the notation $\Lambda_\Bbbk=\Lambda\otimes_\bZ\Bbbk$.

In particular, if we set $\Bbbk=\bQ$ and let $p_r=m_{(r)}$, the $p_r=\sum_{i\ge 1}x_i^r$ are called the Newton power sums, then $\Lambda_{\bQ}\cong\bQ[p_1,p_2,\ldots]$ and the $p_\lambda=p_{\lambda_1}p_{\lambda_2}\cdots$ with $\lambda\in\Pi$ form a $\bQ$-basis of $\Lambda_\bQ$, see e.g. \cite[Chap.I]{Macdonald:symmetric}. Using the Frobenius formula for the irreducible characters $\chi_\lambda$ of the symmetric group $S_n$ with $|\lambda|=n$ one has
\begin{equation}\label{Schur}
    s_{\lambda}(p)=\sum_{\mu\vdash|\lambda|}\frac{\chi^{\lambda}(\mu)}{z_\mu}\;p_{\mu_1}p_{\mu_2}\ldots p_{\mu_\ell},\qquad
    z_\mu=\prod_{i>0}i^{m_i}m_i(\mu)!\,,
\end{equation}
    where $\chi^{\lambda}(\mu)$ is the value of the irreducible character on permutations of cycle type $\mu$ and $z_\mu$ is the cardinality of the centraliser of a permutation of cycle type $\mu$ in $S_n$. 
    \begin{remark}\rm
    We alert the reader to the fact that we have broken standard conventions with large parts of the integrable systems literature: commonly the `flow parameters' $t_r=p_r/r$ with $r\ge 1$ are used instead in the physics literature with $t_1=x$, $t_2=y$ being identified as the spatial coordinates and $t_3=t$ as the time parameter. The remaining flow parameters belong to higher integrals of motion; see e.g. \cite{miwa2000solitons} for details.
    \end{remark}
    
    There is a natural non-degenerate bilinear form $\Lambda\otimes\Lambda\to\bZ$, called the Hall inner product, with respect to which the Schur functions form an orthonormal basis, $\langle s_\lambda,s_\mu\rangle=\delta_{\lambda,\mu}$; see e.g \cite[Chapter I]{Macdonald:symmetric}. Denote by $s^\ast_\lambda$ the adjoint of the multiplication operator $s_\lambda$ with respect to this Hall inner product. Explicitly, we have the following realisation of $s^*_\lambda$ as a differential operator in $\Lambda_\bQ$,
    \[
    s^\ast_\lambda=s_\lambda(\partial),\qquad \partial=\left(\frac{\partial}{\partial p_1},2\frac{\partial}{\partial p_2},3\frac{\partial}{\partial p_3},\ldots\right)
    =\left(\frac{\partial}{\partial t_1},\frac{\partial}{\partial t_2},\frac{\partial}{\partial t_3},\ldots\right)
    \;.    
    \]
We are now ready to state the definition of a `formal solution' of the KP-hierarchy. To this end we return to considering the extension of the ring of symmetric functions by an arbitrary unital associative and commutative ring $\Bbbk$, %
$
\Lambda_{\Bbbk}=\Lambda\otimes_\bZ\Bbbk
$ %
and denote by $\Lambda_{\Bbbk}^*$ its (algebraic) dual space. Using the obvious extension of the Hall inner product $\Lambda_\Bbbk\otimes_{\Bbbk}\Lambda_{\Bbbk}\to\Bbbk$ we naturally identify Schur functions with elements in the dual via $s_\lambda\mapsto\langle s_\lambda,\bullet\rangle$. N.B. under this identification the dual $\Lambda_{\Bbbk}^*$ allows for `infinite' linear combinations in Schur functions, while $\Lambda_{\Bbbk}$ only contains finite linear combinations.
\begin{definition}
    By a {\em $\Bbbk$-valued $\tau$-function} we mean an element $\langle \tau, \bullet \rangle$ in $\Lambda_{\Bbbk}^*$ such that the $\langle\tau,s_\lambda\rangle\in\Bbbk$ satisfy the Pl\"ucker relations \eqref{Plucker} for all $m,n\in\bN$ and $\lambda\in\Pi$, i.e. the map $d_\lambda\mapsto\langle\tau,s_\lambda\rangle$ defines a ring homomorphism $\bZ[\Gr]\to\Bbbk$.
\end{definition}
We note that in this formal setting we impose no convergence condition on the elements in the set $\{\langle\tau,s_\lambda\rangle~:~\lambda\in\Pi\}\subset\Bbbk$. Moreover, each ring homomorphism $f:\bZ[\Gr]\to\Bbbk$ defines an associated formal $\tau$-function by setting $\langle\tau^f,s_\lambda\rangle=f(d_\lambda)$ for all $\lambda\in\Pi$ and as the Schur functions $s_\lambda$ form a basis of $\Lambda_{\Bbbk}$ this fixes $\tau^f\in\Lambda^*_{\Bbbk}$ uniquely.

\begin{example}\rm
   The simplest examples of $\tau$-functions are the Schur functions: namely set $\tau=s_\lambda$ then one has $\langle\tau,s_\mu\rangle=\delta_{\lambda\mu}$, which trivially solves the Pl\"ucker relations \eqref{Plucker} as the latter are only containing quadratic terms $d_\alpha d_\beta$ with $\alpha\neq\beta$.  N.B. this solution of the Pl\"ucker relations corresponds to the ideal in $\bZ[\Gr]$ generated by $d_\lambda-1$ and $d_\mu$ for $\mu\neq\lambda$.
\end{example}

Employing the Hall inner product the Pl\"ucker relations \eqref{Plucker} can be written over the base $\Bbbk=\bQ$ as an infinite set of nonlinear partial differential equations -- which make up the KP-hierarchy -- using that,
\begin{equation}\label{s2d}
\langle\tau,s_\lambda\rangle=\langle s_\lambda^*\tau,1\rangle=s^*_\lambda(\tau)(0)=
s_{\lambda}(\partial)\tau|_{p_1=p_2=\cdots=0}\;.
\end{equation}

We now demonstrate on the simplest non-trivial example how to recast the Pl\"ucker relations \eqref{Plucker} as differential equations for the $\tau$-function and how these relations arise from the mutation at a vertex in the quiver $Q_\infty$ of the ind-cluster algebra $\bZ[\Gr]$.
\begin{example}\rm
    Fix $m=2$. Then \eqref{Plucker} reduces to the following three term relation,
    \[
    -d_{(\ldots,-3,0,1)}d_{(\ldots,-3,-2,-1)}+d_{(\ldots,-3,-1,1)}d_{(\ldots,-3,-2,0)}-d_{(\ldots,-3,-2,1)}d_{(\ldots,-3,-1,0)}=0\;.
    \]
    This is the only non-trivial Pl\"ucker relation coming from $\bZ[\Gr_{2,2}]$. Recall that each of the Maya sequences (of charge 0) appearing in the above equation encodes the outline of a Young diagram; see Remark \ref{rmk:Maya2Young}. Labelling the coefficients in terms of partitions instead we obtain 
    \begin{equation}\label{KPd}
        -d_{(2,2)}d_{\varnothing}+d_{(2,1)}d_{(1)}-d_{(1,1)}d_{(2)}=0\;.
    \end{equation}
    We recognise in this labelling that this relation is obtained by mutating at the vertex labelled by a single box in Figure \ref{fig:postnikov}. The resulting Postnikov diagram and quiver is shown in Figure \ref{fig:postnikov2KPsimple}. In terms of Schur functions this Pl\"ucker relation translates into 
\begin{equation}\label{KPs}
\langle\tau,1\rangle\langle\tau, s_{(2,2)}\rangle-\langle\tau,s_{(2,1)}\rangle\langle\tau, s_{(1)}\rangle+\langle\tau, s_{(1,1)}\rangle\langle\tau, s_{(2)}\rangle=0,
\end{equation}
which is the KP-equation. The latter can be expressed in its Hirota bilinear form by expanding each Schur function into power sums using \eqref{s2d}; see e.g. \cite[Example 10.1]{miwa2000solitons}. More generally, using the relation \eqref{hookPlucker} from Example \ref{ex:quad quiver}, we obtain (using Frobenius notation for the partitions) the following set of 3-term relations for solutions $\tau$ of the KP-hierarchy,
    \begin{equation}
    \langle\tau,1\rangle\langle\tau, s_{(a,a-1|b,b-1)}\rangle-\langle\tau,s_{(a|b)}\rangle\langle\tau, s_{(a-1|b-1)}\rangle+\langle\tau, s_{(a-1|b)}\rangle\langle\tau, s_{(a|b-1)}\rangle=0,
    \end{equation}
    which for $a=b=1$ reduces to the KP-equation above.
\end{example}

\subsection{The Sato-Segal-Wilson Grassmannian and $\tau$-functions}\label{S:tau functions}

We shall now specialise to the case $\Bbbk=\bC$ and consider the ring $\iring$. Recall Definition \ref{def:Gr} of the Grassmannian $\Gr(H)$ and of its connected component $\Gr$ of virtual dimension zero. For technical details we refer the reader to \cite[Section 8]{segal1985loop} and \cite[Chapter 7]{pressley1985loop}.

We recall that any subspace $W\in\Gr$ contains a sequence $\{w_i\}_{i\ge 1}$ called an {\em admissible basis} in \cite[\S3]{segal1985loop} and \cite[Def. 7.5.1]{pressley1985loop} such that 
\begin{itemize}
    \item the associated map $w:H_\varnothing\to H$ which maps $z^{-i+1}\mapsto w_i$ for all $i\ge 1$ has image $W$ and is continuous as well as injective;
    \item the matrix transforming $\{\mathrm{pr}(w_i)\}$ into $\{z^{i+1}\}$ differs from the identity by an operator who has a determinant (i.e. it differs from the identity by an operator of trace class).
\end{itemize}
In particular, one can choose for each $W\in\Gr$ a particular admissible basis making use of the following known stratification of $\Gr(H)$ in terms of Maya diagrams \cite[\S2,\S3]{segal1985loop} and \cite[\S7.3]{pressley1985loop}: for each $W\in\Gr$ there exists a unique (minimal) Maya diagram $a_\bullet$ of virtual cardinality zero (or equivalently a partition $\lambda$) such that the orthogonal projection $W\to H_{a_\bullet}$ is an isomorphism. Here $H_{a_\bullet}$ is the subspace spanned by $\{z^{a_i+1}\}_{i\ge 1}$. That is, $W$ contains elements $w_i$ of the form
\[
w_i=z^{a_i+1}+\sum_{n>i}w_{n,i}z^{n+1},\qquad w_{n,i}\in\bC,
\]
which span the dense subspace $W^{\rm fin}\subset W$ of finite order elements in $W$. For a given fixed Maya diagram $a_\bullet$ (or, equivalently, a partition $\lambda$) the set of all subspaces $W\in\Gr$ obeying the latter property form a submanifold, the stratum $\Gr_\lambda$. In particular, $W=H_\varnothing\in\Gr_\varnothing$ which explains our notation.
\begin{example}\rm
The last example generalises to all partitions $\lambda$: let $a_\bullet=a_\bullet(\lambda)$ be the unique Maya sequence of virtual cardinality zero corresponding to $\lambda$; c.f. Remark \ref{rmk:Maya2Young}. Then the subspace $H_\lambda=H_{a_\bullet}$ with admissible basis $\{z^{a_i+1}\}_{i\ge 1}$ lies in $\Gr_\lambda$. The subspaces $H_\lambda$ correspond to the simplest solutions of the KP-hierarchy $\tau=s_\lambda$ discussed above.
\end{example}

Using the concept of admissible bases, one can assign to each $W\in\Gr$ and partition $\lambda$ (or equivalently Maya diagram of charge 0) the following `infinite determinant' called `the Pl\"ucker coordinates of $W$' in \cite[\S8]{segal1985loop},
\begin{equation}\label{W2d}
\Delta_\lambda(W):=\det(w_{\lambda_i-i,j})_{i,j\ge 1}\in\bC\;.
\end{equation}
In fact, using the admissible basis above for $W\in\Gr_\lambda$, one can show that $\Delta_\lambda(W)$ reduces always to a finite determinant and is therefore well-defined; see \cite[\S8]{segal1985loop}. Moreover, analogous to the finite dimensional case the Pl\"ucker coordinates \eqref{W2d} define a projective embedding; c.f. \cite[\S10]{segal1985loop} and \cite[\S7.5]{pressley1985loop}.

The following summarises the key result of the connection between the infinite dimensional Grassmannian $\Gr$ and the ring $\iring$; we refer the reader to \cite{sato1983soliton} and \cite{segal1985loop} for details.

\begin{theorem}[Sato, Segal-Wilson, Pressley-Segal] For each $W\in\Gr$ the collection $\{\Delta_\lambda(W)\}_{\lambda\in\Pi}\subset\bC$ satisfies the Pl\"ucker relations \eqref{Plucker} and is square-summable, i.e. $\sum_{\lambda\in\Pi}|\Delta_\lambda(W)|^2<\infty$.
\end{theorem}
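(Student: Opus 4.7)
The plan is to establish the two assertions separately, both via reduction to classical facts about finite-rank matrices plus a Fredholm/trace-class input supplied by the definition of $\Gr$.

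For the Pl\"ucker relations, I would first fix $W \in \Gr_\mu \subset \Gr$ and choose an admissible basis $\{w_i\}_{i\ge 1}$ of the form $w_i = z^{a_i+1} + \sum_{n > a_i} w_{n,i} z^{n+1}$, where $a_\bullet = a_\bullet(\mu)$. Assembling the Fourier coefficients into a matrix $\bW = (w_{n,i})_{n \in \bZ, i \ge 1}$ (with the convention $w_{a_i,i} = 1$), the hypothesis that $\bW$ differs from the corresponding shift embedding $H_\varnothing \hookrightarrow H$ by an operator of trace class guarantees that each minor in \eqref{W2d} collapses to a finite determinant: outside a finite block the matrix $(w_{\lambda_i-i,j})$ is upper-unitriangular. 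I would then truncate to a large enough rectangle $[-N,N-1]$ containing all indices relevant to a fixed Pl\"ucker relation \eqref{Plucker}. On this truncation the $\Delta_\lambda(W)$ arise as ordinary Pl\"ucker coordinates of a point in the finite Grassmannian $\Gr_{N,N}$, which satisfy the classical Pl\"ucker relations, and these are exactly the relations \eqref{Plucker}. Since the truncation recovers the same values $\Delta_\lambda(W)$ for all partitions whose Maya sequences fit inside $[-N,N-1]$, the relation holds for $W$.

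For square-summability, the natural tool is the Cauchy--Binet identity applied at the infinite-dimensional level. Let $T \colon H_\varnothing \to H$ be the continuous linear map sending $z^{-i+1} \mapsto w_i$, so that $W = \mathrm{im}(T)$, and decompose $T = T_\varnothing \oplus T_0$ along $H = H_\varnothing \oplus H_0$. The hypothesis (ii) in Definition~\ref{def:Gr} says exactly that $T_0$ is Hilbert--Schmidt, and after passing to the unique admissible basis adapted to the stratum $\Gr_\mu$ the operator $T_\varnothing$ is a finite-rank perturbation of the identity. The wedge $\wedge T \colon \wedge^{\mathrm{top}} H_\varnothing \to \wedge^{\mathrm{top}} H$ is then well-defined in the determinant line bundle, and expansion of $\|\wedge T\|^2$ in the orthonormal Schur basis $\{z^{a_\bullet(\lambda)}\}$ identifies
\[
\sum_{\lambda \in \Pi}|\Delta_\lambda(W)|^2 \;=\; \det\nolimits_{H_\varnothing}\!\bigl(T^*T\bigr),
\]
a Fredholm determinant. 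Since $T^*T - \mathrm{id}_{H_\varnothing} = T_\varnothing^* T_\varnothing - \mathrm{id} + T_0^* T_0$ is trace class (finite-rank plus product of two Hilbert--Schmidts), this determinant converges, proving square-summability.

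The main obstacle I anticipate is bookkeeping around the chosen admissible basis: the formula \eqref{W2d} depends on the basis, and I would need to check that a change of admissible basis multiplies all $\Delta_\lambda(W)$ by a common nonzero scalar (the determinant of the change-of-basis matrix, which is trace-class perturbation of the identity, so its Fredholm determinant exists and is nonzero on the stratum). Once this is in place, both the Pl\"ucker relations (being homogeneous quadratic) and the square-summability condition are basis-independent, and the argument goes through cleanly. A convenient way to sidestep this is to work directly with the determinant line bundle on $\Gr$ and the canonical section $\wedge T$, which bypasses any choice of basis and makes the Cauchy--Binet step above completely intrinsic.
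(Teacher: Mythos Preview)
The paper does not give a proof of this theorem at all: it is stated as a result of Sato, Segal--Wilson, and Pressley--Segal, with the reader referred to \cite{sato1983soliton} and \cite{segal1985loop} for details. So there is no ``paper's own proof'' to compare against; the statement functions as a quoted black box from the literature.

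That said, your sketch follows the standard line of argument found in those sources. The reduction of \eqref{Plucker} to a finite Pl\"ucker relation by truncation is exactly right, since any fixed relation involves only partitions whose Maya sequences fit in a common bounding box, and on that box the $\Delta_\lambda(W)$ are genuine minors of a finite matrix. The square-summability argument via Cauchy--Binet and the Fredholm determinant $\det(T^*T)$ is also the standard one. One small imprecision: for the stratum-adapted admissible basis, $T_\varnothing - \mathrm{id}$ is in general only trace class, not finite rank (the ``higher order terms'' in $w_i$ can have nonzero $H_\varnothing$-components for all $i$). This does not affect your conclusion, since trace class plus a product of two Hilbert--Schmidt operators is still trace class, so the Fredholm determinant converges regardless. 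Your remark about basis-independence and the determinant line bundle is also the right way to handle the residual ambiguity.
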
 
The last theorem defines a holomorphic embedding $\Gr\to\Proj(\ell^2(\Pi))$, where $\ell^2(\Pi)$ denotes the Hilbert space of square summable sequences labelled by partitions $\lambda\in\Pi$; see \cite[Chapter 7, Prop. 7.5.2]{segal1985loop}. This map has been called the Pl\"ucker embedding in {\em loc. cit.} and should be thought of as a generalisation of the Pl\"ucker embedding from the finite to the infinite case.

\begin{remark}\rm
Recall the presentations from \cite[Prop.2.8 (ii) and Prop 2.10 (ii)]{FH-Sato} of the coordinate ring $\iring$. In {\em loc. cit.} the maximal ideals of $\iring$ are identified as points on Sato's `universal Grassmann manifold' $\widetilde{UGM}$; see \cite[Remark 2.11]{FH-Sato}. In this case no convergence conditions are imposed on the Pl\"ucker coordinates.   
\end{remark}

\begin{example}\rm
   The simplest subspaces $H_\lambda\in\Gr$ which contain the admissible basis $\{z^{\lambda_i-i}\}_{i\ge 1}$ have Pl\"ucker coordinates $\Delta_\mu(H_\lambda)=\delta_{\lambda\mu}$. They trivially solve the Pl\"ucker relations \eqref{Plucker}. More generally, we have for $W\in\Gr_\mu$ that $\Delta_\mu(W)\neq 0$ and $\Delta_\lambda(W)=0$ when $\lambda\subset\mu$; see e.g. \cite[Prop.7.5.4]{pressley1985loop}. Thus, in general there will be infinitely many nonzero Pl\"ucker coordinates for a generic point $W\in\Gr$. 
\end{example}

Instead of working with the (algebraic) dual space $\Lambda^*_\bC$, as in the previous section, we now consider the Hilbert space completion $\overline{\Lambda}_\bC$ of $\Lambda_\bC=\Lambda\otimes_\bZ\bC$ with respect to the Hall inner product $\Lambda_\bC\otimes_\bC\Lambda_\bC\to\bC$ which we assume to be anti-linear in the first factor. The Hilbert space completion allows for infinite linear combinations of Schur functions but it also imposes a convergence condition on the Pl\"ucker coordinates $\langle\tau,s_\lambda\rangle$ for any $\tau\in\overline{\Lambda}_\bC$: namely, they must be square summable and this condition then matches with the convergence condition imposed on the Pl\"ucker coordinates $\Delta_\lambda(W)$ of a point $W$ on the Sato-Segal-Wilson Grassmannian. The following definition of the Sato-Segal-Wilson $\tau$-function, a solution of the KP-hierarchy, is Proposition 8.3 in \cite{segal1985loop} (see also \cite{miwa2000solitons} and references therein)\footnote{We note that in \cite[Section 3]{segal1985loop} the $\tau$-function is defined in terms of sections of the dual determinant bundle instead and using explicit determinant formulae. Here we have used \cite[Prop. 8.3]{segal1985loop} for the definition instead, the expansion of a $\tau$-function into Schur functions, in order to make the connection with the Pl\"ucker coordinates explicit.}.
\begin{definition}
    Let $W\in\Gr$. Then the associated $\tau$-function $\tau^W\in\overline{\Lambda}_\bC$ is the series
    \begin{equation}\label{tau}
      \tau^W(p)=\sum_{\lambda\in\Pi}\Delta_\lambda(W)s_\lambda(p),
    \end{equation}
    where the Pl\"ucker coordinates $\Delta_\lambda(W)\in\bC$ of $W$ are the ones defined in \eqref{W2d} and the $s_\lambda(p)$ are the Schur functions \eqref{Schur}.
\end{definition}
This definition allows us to recall Sato's original observation \cite{sato1981soliton,sato1983soliton} of the relation between solutions of the KP hierarchy and infinite dimensional Grassmannians in the setting of the Sato-Segal-Wilson Grassmannian.
\begin{theorem}[Sato \cite{sato1981soliton,sato1983soliton}]
    Any element $\tau\in\overline{\Lambda}_\bC$ which satisfies the KP-hierarchy, i.e. the coefficients $\langle \tau, s_\lambda\rangle$ satisfy the Pl\"ucker relations \eqref{Plucker}, corresponds to a point of the Sato-Segal-Wilson Grassmannian $\Gr$. That is, there exists $W\in\Gr$ such that $\langle s^*_\lambda(\tau),1\rangle=\Delta_\lambda(W)$.
\end{theorem}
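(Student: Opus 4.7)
The plan is to convert the Hilbert-space expansion of $\tau$ into a family of Pl\"ucker-type coordinates, check that this family lies in the vanishing locus of all Pl\"ucker relations inside $\Proj(\ell^2(\Pi))$, and then appeal to the known characterisation of the image of the Pl\"ucker embedding $\Gr \hookrightarrow \Proj(\ell^2(\Pi))$ recalled above.

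First I would use that $\{s_\lambda\}_{\lambda \in \Pi}$ is a Hilbert basis of $\overline{\Lambda}_{\bC}$ with respect to the Hall inner product to expand $\tau$ uniquely as $\tau = \sum_{\lambda \in \Pi} c_\lambda s_\lambda$ with $c_\lambda = \langle s^\ast_\lambda(\tau),1\rangle = \langle \tau, s_\lambda\rangle$ and $\sum_\lambda |c_\lambda|^2 = \|\tau\|^2 < \infty$. The hypothesis that $\tau$ satisfies the KP-hierarchy is precisely the statement that the sequence $(c_\lambda)_{\lambda \in \Pi}$ satisfies every Pl\"ucker relation \eqref{Plucker}. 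Hence $[c_\lambda]$ defines a point of $\Proj(\ell^2(\Pi))$ lying in the common vanishing locus of all Pl\"ucker relations.

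Next I would invoke the theorem of Sato \cite{sato1981soliton,sato1983soliton}, as refined by Segal--Wilson \cite[\S8,\S10]{segal1985loop} and Pressley--Segal \cite[Prop.\,7.5.2]{pressley1985loop}, recalled above: the Pl\"ucker embedding $W \mapsto [\Delta_\lambda(W)]$ identifies $\Gr$ with the locus of square-summable families in $\Proj(\ell^2(\Pi))$ satisfying all Pl\"ucker relations. Applying this to the point $[c_\lambda]$ produces a unique $W \in \Gr$ with $[\Delta_\lambda(W)] = [c_\lambda]$. To pin down the scalar, pick the unique minimal partition $\mu$ with $c_\mu \neq 0$; by the Schubert stratification of $\Gr$ one has $W \in \Gr_\mu$, so $\Delta_\mu(W) \neq 0$, and rescaling the admissible basis of $W$ by $c_\mu / \Delta_\mu(W) \in \bC^\times$ (equivalently, rescaling $\tau$) yields $\Delta_\lambda(W) = c_\lambda = \langle s^\ast_\lambda(\tau),1\rangle$ for every $\lambda \in \Pi$, as required.

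The main obstacle is the reconstruction half of the Pl\"ucker-image characterisation: given a square-summable family $(c_\lambda)$ satisfying the Pl\"ucker relations, one must produce a closed subspace $W \subset H$ together with an admissible basis whose minors recover the $c_\lambda$, and whose projections $W \to H_\varnothing$ and $W \to H_0$ satisfy the finite-rank and Hilbert--Schmidt conditions (i),(ii) of Definition~\ref{def:Gr}. Concretely, one would build an admissible basis for $W$ from the ratios $c_{\mu\cup(i,j)}/c_\mu$ indexed by single-box modifications of the Young diagram of $\mu$; the square-summability of $(c_\lambda)$ translates directly into the Hilbert--Schmidt condition on the projection to $H_0$, while the remaining Pl\"ucker relations ensure that the higher minors of this basis equal $c_\lambda/c_\mu$ for all $\lambda$. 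This is the analytic content of \cite{segal1985loop,pressley1985loop}, and modulo it the statement follows.
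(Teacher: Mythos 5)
The paper offers no proof of this theorem: it is recorded as a classical result of Sato and justified purely by citation to \cite{sato1981soliton,sato1983soliton} and \cite{segal1985loop,pressley1985loop}. Your proposal is consistent with that treatment---after the correct reformulation via the Hilbert basis $\{s_\lambda\}_{\lambda\in\Pi}$ of $\overline{\Lambda}_\bC$, the decisive step you invoke (that the Pl\"ucker embedding identifies $\Gr$ with the square-summable locus of the Pl\"ucker relations in $\Proj(\ell^2(\Pi))$) \emph{is} the theorem being proved, and you correctly flag the reconstruction of $W$ from the family $(c_\lambda)$ as the outstanding analytic content, deferring it to \cite{segal1985loop,pressley1985loop}; so your argument is a reduction to the cited literature rather than a self-contained proof, which matches exactly what the paper does. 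One minor point: the appeal to a ``unique minimal partition $\mu$ with $c_\mu\neq 0$'' is unjustified as stated (containment of Young diagrams is only a partial order, so a minimal element of $\{\lambda : c_\lambda \neq 0\}$ need not be unique) but also unnecessary---once $[\Delta_\lambda(W)]=[c_\lambda]$ holds in $\Proj(\ell^2(\Pi))$, any single $\mu$ with $c_\mu\neq 0$ suffices to normalise the admissible basis and hence the overall scalar.
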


As another application of the ind-cluster algebra structure we are now considering positivity questions for the Sato-Segal-Wilson Grassmannian. Some of our motivation comes from combinatorics as well as the close connection between $\Gr$ and the representation theory of loop groups; see \cite{pressley1985loop}. 

The following closely resembles the definition of the {\em totally positive Grassmannian} in the finite case; see e.g. \cite[Def. 1.2.1]{fomin2024introduction} .

\begin{definition}
    Define the {\em totally positive Sato-Segal-Wilson Grassmannian} $\Gr^+$ to be the set of all points $W\in\Gr$ such that $\Delta_\lambda(W)>0$ for all $\lambda\in\Pi$.
\end{definition}

Recall that in algebraic combinatorics an important concept is that of Schur positivity: an element $f\in\Lambda$ in the ring of symmetric functions is called {\em Schur positive} if it has an expansion into Schur functions where each non-zero coefficient is positive, i.e. $\langle f,s_\lambda\rangle\ge 0$. We extend this notion to the Hilbert space completion $\overline{\Lambda}_{\bC}$ in the obvious manner.

\begin{corollary}
Let $\tau=\tau^W$ be a solution of the KP-hierarchy with $W\in\Gr_\varnothing$. Then $\tau^W$ is {\em totally Schur positive}, i.e. the Pl\"ucker coordinates $\Delta_\lambda(W)=\langle s_\lambda^\ast(\tau^W),1\rangle$ in the expansion \eqref{tau} are all positive, if and only if  $\Delta_\mu(W)>0$ for any rectangular partition $\mu$. In particular, in this case we have that $W\in\Gr^+$.
\end{corollary}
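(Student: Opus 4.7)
My plan is to reduce the corollary directly to Corollary \ref{cor:Laurent}, the positive Laurent phenomenon for the ind-cluster algebra $\bZ[\Gr]$. The ``only if'' direction is immediate, since rectangular partitions form a subclass of all partitions. The main content is the ``if'' direction.

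First I would observe that a point $W \in \Gr$ determines a ring homomorphism $\mathrm{ev}_W\colon \bC[\Gr] \to \bC$ sending $d_\lambda \mapsto \Delta_\lambda(W)$. Its well-definedness is a consequence of the Pl\"ucker coordinates of $W$ satisfying the full set of Pl\"ucker relations \eqref{Plucker}, together with the fact that these generate all relations among the Pl\"ucker variables by \cite[Theorem~2.8]{FH-Sato} (this is exactly the content of \cite[Proposition~2.10]{FH-Sato}, which identifies the closed points of $\Proj\,\bC[\Gr]$ with points of $\Gr$). By Corollary \ref{cor:Laurent}(i), every Pl\"ucker variable $d_\lambda$ coincides in $\bZ[\Gr]$ with a Laurent polynomial $F_\lambda$ in the rectangular Pl\"ucker variables $\{d_{i\times j}\}$ and $d_\varnothing$ with non-negative integer coefficients; by the Laurent phenomenon the denominator of $F_\lambda$ is a monomial purely in the exchangeable rectangular variables (the frozen $d_\varnothing$ does not appear in it). Applying $\mathrm{ev}_W$ to the identity $d_\lambda = F_\lambda$ yields
\[
\Delta_\lambda(W) \;=\; F_\lambda\bigl(\{\Delta_{i\times j}(W)\}_{i,j \geq 1},\, \Delta_\varnothing(W)\bigr)\;.
\]

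Now suppose $\Delta_\mu(W) > 0$ for every rectangular $\mu$ (including $\mu = \varnothing$). Then every input to $F_\lambda$ is strictly positive, so the denominator monomial of $F_\lambda$ evaluates to a strictly positive number, and the numerator, being a non-zero polynomial with non-negative integer coefficients evaluated at strictly positive inputs, also evaluates to a strictly positive number (at least one monomial has a positive coefficient and contributes positively, using that $d_\lambda$ is a non-zero cluster variable). Consequently $\Delta_\lambda(W) > 0$ for every $\lambda \in \Pi$, which is the total Schur positivity of $\tau^W = \sum_\lambda \Delta_\lambda(W)\,s_\lambda$ and thus $W \in \Gr^+$.

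The only delicate point is the well-definedness of $\mathrm{ev}_W$ and the validity of the Laurent expression after evaluation: the assumption $W \in \Gr_\varnothing$ precisely guarantees $\Delta_\varnothing(W) \neq 0$, so our choice of the ``rectangle'' initial seed is not degenerate at $W$, and combined with the positivity hypothesis on $\Delta_{i\times j}(W)$ it ensures that the exchangeable initial cluster variables are invertible under $\mathrm{ev}_W$. Everything else is a formal consequence of positivity of cluster algebras combined with $\bC[\Gr]$ being an ind-cluster algebra (Theorem \ref{T:coordinate ring is colimit}).
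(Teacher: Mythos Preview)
Your argument is correct and is precisely the approach the paper intends: the corollary is stated without proof because it is an immediate application of Corollary~\ref{cor:Laurent} (the positive Laurent phenomenon for the initial rectangle cluster of $\bZ[\Gr]$), and you have spelled out exactly that reduction. One small point worth tightening: the role of the hypothesis $W\in\Gr_\varnothing$ is only to guarantee $\Delta_\varnothing(W)\neq 0$, and since the frozen variable $d_\varnothing$ can appear in the \emph{numerator} of $F_\lambda$, you need $\Delta_\varnothing(W)>0$ rather than just $\neq 0$ for the positivity argument to go through; your parenthetical ``including $\mu=\varnothing$'' resolves this, but it would be cleaner to note that for $W\in\Gr_\varnothing$ one may normalise the admissible basis so that $\Delta_\varnothing(W)=1$, after which the hypothesis on genuinely rectangular $\mu$ suffices.
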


\begin{remark}\rm
    The previously known explicit descriptions of the Pl\"ucker coordinates $\Delta_\lambda(W)$ in the literature are in determinant form such as \eqref{Giambelli}; see e.g. \cite[Cor.2.1]{harnad2011schur} and also \cite[Thm 1.1]{nakayashiki2017expansion} for Pl\"ucker coordinates belonging to points $W\in\Gr_\lambda$ in strata with $\lambda\neq\varnothing$. These determinant formulae are derived using the Pl\"ucker embedding and Wick's Theorem for free fermions and are therefore not manifestly positive. They also do not allow one to address the question of algebraic independence of relations; c.f. Cor \ref{cor:indep}.
\end{remark}

The notion of positivity for $\tau$-functions considered here is different from the one considered in \cite{kodam2011kp,kodama2014kp} for (a certain class of) multi-soliton solutions of the KP-equation \eqref{KPs}, which corresponds to the simplest Pl\"ucker relation \eqref{KPd}. In the latter work solutions describing multi-solitons are described in terms of monomials of exponential functions in the flow parameters $t_1=p_1,t_2=p_2/2,t_3=p_3/3$ with positive coefficients (that come from the Pl\"ucker coordinates of finite Grassmannians). There also exist more general multi-soliton solutions which satisfy the entire KP-hierarchy (see e.g. \cite{miwa2000solitons} and \cite{kac2013bombay}), i.e. all of the Pl\"ucker relations \eqref{Plucker} and correspond to the $\tau$-functions considered here, but in general it is not expected that the expansions of these more general multi-soliton solutions (or the special class considered in \cite{kodamakodama2011kp}) into Schur functions gives {\em positive} or {\em non-negative} Pl\"ucker coordinates $\Delta_\lambda(W)$ for the corresponding point $W$ on the Sato-Segal-Wilson Grassmannian. To clarify this point, we briefly recall the multi-soliton solutions discussed in \cite{kodama2011kp,kodama2014kp}.

\subsection{Multi-Soliton Solutions and Finite Grassmannians}\label{S:multi-soliton}
There is a class of special multi-soliton solutions of the KP-equation, which are obtained from points on {\em finite} Grassmannians $\Gr_{m,n}$; see e.g. \cite{kodama_book}. This particular subset of solutions and their connection with the cluster algebra structure of the (finite) coordinate rings $\bC[\Gr_{m,n}]$ \cite{Scott} has been the subject of study in the works by Kodama and Williams \cite{kodama2011kp,kodama2014kp}. In order to avoid confusion and highlight the difference between the finite Pl\"ucker coordinates entering these special multi-soliton solutions as combinatorial data and their Pl\"ucker coordinates when viewed as points on the Sato-Segal-Wilson Grassmannian we briefly comment on these solutions; for details we refer to \cite{kodama2011kp,kodama2014kp} and \cite{kodama_book} as well as references therein. 

Fix some integer $N\ge 2$, some free parameters $\zeta=(\zeta_1,\ldots,\zeta_N)$ and a subspace $V\in\Gr_{m,n}$ with $m+n=N$. Then the special $N$-soliton solution associated to these data in \cite{kodama2011kp} is given by 
\begin{equation}\label{kodama_solitons}
\tau^{V,\zeta}(x,y,t)=\sum_{I\subset[N]}c_I(\zeta)\prod_{i\in I}e^{\zeta_ix+\zeta_i^2y+\zeta_i^3t},\quad
c_I(\zeta)=\Delta^{(m,n)}_I(V)\prod_{i,j\in I}(\zeta_i-\zeta_j)\,,
\end{equation}
where the sum runs over all $m$-subsets $I$ of $[N]=\{1,\ldots,N\}$ and $\Delta^{(m,n)}_I(V)$ are the Pl\"ucker coordinates of $V\in\Gr_{m,n}$. For generic choice of parameters it is not clear that these are solutions of the higher Pl\"ucker relations \eqref{Plucker} other than \eqref{KPd} and, thus, in general will not correspond to points on the Sato-Segal-Wilson Grassmannian (despite this we have labelled these solutions with the letter $\tau$ although they fall in general outside of our definition of a `$\tau$-function'). However, general multi-soliton solutions which satisfy all of the relations \eqref{Plucker} are known, see e.g. \cite{miwa2000solitons} and 
also \cite[Thm 2]{nakayashiki2018degeneration} for an extension of the soliton solutions from \cite{kodama2011kp} to the Sato Grassmannian as well as references therein. In order to obtain the Pl\"ucker coordinates of these multi-soliton $\tau$-functions when viewed as points $W\in\Gr$  on the Sato-Segal-Wilson Grassmannian, they need first to be rewritten in terms of Schur functions according to \eqref{tau}. This rewriting can be achieved by using the Cauchy identity (see e.g. \cite[Sec. 2.2]{kodama2021space} for such a discussion in the case of certain soliton solutions). 

From the expansion \eqref{tau} one sees that in general, i.e. for generic points $W\in\Gr$, one has infinitely many non-zero P\"ucker coordinates $\Delta_\lambda(W)$. The Pl\"ucker relations of the latter are described by the ind-cluster algebra $\iring$, while the finite number of relations satisfied by $\Delta^{(m,n)}_I(V)$ in the special solutions \eqref{kodama_solitons} of the KP-equation are described by the finite rank cluster algebra $\bC[\Gr_{m,n}]$. 

\bibliographystyle{alpha}

\bibliography{ind_cluster_bib}

\end{document}